\theoremstyle{definition}
\newtheorem{theorem}{Theorem}[section]
\newtheorem{proposition}[theorem]{Proposition}
\newtheorem{lemma}[theorem]{Lemma}
\newtheorem{corollary}[theorem]{Corollary}
\newtheorem{problem}[theorem]{Problem}
\newtheorem{conjecture}[theorem]{Conjecture}
\newtheorem{definition}[theorem]{Definition}
\newtheorem{example}[theorem]{Example}
\newtheorem{observation}[theorem]{Observation}
\newtheorem{definition/theorem}[theorem]{Definition/Theorem}
\theoremstyle{remark}
\newtheorem{remark}[theorem]{Remark}
\numberwithin{equation}{section}
\newlength\cellsize \setlength\cellsize{15\unitlength}
\newcommand\cellify[1]{\def\thearg{#1}\def\nothing{}%
\ifx\thearg\nothing
\vrule width0pt height\cellsize depth0pt\else
\hbox to 0pt{\usebox2\hss}\fi%
\vbox to 15\unitlength{
\vss
\hbox to 15\unitlength{\hss$#1$\hss}
\vss}}
\newcommand\tableau[1]{\vtop{\let\\=\cr
\setlength\baselineskip{-16000pt}
\setlength\lineskiplimit{16000pt}
\setlength\lineskip{0pt}
\halign{&\cellify{##}\cr#1\crcr}}}
\newcommand\expath[1]{%
\hbox to 0pt{\usebox3\hss}%
\vbox to 15\unitlength{
\vss
\hbox to 15\unitlength{\hss$#1$\hss}
\vss}}
\newcommand\bas[1]{\omit \vbox to \cellsize{ \vss \hbox to \cellsize{\hss$#1$\hss} \vss}}
\begin{document}

\title[A signed $e$-expansion of the chromatic quasisymmetric function]{A signed $e$-expansion of the chromatic quasisymmetric function}

\author{Foster Tom}
\thanks{Department of Mathematics, MIT}
\subjclass[2020]{Primary 05E05; Secondary 05E10, 05C15}
\keywords{chromatic quasisymmetric function, elementary symmetric function, natural unit interval graph, proper colouring, Shareshian--Wachs conjecture, Stanley--Stembridge conjecture}

\begin{abstract}
We prove a new signed elementary symmetric function expansion of the chromatic quasisymmetric function of any natural unit interval graph. We then use a sign-reversing involution to prove a new combinatorial formula for $K$-chains, which are graphs formed by joining cliques at single vertices. This formula immediately implies $e$-positivity and $e$-unimodality for $K$-chains. We also prove a version of our signed $e$-expansion for arbitrary graphs.
\end{abstract}

%We prove a new signed elementary symmetric function expansion of the chromatic quasisymmetric function of any natural unit interval graph. We then use a sign-reversing involution to prove a new combinatorial formula for K-chains, which are graphs formed by joining cliques at single vertices. This formula immediately implies $e$-positivity and $e$-unimodality for K-chains. We also prove a version of our signed $e$-expansion for arbitrary graphs.

\maketitle
%\tableofcontents
\vspace{65pt}
\section{Introduction}\label{section:introduction}

In 1995, Stanley \cite{chromsym} introduced the chromatic symmetric function $X_G(\bm x)$, a generalization of the chromatic polynomial of a graph $G$, and famously conjectured with Stembridge \cite{chromsym, stanstem} that $X_G(\bm x)$ is $e$-positive whenever $G$ is the incomparability graph of a $(\bm 3+\bm 1)$-free partially ordered set. Guay-Paquet \cite{stanstemreduction} showed that it suffices to prove $e$-positivity for the smaller class of natural unit interval graphs. Shareshian and Wachs \cite{chromposquasi} defined a further generalization called the chromatic quasisymmetric function $X_G(\bm x;q)$, showed that if $G$ is a natural unit interval graph, then $X_G(\bm x;q)$ is in fact symmetric; and conjectured that it is $e$-positive and $e$-unimodal for such graphs. This notorious problem has received considerable attention. Many authors proved $e$-positivity for certain families of graphs \cite{csfvertex, chrombounce3, chromstoe, chromqrook, trilad, lollilari, chromquasidi, eposclasses, lltunicellschur, eposclaw2k2free, stanstem211, cyclechordepos, somepos, chrominvkostka}, studied related positivity problems \cite{chromquasipower, chromnsymschurposets, chromesinks, chromincreasingforests, eposclawcon, chromposets, chromspostrees, chromposnsym, chrompos31, stanstemreduction, chromeschers, cylindric31}, and explored alternative generalizations of the chromatic symmetric function \cite{symincreasingforests, genchrom, tuttechromsymeq, kromatic, extendedchromsym, chromnsym, graphrelated}. The Stanley--Stembridge conjecture is also closely related to positivity of immanants of Jacobi--Trudi matrices \cite{stanstem}, the cohomology of regular semisimple Hessenberg varieties \cite{chromhesssplitting, dothessenberg, chromhessequi, hessenberghopf, stanstemhess, chromquasihessenberg}, and characters of Hecke algebras evaluated at Kazhdan--Lusztig basis elements \cite{chromhecke, chromcharacters}.\\

In this paper, we prove a new signed elementary symmetric function expansion of the chromatic quasisymmetric function of any natural unit interval graph, in terms of objects called \emph{forest triples}. We then use a sign-reversing involution on forest triples to prove a new combinatorial $e$-expansion for $K$-chains, which are graphs formed by joining a sequence of cliques at single vertices. \\

In Section \ref{section:background}, we introduce chromatic quasisymmetric functions and natural unit interval graphs. In Section \ref{section:foresttripleformula}, we prove our signed formula (Theorem \ref{thm:foresttriples})
\begin{equation}
X_G(\bm x;q)=\sum_{\mathcal F\in\text{FT}(G)}\text{sign}(\mathcal F)q^{\text{weight}(\mathcal F)}e_{\text{type}(\mathcal F)}.
\end{equation}
In Section \ref{section:kchains}, we use a sign-reversing involution on forest triples to prove the explicit combinatorial $e$-expansion (Corollary \ref{cor:kchainexplicit}) for a generalization of $K$-chains, namely
\begin{equation}\label{eq:charmbraceletformula}
X_{K_\gamma^\epsilon}(\bm x;q)=[\gamma_1-2]_q!\cdots[\gamma_\ell-2]_q!\sum_{\alpha\in A_\gamma^\epsilon}[\alpha_1]_q\prod_{i=2}^{\ell+1}q^{m_i}[|\alpha_i-(\gamma_{i-1}-\epsilon_{i-1}-1)|]_qe_{\text{sort}(\alpha)}.
\end{equation}
This formula implies that $X_{K_\gamma^\epsilon}(\bm x;q)$ is $e$-positive and $e$-unimodal (Corollary \ref{cor:kchainuni}). In Section \ref{section:further}, we propose some new directions of study for the Stanley--Stembridge conjecture and we prove a version of our forest triple formula for arbitrary graphs (Theorem \ref{thm:nbctriples}).

\section{Background}\label{section:background}

Let $G=([n],E)$ be a graph with vertex set $[n]=\{1,\ldots,n\}$. A \emph{colouring} of $G$ is a function $\kappa:[n]\to\mathbb N=\{1,2,3,\ldots\}$ and we say that $\kappa$ is \emph{proper} if $\kappa(i)\neq\kappa(j)$ whenever $\{i,j\}\in E$, in other words, adjacent vertices are assigned different positive integers, which we think of colours. We denote by $\text{asc}(\kappa)$ the number of \emph{ascents} of $\kappa$, which are pairs of vertices $(i,j)$ with $i<j$ and $\kappa(i)<\kappa(j)$. The \emph{chromatic quasisymmetric function} of $G$ is the formal power series in an infinite alphabet of variables $\bm x=(x_1,x_2,x_3,\ldots)$ and an additional variable $q$ given by \cite[Definition 1.2]{chromposquasi}

\begin{equation}
X_G(\bm x;q)=\sum_{\substack{\kappa:[n]\to\mathbb N\\\kappa \text{ proper}}}q^{\text{asc}(\kappa)}x_{\kappa(1)}x_{\kappa(2)}\cdots x_{\kappa(n)}.
\end{equation}

Unlike the \emph{chromatic symmetric function} $X_G(\bm x)=X_G(\bm x;1)$, which is always symmetric in the $x_i$ variables, $X_G(\bm x;q)$ is not symmetric in general. Nevertheless, if $G$ is a \emph{natural unit interval graph}, meaning that \begin{equation}\label{eq:uicondition}\text{ for all } 1\leq i<j<k\leq n\text{, if }\{i,k\}\in E,\text{ then }\{i,j\}\in E\text{ and }\{j,k\}\in E,\end{equation} then $X_G(\bm x;q)$ is symmetric in the $x_i$ variables \cite[Theorem 4.5]{chromposquasi} and palindromic in $q$ with center of symmetry $\frac{|E|}2$ \cite[Corollary 4.6]{chromposquasi}, meaning that $X_G(\bm x;q)=q^{|E|}X_G(\bm x;q^{-1})$. In this case, we can consider the expansion
\begin{equation}X_G(\bm x;q)=\sum_\mu c_\mu(q)e_\mu\end{equation} in terms of the \emph{elementary symmetric functions} $e_\mu$, which are indexed by integer partitions $\mu=\mu_1\cdots\mu_\ell$ and defined by \begin{equation}e_\mu=e_{\mu_1}\cdots e_{\mu_\ell},\text{ where }e_k=\sum_{i_1<\cdots<i_k}x_{i_1}\cdots x_{i_k}.\end{equation} We say that $X_G(\bm x;q)$ is \emph{$e$-positive} if every polynomial $c_\mu(q)$ has positive coefficients and we say that $X_G(\bm x;q)$ is \emph{$e$-unimodal} if every polynomial $c_\mu(q)$ is \emph{unimodal}, meaning that the coefficients form a weakly increasing sequence followed by a weakly decreasing sequence.

\begin{figure}\caption{\label{fig:chromsymexamplebowtie} The bowtie graph $G$ and the chromatic quasisymmetric function $X_G(\bm x;q)$}
\begin{tikzpicture}
\draw (1.5,0) node (1) {$G=$};
\filldraw (2.134,0.5) circle (3pt) node[align=center,above] (1){1};
\filldraw (2.134,-0.5) circle (3pt) node[align=center,below] (2){2};
\filldraw (3,0) circle (3pt) node[align=center,above] (3){3};
\filldraw (3.866,0.5) circle (3pt) node[align=center,above] (4){4};
\filldraw (3.866,-0.5) circle (3pt) node[align=center,below] (5){5};
\draw (2.134,0.5)--(2.134,-0.5)--(3,0)--(2.134,0.5) (3.866,0.5)--(3.866,-0.5)--(3,0)--(3.866,0.5);
\end{tikzpicture}
\begin{align}\label{eq:bowtiee}
X_G(\bm x;q)=&(q^2+2q^3+q^4)e_{32}+(q+3q^2+4q^3+3q^4+q^5)e_{41}\\&\nonumber+(1+3q+4q^2+4q^3+4q^4+3q^5+q^6)e_5
\end{align}
\end{figure}
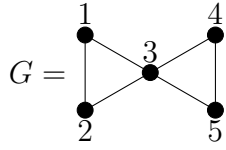

\begin{example}
The bowtie graph $G$ and the chromatic quasisymmetric function $X_G(\bm x;q)$ are shown in Figure \ref{fig:chromsymexamplebowtie}. We see that $G$ is a natural unit interval graph because it satisfies \eqref{eq:uicondition} and we see from \eqref{eq:bowtiee} that $X_G(\bm x;q)$ is indeed palindromic in $q$ with center of symmetry $\frac{|E(G)|}2=\frac 62=3$, and it is $e$-positive and $e$-unimodal. 
\end{example}

\begin{example}\label{ex:complete}
For the complete graph $K_n$ on $n$ vertices, a proper colouring $\kappa$ must use $n$ distinct colours and can be associated with a permutation $\sigma\in S_n$ by setting $\sigma(i)=j$ if $\kappa(i)$ is the $j$-th smallest colour used. Therefore, we have
\begin{equation}\label{eq:complete}
X_{K_n}(\bm x;q)=\left(\sum_{\sigma\in S_n}q^{|\{(i,j): \ i<j, \ \sigma_i<\sigma_j\}|}\right)e_n.
\end{equation}Then defining the \emph{$q$-integer} and the \emph{$q$-factorial}
\begin{equation}
[k]_q=1+q+q^2+\cdots+q^{k-1}=\frac{q^k-1}{q-1}\text{ and }[n]_q!=[n]_q[n-1]_q\cdots[2]_q[1]_q,
\end{equation}we have by a standard induction argument that the sum in \eqref{eq:complete} is $[n]_q!$, and so
\begin{equation}
X_{K_n}=[n]_q!e_n.
\end{equation}
Therefore $X_{K_n}(\bm x;q)$ is $e$-positive and $e$-unimodal.
\end{example}

Stanley was interested in characterizing $e$-positivity of $X_G(\bm x)$ in order to generalize positivity results of immanants of Jacobi--Trudi matrices \cite{stanstem}. The celebrated Stanley--Stembridge conjecture \cite[Conjecture 5.1]{chromsym}, \cite[Conjecture 5.5]{stanstem} asserts that we have $e$-positivity for incomparability graphs of $(\bm 3+\bm 1)$-free partially ordered sets and Guay-Paquet \cite[Theorem 5.1]{stanstemreduction} showed that it suffices to prove this for the smaller class of natural unit interval graphs. The following quasisymmetric refinement by Shareshian and Wachs has seen connections to Hessenberg varieties and Hecke algebras.

\begin{conjecture}\label{conj:ss}
\cite[Conjecture 1.3]{chromposquasi} If $G$ is a natural unit interval graph, then $X_G(\bm x;q)$ is $e$-positive and $e$-unimodal.
\end{conjecture}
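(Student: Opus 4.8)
The plan is to build on the signed forest triple formula of Theorem~\ref{thm:foresttriples} and attempt to cancel every negative term by a single sign-reversing involution, generalizing the argument used for $K$-chains in Section~\ref{section:kchains}. Concretely, one seeks an involution $\iota\colon\mathrm{FT}(G)\to\mathrm{FT}(G)$, defined for every natural unit interval graph $G$, that preserves $\mathrm{type}(\mathcal F)$ and $\mathrm{weight}(\mathcal F)$, reverses $\mathrm{sign}(\mathcal F)$ on every non-fixed forest triple, and fixes only forest triples of positive sign. Then pairing each non-fixed $\mathcal F$ with $\iota(\mathcal F)$ cancels those terms in Theorem~\ref{thm:foresttriples}, so $X_G(\bm x;q)=\sum_\mu c_\mu(q)e_\mu$ where each $c_\mu(q)$ is a nonnegative integer combination of powers $q^{\mathrm{weight}(\mathcal F)}$, i.e.\ $e$-positivity. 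The first task is thus to decide which forest triples on a general $G$ should be the fixed points; the $K$-chain case of Corollary~\ref{cor:kchainexplicit} suggests what to expect -- the survivors should be indexed by data analogous to the compositions $\alpha\in A_\gamma^\epsilon$ of \eqref{eq:charmbraceletformula} -- so I would first look for a canonical normal form for forest triples of an arbitrary $G$ and define $\iota$ as ``straighten toward the normal form''.

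For $e$-unimodality I would not argue separately but read it off the same expansion. Shareshian and Wachs proved $X_G(\bm x;q)$ is palindromic in $q$ about $\tfrac{|E|}{2}$, and the natural refinement is that each $c_\mu(q)$ is itself palindromic, about a center depending only on $\mu$ and $G$; unimodality then follows if one writes $c_\mu(q)$ as a sum of atoms $q^a[b-a+1]_q$ all sharing that center, since each such atom is palindromic and unimodal and a sum of palindromic unimodal polynomials with a common center is again palindromic and unimodal. Formula \eqref{eq:charmbraceletformula} is exactly of this shape -- a product of $q$-factorials $[\gamma_i-2]_q!$ times $q$-shifted $q$-integers $q^{m_i}[\,|\alpha_i-(\gamma_{i-1}-\epsilon_{i-1}-1)|\,]_q$ -- so the goal is to show that the $q$-generating function of each class of $\iota$-fixed points of a given type factors as a product of $q$-shifted $q$-integers with the correct center. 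Checking that the accumulated shifts land on $\tfrac{|E|}{2}$ after summing is where the Shareshian--Wachs palindromicity would be invoked, and it is the step I expect to be delicate for non-$K$-chain graphs.

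To actually construct $\iota$ I would induct on the natural unit interval structure: such a graph is encoded by the weakly increasing sequence $b_1\le\cdots\le b_n$ with $\{i,j\}\in E\iff i<j\le b_i$, and vertex $n$ is adjacent exactly to a suffix $\{k,\ldots,n-1\}$, so deleting it leaves a natural unit interval graph on $[n-1]$; one would match $\mathrm{FT}(G)$ against $\mathrm{FT}(G-n)$ decorated by how $n$ attaches, set $\iota$ to ``apply the inductive involution away from $n$'', and handle separately the triples in which $n$ must move. When $G$ has a cut vertex this recursion specializes to the clique-gluing decomposition of $K$-chains, and Guay-Paquet's modular/Hopf-algebraic relations offer an alternative source of the recursion. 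The main obstacle, though, is precisely the core of the Stanley--Stembridge and Shareshian--Wachs conjectures: cliques glued at cut vertices decompose cleanly and the $K$-chain involution can be written in closed form, but a general natural unit interval graph has overlapping cliques and $\mathrm{FT}(G)$ does not factor over any obvious decomposition, so there is no known global rule for $\iota$ -- producing one, even before the center bookkeeping that unimodality needs, would amount to settling a long-standing open problem.
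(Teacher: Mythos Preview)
This statement is a \emph{conjecture} in the paper, not a theorem: the paper does not prove it, and indeed Conjecture~\ref{conj:ss} (the Shareshian--Wachs conjecture) remains open. What the paper establishes is the signed formula (Theorem~\ref{thm:foresttriples}) and the special case of almost-$K$-chains (Corollary~\ref{cor:kchainuni}); the general case is then listed under ``Further directions'' as Problem~5.1.

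Your write-up is therefore not a proof but a research plan, and you say so yourself in the final sentence: constructing the global involution $\iota$ ``would amount to settling a long-standing open problem.'' That is an accurate assessment. The concrete gap is exactly the one you identify: the inductive step of deleting vertex $n$ does not interact cleanly with the forest-triple structure when $n$ is not a cut vertex, because removing $n$ can change which subtrees are decreasing and how $\mathrm{inv}_G$ decomposes, so there is no evident way to lift an involution on $\mathrm{FT}(G-n)$ to one on $\mathrm{FT}(G)$. The $K$-chain argument in Section~\ref{section:kchains} works precisely because each step glues along a cut vertex, allowing the restriction map $\mathcal F\mapsto\mathcal F|_{\ge a}$ of Lemma~\ref{lem:treelistofkchain} to split the problem; for overlapping cliques no analogous restriction is available. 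Your unimodality sketch is also contingent on this missing piece: without an explicit description of the fixed points you cannot verify that each $c_\mu(q)$ factors into $q$-shifted $q$-integers with a common center, and palindromicity of $X_G(\bm x;q)$ alone does not imply palindromicity of the individual $c_\mu(q)$.
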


\begin{remark}
An abstract graph is called a \emph{unit interval graph} if its vertices can be labelled $1$ through $n$ so that \eqref{eq:uicondition} holds. The name comes from an equivalent characterization as a graph whose vertices are unit intervals of the real line and whose edges join intersecting intervals. It is convenient to consider \emph{natural unit interval graphs}, where the adjective ``natural'' means that such a labelling is given. The chromatic quasisymmetric function does not depend on the choice of labelling, as long as \eqref{eq:uicondition} is satisfied \cite[Proposition 4.2]{chromposquasi}.
\end{remark}

\begin{remark} \label{rem:reverse} Let $G=([n],E)$ be a natural unit interval graph and consider the graph $G'=([n],\{\{n+1-j,n+1-i\}: \ \{i,j\}\in E\})$ given by reversing the vertex labels. If we restrict to $n$ variables and associate a proper colouring $\kappa:[n]\to[n]$ of $G$ to $\kappa':[n]\to[n]$ by setting $\kappa'(i)=n+1-\kappa(n+1-i)$, it follows that $X_G(x_1,\ldots,x_n;q)=X_{G'}(x_n,\ldots,x_1;q)$. Then we have $X_G(\bm x;q)=X_{G'}(\bm x;q)$ by symmetry in the $x_i$ variables. This proves palindromicity in $q$ and shows that we could have equivalently defined $X_G(\bm x;q)$ using descents instead of ascents. Similarly, we can switch the roles of $i$ and $(n+1-i)$ in everything that follows, for example, we can use increasing trees instead of decreasing trees.
\end{remark}

\section{Signed formula}\label{section:foresttripleformula}

In this section, we prove a signed combinatorial $e$-expansion of the chromatic quasisymmetric function of any natural unit interval graph $G=([n],E)$. This formula has the advantage of allowing us to calculate a particular term without calculating the entire chromatic quasisymmetric function. We will walk through an example in Figure \ref{fig:ftexample}.

\begin{definition}
A subtree $T$ of $G$ is \emph{decreasing} if for every vertex $v$ of $T$, the unique path in $T$ from the largest vertex of $T$ to $v$ is decreasing. Equivalently, every vertex of $T$ except the largest has exactly one larger neighbour. \\

Given a decreasing subtree $T$ of $G$, we define a permutation of $V(T)$, denoted $\text{list}(T)$, by reading the vertices of $T$ starting from the smallest, and at each step, reading the smallest unread vertex of $T$ that is adjacent to a read vertex of $T$. An example is given in Figure \ref{fig:treelists}.
\end{definition}

\begin{remark}
These trees and permutations also arise in work of Abreu and Nigro \cite{symincreasingforests}, Athanasiadis \cite{chromquasipower}, and D'Adderio, Riccardi, and Siconolfi \cite{chromincreasingforests} in their study of chromatic quasisymmetric functions.
\end{remark}

\begin{figure} \caption{\label{fig:treelists} A decreasing tree $T$ and the permutation $\text{list}(T)$}
$$\begin{tikzpicture}
\draw (-1,0) node () {$T=$};
\filldraw (0,0) circle (3pt) node[align=center, above] (1){1};
\filldraw (1,0) circle (3pt) node[align=center, above] (5){5};
\filldraw (1.5,-0.866) circle (3pt) node[align=center, left] (3){3};
\filldraw (2,0) circle (3pt) node[align=center,above] (6){6};
\filldraw (3,0) circle (3pt) node[align=center,above] (7){7};
\filldraw (4,0) circle (3pt) node[align=center,above] (8){8};
\filldraw (2.5,-0.866) circle (3pt) node[align=center, left] (4){4};
\filldraw (2,-1.73) circle (3pt) node[align=center,below] (2){2};
\draw (0,0) -- (4,0) (2,0) -- (1.5,-0.866) (2,0) -- (2.5,-0.866) (1.5,-0.866) -- (2,-1.73);
\draw (8,0) node () {$\text{list}(T)=15632478$};
\end{tikzpicture}$$
\end{figure}

\begin{definition}\label{def:ft}
A \emph{tree triple} of $G$ is an object $\mathcal T=(T,\alpha,r)$ consisting of the following data.
\begin{itemize}
\item $T$ is a decreasing subtree of $G$.
\item $\alpha=\alpha_1\cdots\alpha_\ell$ is an integer composition with size $|\alpha|=\sum_{i=1}^\ell\alpha_i=|V(T)|$.
\item $r$ is a positive integer with $1\leq r\leq \alpha_1$, the first part of $\alpha$. 
\end{itemize}
A \emph{forest triple} of $G$ is a sequence of tree triples $\mathcal F=(\mathcal T_i=(T_i,\alpha^{(i)},r_i))_{i=1}^m$ such that each vertex of $G$ is in exactly one tree $T_i$ and we have
\begin{equation}
\min(V(T_1))<\min(V(T_2))<\cdots<\min(V(T_m)).
\end{equation} The \emph{type} of a forest triple $\mathcal F$ is the integer partition \begin{equation}\text{type}(\mathcal F)=\text{sort}(\alpha^{(1)}\cdots\alpha^{(m)}),\end{equation} 
obtained by concatenating the compositions and sorting in decreasing order. The \emph{sign} of $\mathcal F$ is the integer \begin{equation}\text{sign}(\mathcal F)=(-1)^{\sum_{i=1}^m(\ell(\alpha^{(i)})-1)}=(-1)^{\ell(\text{type}(\mathcal F))-m}.\end{equation} Letting $\sigma=\text{list}(T_1)\cdots\text{list}(T_m)$ be the concatenation of the permutations of the $V(T_i)$, we define the \emph{weight} of $\mathcal F$ to be the integer
\begin{align}
\text{weight}(\mathcal F)&=\text{inv}_G(\sigma)+\sum_{i=1}^m(r_i-1),\text{ where }\\\nonumber\text{inv}_G(\sigma)&=|\{(i,j): \ i<j, \ \sigma_i>\sigma_j, \ \{\sigma_j,\sigma_i\}\in E(G)\}|\end{align}
is the number of \emph{$G$-inversions} of the permutation $\sigma\in S_n$. We denote by $\text{FT}(G)$ the set of forest triples of $G$ and by $\text{FT}_\mu(G)$ the set of forest triples of $G$ of type $\mu$.
\end{definition}

We now state our signed combinatorial formula.

\begin{theorem}\label{thm:foresttriples}
The chromatic quasisymmetric function $X_G(\bm x;q)$ of a natural unit interval graph $G$ satisfies
\begin{equation}\label{eq:ftformula}
X_G(\bm x;q)=\sum_{\mathcal F\in\text{FT}(G)}\text{sign}(\mathcal F)q^{\text{weight}(\mathcal F)}e_{\text{type}(\mathcal F)}=\sum_\mu\left(\sum_{\mathcal F\in\text{FT}_\mu(G)}\text{sign}(\mathcal F)q^{\text{weight}(\mathcal F)}\right)e_\mu.
\end{equation}
\end{theorem}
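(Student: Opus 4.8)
The plan is to start from the expansion of $X_G(\bm x;q)$ in the power sum or monomial basis and push toward the elementary basis, but a cleaner route is to work directly with proper colourings and perform an inclusion–exclusion on the structure of a colouring. Recall that a proper colouring $\kappa$ partitions $[n]$ into colour classes (independent sets); grouping colourings by the underlying set partition and the relative order of the colour values is the standard first move in $e$-expansion arguments. The key object $e_\mu$ arises as the generating function for ways of ordering blocks, so I would aim to match each forest triple $\mathcal F$ with a signed contribution to this generating function.

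\textbf{Step 1: reduce to a single clique / composition building block.} The $q$-factorial identity $X_{K_n}=[n]_q! \, e_n$ from Example \ref{ex:complete} should be the atom. I would write a forest triple's tree $T_i$ together with its composition $\alpha^{(i)}$ as recording a sequence of ``clique-like'' blocks sitting along $T_i$, with $r_i$ contributing the extra $q^{r_i-1}$ factor exactly as one index in a $q$-factorial expansion. So the first real step is to establish a local identity: for a single decreasing subtree $T$ with $\text{list}(T)=\sigma$, the sum over compositions $\alpha$ of $|V(T)|$ and $1\le r\le\alpha_1$ of $\text{sign}\cdot q^{\text{inv}_G(\sigma)+(r-1)}e_\alpha$ collapses, via a telescoping/sign-reversing cancellation on refinements of compositions, to the ``correct'' contribution of that tree.

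\textbf{Step 2: assemble trees into forests and match with colourings.} With the per-tree identity in hand, the forest condition $\min(V(T_1))<\cdots<\min(V(T_m))$ is precisely the normalization that makes the concatenation $\sigma=\text{list}(T_1)\cdots\text{list}(T_m)$ well-defined and that makes $\text{inv}_G(\sigma)$ additive-plus-cross-terms in a way that reproduces $\text{asc}(\kappa)$. I would set up a weight-preserving, sign-respecting correspondence between (forest triples with a choice witnessing each $e_{\alpha^{(i)}}$) and proper colourings $\kappa$, so that the $q$-statistic $\text{weight}(\mathcal F)$ matches $\text{asc}(\kappa)$ and the monomial $x_{\kappa(1)}\cdots x_{\kappa(n)}$ is produced by expanding $e_{\text{type}(\mathcal F)}$. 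Concretely, expand each $e_k=\sum_{i_1<\cdots<i_k}x_{i_1}\cdots x_{i_k}$ and check that the resulting sum over $\text{FT}(G)$, after the Step 1 cancellation, is exactly $\sum_{\kappa\text{ proper}}q^{\text{asc}(\kappa)}\bm x^{\kappa}$.

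\textbf{The main obstacle} I anticipate is controlling the interaction between the tree structure and the graph edges not lying in any $T_i$: the statistic $\text{inv}_G(\sigma)$ counts $G$-inversions along the full concatenated word, including edges between vertices in different trees and non-tree edges within a single block, and showing this aligns with $\text{asc}(\kappa)$ under the correspondence requires the natural unit interval hypothesis \eqref{eq:uicondition} in an essential way (so that ``being adjacent'' is governed by the interval order and the reading order of $\text{list}(T)$ is compatible with it). A secondary difficulty is verifying that the sign-reversing cancellation in Step 1 has no fixed points except the intended surviving terms, i.e. that refining a composition always flips the sign and pairs up cleanly; I would handle this by choosing a canonical first place where a composition can be merged/split and using it to define an involution, then checking it respects $\text{weight}$ and the tree data.

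\textbf{Bookkeeping alternative.} If the direct colouring bijection proves unwieldy, a fallback is to prove \eqref{eq:ftformula} by verifying it agrees with a known expansion—e.g. Shareshian--Wachs's expansion of $X_G(\bm x;q)$ in the power sum basis, or the $p$-to-$e$ transition—by a generating-function manipulation over decreasing forests. This trades the combinatorial bijection for an algebraic identity check, but still needs Step 1's composition cancellation to land in the $e$-basis with the right $q$-weights.
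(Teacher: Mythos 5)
Your proposed route is genuinely different from the paper's, and it has a concrete gap. The paper does not match forest triples to proper colourings and does not go through the power sum basis. Instead it routes through LLT polynomials: it uses the Carlsson--Mellit plethystic relation $X_G(\bm x;q)=\text{LLT}_G[(q-1)\bm x]/(q-1)^n$ (Proposition \ref{prop:plethysmrelation}), the Alexandersson--Sulzgruber $e$-expansion $\text{LLT}_G(\bm x;q+1)=\sum_{\theta\subseteq E(G)}q^{|\theta|}e_{\lambda(\theta)}$ (Theorem \ref{thm:lltcombe}), and the plethystic identity $\frac{e_n[(q-1)\bm x]}{q-1}=\sum_{\alpha\vDash n}(-1)^{\ell(\alpha)-1}[\alpha_1]_qe_{\text{sort}(\alpha)}$ (Lemma \ref{lem:plethysmidentities}(4)); the tree lists and $\text{inv}_G$ come in only to count the subsets $\theta$ with a prescribed partition $\pi(\theta)$ (Lemmas \ref{lem:treelistbijection} and \ref{lem:sumovertreelists}).

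Your Step 1 is misconceived as stated. You propose that, for a single tree $T$, the sum over compositions $\alpha\vDash|V(T)|$ and $1\le r\le\alpha_1$ of $\text{sign}\cdot q^{\text{inv}_G(\sigma)+(r-1)}e_{\text{sort}(\alpha)}$ ``collapses, via a telescoping/sign-reversing cancellation on refinements of compositions, to the correct contribution of that tree.'' But this inner sum is simply $q^{\text{inv}_G(\sigma)}\sum_\alpha(-1)^{\ell(\alpha)-1}[\alpha_1]_q e_{\text{sort}(\alpha)}$, which is a genuinely signed symmetric function (already for $n=3$ the coefficient of $e_{21}$ is $-(2+q)$); nothing cancels within a single tree. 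The cancellation that makes the full formula work happens \emph{across} forest triples with different numbers of trees (a one-tree forest with composition $(a,b)$ against a two-tree forest with compositions $(a)$ and $(b)$), which is what the sign-reversing involutions in Section \ref{section:kchains} exploit, and is a different mechanism from the one you describe. What the single-tree sum \emph{is} equal to is $q^{\text{inv}_G(\sigma)}e_{|V(T)|}[(q-1)\bm x]/(q-1)$, but identifying this requires exactly the plethystic identity you have not invoked.

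Your fallback (power sum to elementary) also has a problem in the $q$-refined setting. The classical transition $p_n=\sum_{\alpha\vDash n}(-1)^{n-\ell(\alpha)}\alpha_1 e_{\text{sort}(\alpha)}$ (Lemma \ref{lem:ptoe}) produces the factor $\alpha_1$, not $[\alpha_1]_q$; this route works at $q=1$ (and indeed the paper uses it that way for the arbitrary-graph version, Theorem \ref{thm:nbctriples}), but the $[\alpha_1]_q$ in the forest triple weight does not arise from a $q$-independent basis change applied to a power sum expansion. The paper gets $[\alpha_1]_q$ from the plethysm $e_n[(q-1)\bm x]$, so you would need to rediscover the LLT/plethysm relation or some equivalent in order to close your argument, and neither your main route (a direct signed bijection to colourings, which amounts to reproving the LLT $e$-expansion and the plethystic link from scratch) nor the fallback supplies that ingredient.
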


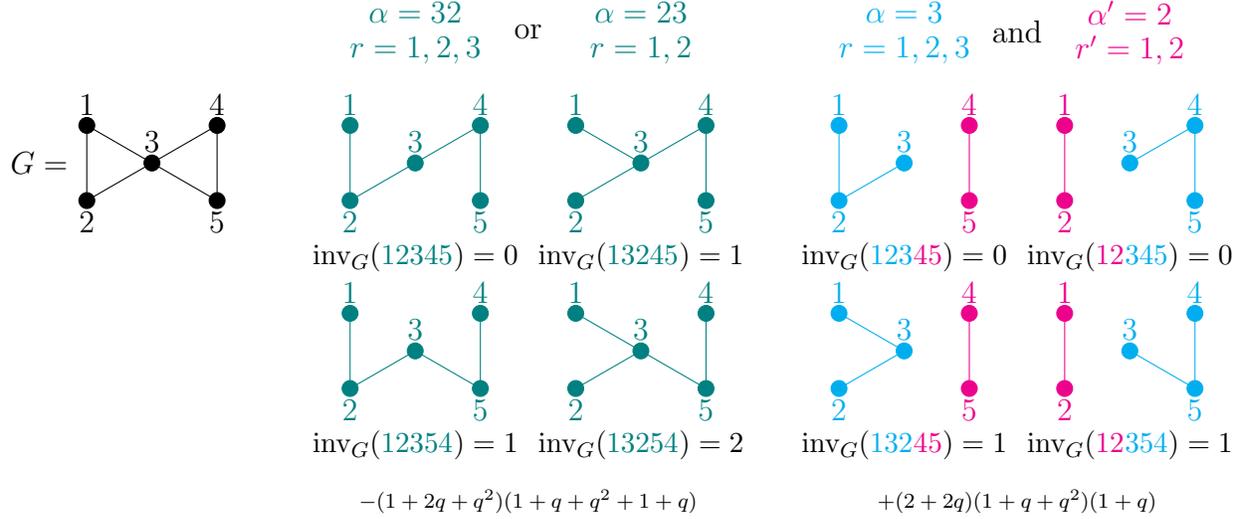
\begin{figure}\caption{\label{fig:ftexample} The bowtie graph $G$ and the forest triples of $G$ of type $32$}
\begin{tikzpicture}
\draw (3,0) node (1) {$G=$};
\filldraw (3.634,0.5) circle (3pt) node[align=center,above] (1){1};
\filldraw (3.634,-0.5) circle (3pt) node[align=center,below] (2){2};
\filldraw (4.5,0) circle (3pt) node[align=center,above] (3){3};
\filldraw (5.366,0.5) circle (3pt) node[align=center,above] (4){4};
\filldraw (5.366,-0.5) circle (3pt) node[align=center,below] (5){5};
\draw (3.634,0.5)--(3.634,-0.5)--(4.5,0)--(3.634,0.5) (5.366,0.5)--(5.366,-0.5)--(4.5,0)--(5.366,0.5);

\draw (8,2) node [color=teal] (){$\alpha=32$};
\draw (8,1.5) node [color=teal] (){$r=1,2,3$};
\draw (9.5,1.75) node [](){$\text{or}$};
\draw (11,2) node [color=teal] (){$\alpha=23$};
\draw (11,1.5) node [color=teal] (){$r=1,2$};
\draw (9.5,-4.5) node [](){\tiny{$-(1+2q+q^2)(1+q+q^2+1+q)$}};

\filldraw [color=teal](7.134,0.5) circle (3pt) node[align=center,above,color=teal] (1){1};
\filldraw [color=teal](7.134,-0.5) circle (3pt) node[align=center,below,color=teal] (2){2};
\filldraw [color=teal](8,0) circle (3pt) node[align=center,above,color=teal] (3){3};
\filldraw [color=teal](8.866,0.5) circle (3pt) node[align=center,above,color=teal] (4){4};
\filldraw [color=teal](8.866,-0.5) circle (3pt) node[align=center,below,color=teal] (5){5};
\draw [color=teal](7.134,0.5)--(7.134,-0.5)--(8,0) (8.866,0.5)--(8.866,-0.5) (8,0)--(8.866,0.5);
\draw (8,-1.25) node (){\small{$\text{inv}_G(\textcolor{teal}{12345})=0$}};

\filldraw [color=teal](7.134,-2) circle (3pt) node[align=center,above,color=teal] (1){1};
\filldraw [color=teal](7.134,-3) circle (3pt) node[align=center,below,color=teal] (2){2};
\filldraw [color=teal](8,-2.5) circle (3pt) node[align=center,above,color=teal] (3){3};
\filldraw [color=teal](8.866,-2) circle (3pt) node[align=center,above,color=teal] (4){4};
\filldraw [color=teal](8.866,-3) circle (3pt) node[align=center,below,color=teal] (5){5};
\draw [color=teal](7.134,-2)--(7.134,-3)--(8,-2.5) (8.866,-2)--(8.866,-3) (8,-2.5)--(8.866,-3);
\draw (8,-3.75) node (){\small{$\text{inv}_G(\textcolor{teal}{12354})=1$}};

\filldraw [color=teal](10.134,0.5) circle (3pt) node[align=center,above,color=teal] (1){1};
\filldraw [color=teal](10.134,-0.5) circle (3pt) node[align=center,below,color=teal] (2){2};
\filldraw [color=teal](11,0) circle (3pt) node[align=center,above,color=teal] (3){3};
\filldraw [color=teal](11.866,0.5) circle (3pt) node[align=center,above,color=teal] (4){4};
\filldraw [color=teal](11.866,-0.5) circle (3pt) node[align=center,below,color=teal] (5){5};
\draw [color=teal](10.134,0.5)--(11,0)--(10.134,-0.5) (11.866,0.5)--(11.866,-0.5) (11,0)--(11.866,0.5);
\draw (11,-1.25) node (){\small{$\text{inv}_G(\textcolor{teal}{13245})=1$}};

\filldraw [color=teal](10.134,-2) circle (3pt) node[align=center,above,color=teal] (1){1};
\filldraw [color=teal](10.134,-3) circle (3pt) node[align=center,below,color=teal] (2){2};
\filldraw [color=teal](11,-2.5) circle (3pt) node[align=center,above,color=teal] (3){3};
\filldraw [color=teal](11.866,-2) circle (3pt) node[align=center,above,color=teal] (4){4};
\filldraw [color=teal](11.866,-3) circle (3pt) node[align=center,below,color=teal] (5){5};
\draw [color=teal] (10.134,-2)--(11,-2.5)--(10.134,-3) (11.866,-2)--(11.866,-3) (11,-2.5)--(11.866,-3);
\draw (11,-3.75) node (){\small{$\text{inv}_G(\textcolor{teal}{13254})=2$}};

\draw (14.5,2) node [color=cyan] (){$\alpha=3$};
\draw (17.5,2) node [color=magenta] (){$\alpha'=2$};
\draw (16,1.75) node [](){$\text{and}$};
\draw (14.5,1.5) node [color=cyan] (){$r=1,2,3$};
\draw (17.5,1.5) node [color=magenta] (){$r'=1,2$};
\draw (16,-4.5) node [](){\tiny{$+(2+2q)(1+q+q^2)(1+q)$}};

\filldraw [color=cyan](13.634,0.5) circle (3pt) node[align=center,above,color=cyan] (1){1};
\filldraw [color=cyan](13.634,-0.5) circle (3pt) node[align=center,below,color=cyan] (2){2};
\filldraw [color=cyan](14.5,0) circle (3pt) node[align=center,above,color=cyan] (3){3};
\filldraw [color=magenta](15.366,0.5) circle (3pt) node[align=center,above,color=magenta] (4){4};
\filldraw [color=magenta](15.366,-0.5) circle (3pt) node[align=center,below,color=magenta] (5){5};
\draw [color=cyan](13.634,0.5)--(13.634,-0.5)--(14.5,0); \draw [color=magenta](15.366,0.5)--(15.366,-0.5);
\draw (14.5,-1.25) node (){\small{$\text{inv}_G(\textcolor{cyan}{123}\textcolor{magenta}{45})=0$}};

\filldraw [color=cyan](13.634,-2) circle (3pt) node[align=center,above,color=cyan] (1){1};
\filldraw [color=cyan](13.634,-3) circle (3pt) node[align=center,below,color=cyan] (2){2};
\filldraw [color=cyan](14.5,-2.5) circle (3pt) node[align=center,above,color=cyan] (3){3};
\filldraw [color=magenta](15.366,-2) circle (3pt) node[align=center,above,color=magenta] (4){4};
\filldraw [color=magenta](15.366,-3) circle (3pt) node[align=center,below,color=magenta] (5){5};
\draw [color=cyan](13.634,-2)--(14.5,-2.5)--(13.634,-3); \draw [color=magenta](15.366,-2)--(15.366,-3);
\draw (14.5,-3.75) node (){\small{$\text{inv}_G(\textcolor{cyan}{132}\textcolor{magenta}{45})=1$}};

\filldraw [color=magenta](16.634,0.5) circle (3pt) node[align=center,above,color=magenta] (1){1};
\filldraw [color=magenta](16.634,-0.5) circle (3pt) node[align=center,below,color=magenta] (2){2};
\filldraw [color=cyan](17.5,0) circle (3pt) node[align=center,above,color=cyan] (3){3};
\filldraw [color=cyan](18.366,0.5) circle (3pt) node[align=center,above,color=cyan] (4){4};
\filldraw [color=cyan](18.366,-0.5) circle (3pt) node[align=center,below,color=cyan] (5){5};
\draw [color=magenta](16.634,-0.5)--(16.634,0.5); 
\draw [color=cyan](17.5,0)--(18.366,0.5)--(18.366,-0.5);
\draw (17.5,-1.25) node (){\small{$\text{inv}_G(\textcolor{magenta}{12}\textcolor{cyan}{345})=0$}};

\filldraw [color=magenta](16.634,-2) circle (3pt) node[align=center,above,color=magenta] (1){1};
\filldraw [color=magenta](16.634,-3) circle (3pt) node[align=center,below,color=magenta] (2){2};
\filldraw [color=cyan](17.5,-2.5) circle (3pt) node[align=center,above,color=cyan] (3){3};
\filldraw [color=cyan](18.366,-2) circle (3pt) node[align=center,above,color=cyan] (4){4};
\filldraw [color=cyan](18.366,-3) circle (3pt) node[align=center,below,color=cyan] (5){5};
\draw [color=magenta](16.634,-3)--(16.634,-2); 
\draw [color=cyan](17.5,-2.5)--(18.366,-3)--(18.366,-2);
\draw (17.5,-3.75) node (){\small{$\text{inv}_G(\textcolor{magenta}{12}\textcolor{cyan}{354})=1$}};
\end{tikzpicture}
\end{figure}

\begin{example} Let us calculate the coefficient of $e_{32}$ in the chromatic quasisymmetric function $X_G(\bm x;q)$ for the bowtie graph $G$ in Figure \ref{fig:ftexample}. Forest triples $\mathcal F\in\text{FT}_{32}(G)$ come in one of two flavours.\\

We could have a single tree triple $\mathcal T=(T,\alpha,r)$, where $T$ is a decreasing spanning tree of $G$ and $\alpha$ is either $32$ or $23$. The four possibilities of $T$ are given in Figure \ref{fig:ftexample} along with $\text{inv}_G(\sigma)$. We can independently choose either $\alpha=32$ and $r=1$, $2$, or $3$, or $\alpha=23$ and $r=1$ or $2$, and we have $\text{sign}(\mathcal F)=(-1)^{2-1}=-1$, so these forest triples contribute 
\begin{equation}\label{eq:bowtieeneg}-(1+2q+q^2)(1+q+q^2+1+q)e_{32}=-(1+q)^2(2+2q+q^2)e_{32}.\end{equation} Alternatively, $\mathcal F$ can consist of two tree triples $\mathcal T=(T,\alpha,r)$ and $\mathcal T'=(T',\alpha',r')$ with trees of sizes $3$ and $2$ and compositions the single parts $3$ and $2$. The four possibilities of $(T,T')$ are given in Figure \ref{fig:ftexample} along with $\text{inv}_G(\sigma)$. We can independently choose $r$ and $r'$, and we have $\text{sign}(\mathcal F)=(-1)^{(1-1)+(1-1)}=+1$, so these forest triples contribute
\begin{equation}\label{eq:bowtieepos}
+(2+2q)(1+q+q^2)(1+q)e_{32}=+(1+q)^2(2+2q+2q^2)e_{32}.
\end{equation} Therefore, by summing \eqref{eq:bowtieeneg} and \eqref{eq:bowtieepos}, the coefficient of $e_{32}$ in $X_G(\bm x;q)$ is $q^2(1+q)^2$, as we saw in \eqref{eq:bowtiee}.
\end{example}

\begin{example}\label{ex:coefn}
Forest triples of $G$ of type $n$ consist of a single tree triple $\mathcal T=(T,\alpha,r)$, where $T$ is a decreasing spanning tree of $G$, $\alpha$ consists of the single part $n$, and $1\leq r\leq n$. Therefore, the coefficient of $e_n$ in $X_G(\bm x;q)$ is 
\begin{equation}\label{eq:coefen}
c_n(q)=[n]_q\sum_{T\text{ dec. sp. tree}}q^{\text{inv}_G(\text{list}(T))}.
\end{equation}
We will show in Lemma \ref{lem:sumovertreelists} that \eqref{eq:coefen} is equal to $[n]_q[b_2]_q[b_3]_q\cdots[b_n]_q$, where $b_i$ is the number of smaller neighbours of vertex $i$ in $G$. For example, the coefficient of $e_5$ in $X_G(\bm x;q)$ for the bowtie graph $G$ is $[5]_q[2]_q[2]_q$, as we saw in \eqref{eq:bowtiee}. This recovers the result \cite[Corollary 7.2]{chromposquasi}.
\end{example}

Before we prove Theorem \ref{thm:foresttriples}, we will use the example of paths to show how finding a \emph{sign-reversing involution} $\varphi$ on the set $\text{FT}(G)$ of forest triples of $G$ can be used to prove that $X_G(\bm x;q)$ is $e$-positive and give an explicit formula. The following result is well-known \cite[Section 5]{chromposquasi}, \cite[Proposition 5.3]{chromquasihessenberg}, but this proof technique is new.

\begin{proposition} \label{prop:paths} The path $P_n=([n],\{\{i,i+1\}:1\leq i\leq n-1\})$ has chromatic quasisymmetric function given by
\begin{align}\label{eq:path}
X_{P_n}(\bm x;q)&=\sum_{\alpha\vDash n}[\alpha_1]_q([\alpha_2]_q-1)\cdots([\alpha_m]_q-1)e_{\text{sort}(\alpha)}\\\nonumber&=\sum_{\alpha\vDash n}q^{m-1}[\alpha_1]_q[\alpha_2-1]_q\cdots[\alpha_m-1]_qe_{\text{sort}(\alpha)},
\end{align}
where we sum over integer compositions $\alpha=\alpha_1\cdots\alpha_m$ of size $n$. In particular, $X_{P_n}(\bm x;q)$ is $e$-positive.
\end{proposition}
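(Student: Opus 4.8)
The plan is to apply Theorem~\ref{thm:foresttriples} to $G = P_n$ and then construct a sign-reversing involution $\varphi$ on $\text{FT}(P_n)$ whose fixed points carry exactly the stated contributions. First I would understand the decreasing subtrees of $P_n$: since $P_n$ is itself a path, a subtree must be a subpath $\{a, a+1, \dots, b\}$, and the decreasing condition forces the ``largest vertex'' structure to be well-behaved; in fact $\text{list}(T)$ for a subpath $T$ is determined in a simple way and $\text{inv}_{P_n}(\sigma)$ for the concatenation $\sigma = \text{list}(T_1)\cdots\text{list}(T_m)$ counts precisely the ``broken'' adjacencies $i, i+1$ that straddle a descent inside the reading order. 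The key numerology to pin down is: for a tree triple on a subpath of size $k$ with composition first part $\alpha_1$ and parameter $r$, the factor $q^{r-1}$ together with the inversions contributed internally should assemble, after summing over $r$ and over the relevant tree structures, into $[\alpha_1]_q$ for the first block and a factor like $[\alpha_i - 1]_q$ (equivalently $[\alpha_i]_q - 1$ up to the $q^{m-1}$ shift) for each subsequent block.

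The heart of the argument is the involution. On a forest triple $\mathcal{F} = (\mathcal{T}_i)_{i=1}^m$ of $P_n$, I would look for the first place where the combinatorial data is ``unstable'' — for instance, the first tree triple $\mathcal{T}_i = (T_i, \alpha^{(i)}, r_i)$ whose composition $\alpha^{(i)} = \alpha^{(i)}_1 \cdots \alpha^{(i)}_{\ell}$ has $\ell \geq 2$, or where two consecutive trees $T_i, T_{i+1}$ together with a trivial-composition boundary can be merged. The involution would either split a block (decreasing $\ell$ by nothing but rearranging) or, more naturally, toggle between a configuration where a part $\alpha^{(i)}_j$ is ``fused'' to the previous part (changing $\ell(\alpha^{(i)})$ by one, hence flipping the sign) and its unfused counterpart; the type $\mu = \text{sort}$ of the concatenation must be preserved, and the weight $\text{inv}_{P_n}(\sigma) + \sum(r_i - 1)$ must be preserved. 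Because the path is so rigid, splitting a part $\alpha^{(i)}_j$ off corresponds geometrically to breaking the subpath at a specific internal vertex, and the change in $G$-inversions from that break is exactly compensated by a change in the available range of some $r$-parameter — this matching of the $q$-statistics across $\varphi$ is the step I expect to be the main obstacle, since it is where the precise definitions of $\text{list}$, $\text{inv}_G$, and the $r_i$ all have to interlock.

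Once $\varphi$ is defined and shown to be a sign-reversing, weight- and type-preserving involution, the surviving fixed points should be exactly the forest triples where every $\alpha^{(i)}$ is a single part and no boundary is ``mergeable''; reading off the first part gives the $[\alpha_1]_q$ factor (summing $r_1$ over $1, \dots, \alpha_1$ against $\text{inv} = 0$ for the initial block of a path), and each subsequent single-part block of size $\alpha_i$ on a contiguous subpath contributes, after summing $r_i$ and accounting for the one forced inversion at its left boundary, a factor $q([\alpha_i]_q - 1)/ \text{(something)}$ that collapses to $[\alpha_i]_q - 1$, or equivalently $q^{m-1}[\alpha_i - 1]_q$ after collecting the global $q$-shift. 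Summing over all compositions $\alpha \vDash n$ and recognizing $[\alpha_i]_q - 1 = q[\alpha_i - 1]_q$ yields both displayed forms in \eqref{eq:path}. Finally, $e$-positivity is immediate since every coefficient is a product of polynomials in $q$ with nonnegative coefficients. Throughout I would lean on Example~\ref{ex:coefn} and the promised Lemma~\ref{lem:sumovertreelists} to handle the single-block ($m=1$, type $n$) case as a sanity check, and on Remark~\ref{rem:reverse} to simplify bookkeeping by working with whichever orientation makes the $\text{list}$ statistic cleanest.
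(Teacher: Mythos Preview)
Your overall strategy---apply Theorem~\ref{thm:foresttriples} and build a sign-reversing involution that toggles between splitting off the last part of a long composition and merging two adjacent tree triples---is exactly the paper's approach. However, you are overcomplicating the $q$-bookkeeping and missing the key simplification that makes paths clean.

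The point you have not seen is that $\text{list}$ of a subpath $\{a,a+1,\dots,b\}$ is simply $a\,(a{+}1)\cdots b$, and since the trees in a forest triple are ordered by minimum vertex, the concatenation $\sigma$ is always $12\cdots n$. Hence $\text{inv}_{P_n}(\sigma)=0$ identically and $\text{weight}(\mathcal F)=\sum_i(r_i-1)$ with no inversion contribution at all. There is therefore no ``compensation between inversions and $r$-parameters'' to worry about: the involution just sets $r=1$ on the newly created tree triple when splitting, and absorbs an $r=1$ when merging, which trivially preserves weight. Your talk of ``broken adjacencies that straddle a descent'' and ``one forced inversion at the left boundary'' is describing phenomena that do not occur here.

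With this simplification the fixed points are precisely those $\mathcal F$ where every $\alpha^{(i)}$ has a single part and $r_i\geq 2$ for $i\geq 2$ (so that no merge is available). Summing $q^{r_i-1}$ over $r_1\in\{1,\dots,\alpha_1\}$ gives $[\alpha_1]_q$, and over $r_i\in\{2,\dots,\alpha_i\}$ for $i\geq 2$ gives $[\alpha_i]_q-1=q[\alpha_i-1]_q$, which is the formula directly. One further detail: the paper scans for the \emph{maximal} index $j$ that is unstable, not the first; either convention can be made to work, but you should check that your chosen pivot remains the pivot after applying $\varphi$, which is what guarantees $\varphi$ is an involution.
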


\begin{example}
The chromatic quasisymmetric function $X_{P_6}(\bm x;q)$ with the compositions contributing to \eqref{eq:path}, is given by
\begin{align}\label{eq:P6example}
X_{P_6}(\bm x;q)&=q^2[2]_qe_{222}+q^2[2]_qe_{321}+q^2[2]_qe_{321}+q[3]_q[2]_qe_{33}\\\nonumber&\ \ \ \ \ \ \ \ \ \ \ \
\text{\tiny{\textcolor{red}{222}}} \ \ \ \ \ \ \ \ \ \ \ \ \text{\tiny{\textcolor{red}{123}}} \ \ \ \ \ \ \ \ \ \ \ \ \
\text{\tiny{\textcolor{red}{132}}} \ \ \ \ \ \ \ \ \ \ \ \ \ \ \
\text{\tiny{\textcolor{red}{33}}} \ \ \ \ \ \ \ \ \ \ \ \ \ \ \\\nonumber&+q[2]_q[3]_qe_{42}+q[4]_qe_{42}+q[4]_qe_{51}+[6]_qe_6.
\\\nonumber&\ \ \ \ \ \ \ \ \ \ \ \ \ \ \
\text{\tiny{\textcolor{red}{24}}} \ \ \ \ \ \ \ \ \ \ \
\text{\tiny{\textcolor{red}{42}}} \ \ \ \ \ \ \ \ \ \ \
\text{\tiny{\textcolor{red}{15}}} \ \ \ \ \ \ \ \ \ \
\text{\tiny{\textcolor{red}{6}}}
\end{align}
\end{example}

\begin{proof} [Proof of Proposition \ref{prop:paths}. ] Because decreasing subtrees of $P_n$ are simply paths from some vertex $i$ to some vertex $j>i$, which we will denote $P_{i\to j}$, a forest triple $\mathcal F\in \text{FT}(P_n)$ must be of the form 
\begin{equation}
\mathcal F=(\mathcal T_1=(P_{1\to i_2-1},\alpha^{(1)},r_1),\mathcal T_2=(P_{i_2\to i_3-1},\alpha^{(2)},r_2),\ldots,\mathcal T_m=(P_{i_m\to n},\alpha^{(m)},r_m)).
\end{equation}
Also note that we always have $\sigma=123\cdots n$, so $\text{inv}_{P_n}(\sigma)=0$ and $\text{weight}(\mathcal F)=\sum_{i=1}^m(r_i-1)$. We will now define a forest triple $\varphi(\mathcal F)$ as follows. Let $1\leq j\leq m$ be maximal, if one exists, such that either $\ell(\alpha^{(j)})\geq 2$, or we have $j\geq 2$, $\ell(\alpha^{(j)})=1$, and $r_j=1$. If $\ell(\alpha^{(j)})\geq 2$, then letting $\alpha^{(j)}\setminus\alpha^{(j)}_\ell$ denote the composition $\alpha^{(j)}$ with its last part $\alpha^{(j)}_\ell$ removed, we define $\varphi(\mathcal F)$ by replacing $\mathcal T_j$ by the two tree triples
\begin{equation}
\mathcal S_1=(P_{i_j\to i_{j+1}-1-\alpha^{(j)}_\ell},\alpha^{(j)}\setminus\alpha^{(j)}_\ell,r_j) \text{ and }\mathcal S_2=(P_{i_{j+1}-\alpha^{(j)}_\ell\to i_{j+1}-1},\alpha^{(j)}_\ell,1).
\end{equation}
On the other hand, if $j\geq 2$, $\ell(\alpha^{(j)})=1$, and $r_j=1$, then we define $\varphi(\mathcal F)$ by replacing $\mathcal T_{j-1}$ and $\mathcal T_j$ by the tree triple
\begin{equation}
\mathcal T=(P_{i_{j-1}\to i_{j+1}-1},\alpha^{(j-1)}\cdot\alpha^{(j)},r_{j-1}).
\end{equation}
If no such $j$ exists, we set $\varphi(\mathcal F)=\mathcal F$. Informally, we look at our tree triples from right to left, identifying the first one where either the composition has at least two parts, in which case we break the path into two, or where we can join with the previous tree triple to undo this process. \\

We now claim that $\varphi$ is an \emph{involution}, meaning that $\varphi(\varphi(\mathcal F))=\mathcal F$. This is clear if $\mathcal F$ is a \emph{fixed point}, meaning that $\varphi(\mathcal F)=\mathcal F$, otherwise by maximality of $j$, we have $\ell(\alpha^{(k)})=1$ and $r_k\geq 2$ for $j+1\leq k\leq m$, so applying $\varphi$ again will replace the desired tree triples. We also see that the map $\varphi$ preserves type because the parts of the compositions do not change and it preserves weight because the $r_i$ that appear or disappear are always equal to one. If $\mathcal F$ is a fixed point we have $\text{sign}(\mathcal F)=+1$ because every composition must have length one, and if $\mathcal F$ is not a fixed point then $\varphi$ is \emph{sign-reversing}, meaning that $\text{sign}(\varphi(\mathcal F))=-\text{sign}(\mathcal F)$, because the total number of tree triples changes by one.\\

In summary, this means that the map $\varphi$ is a type-and-weight-preserving bijection between the negatively-signed non-fixed forest triples and the positively-signed non-fixed forest triples of $P_n$. Therefore, \eqref{eq:ftformula} reduces to a sum over only the fixed forest triples. Such an $\mathcal F$ can be associated with a composition $\alpha\vDash n$ by reading the tree lengths from left to right, we will have $\text{type}(\mathcal F)=\text{sort}(\alpha)$, and we must have $r_i\geq 2$ for every $i\geq 2$, so we have as desired
\begin{align}
X_{P_n}(\bm x;q)&=\sum_{\mathcal F\text{ fixed }}q^{\text{weight}(\mathcal F)}e_{\text{type}(\mathcal F)}=\sum_{\alpha\vDash n}\sum_{\substack{1\leq r_i\leq\alpha_i\\r_i\geq 2\text{ if }i\geq 2}}q^{\sum_{i=1}^m(r_i-1)}e_{\text{sort}(\alpha)}\\\nonumber&=\sum_{\alpha\vDash n}[\alpha_1]_q([\alpha_2]_q-1)\cdots([\alpha_m]_q-1)e_{\text{sort}(\alpha)}.
\end{align}
\end{proof}

\begin{example} Figure \ref{fig:pathexample} shows all of the forest triples of $P_6$ of type $222$ paired under our sign-reversing involution $\varphi$. The compositions are not written but they are $2$, $22$, or $222$. We have indicated whether each $r$ is $1$ or $2$ by circling the $r$-th smallest vertex of the corresponding tree. Informally, we can join two paths if the path on the right has its leftmost vertex circled. The coefficient of $e_{222}$ in $X_{P_6}(\bm x;q)$ is $(q^2+q^3)$, corresponding to the two fixed points. 
\end{example}

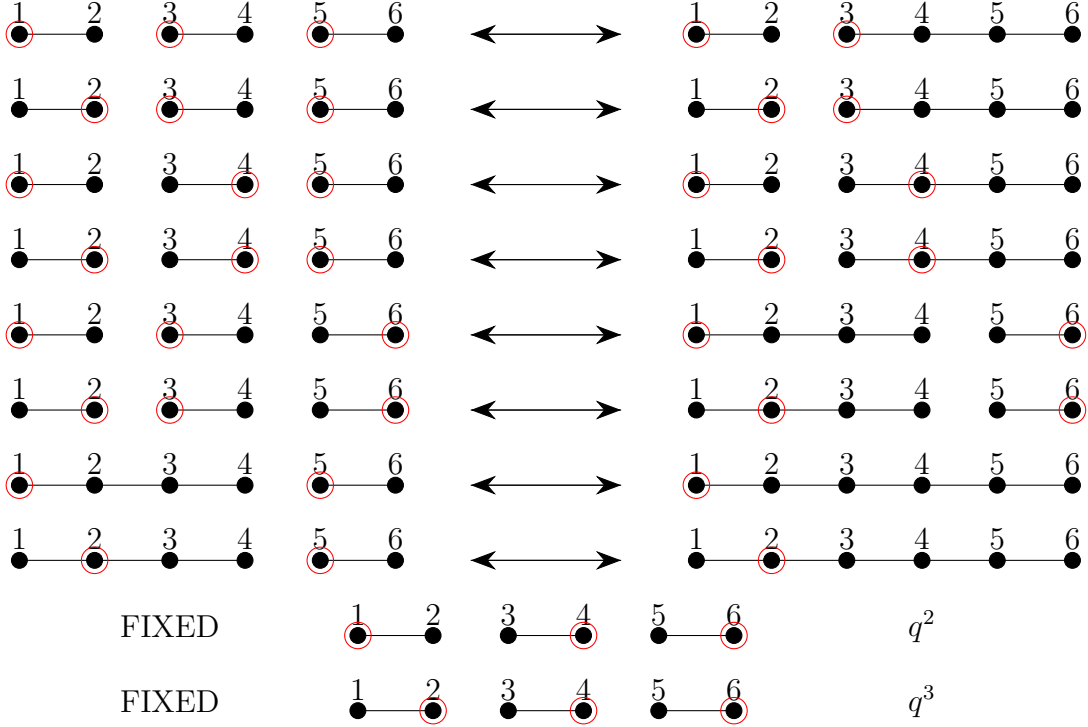
\begin{figure}\caption{\label{fig:pathexample} The forest triples of $P_6$ of type $222$ paired under $\varphi$}
\begin{tikzpicture}
\filldraw (1,0) circle (3pt) node[align=center,above] (1){1};
\filldraw (2,0) circle (3pt) node[align=center,above] (2){2};
\filldraw (3,0) circle (3pt) node[align=center,above] (3){3};
\filldraw (4,0) circle (3pt) node[align=center,above] (4){4};
\filldraw (5,0) circle (3pt) node[align=center,above] (5){5};
\filldraw (6,0) circle (3pt) node[align=center,above] (6){6};
\draw (1,0)--(2,0) (3,0)--(4,0) (5,0)--(6,0);
\draw [color=red](1,0) circle (5pt) node[align=center,above](){};
\draw [color=red](3,0) circle (5pt) node[align=center,above](){};
\draw [color=red](5,0) circle (5pt) node[align=center,above](){};
\draw [arrows = {-Stealth[scale=2]}] (7,0)--(9,0);
\draw [arrows = {-Stealth[scale=2]}] (9,0)--(7,0);
\filldraw (10,0) circle (3pt) node[align=center,above] (1){1};
\filldraw (11,0) circle (3pt) node[align=center,above] (2){2};
\filldraw (12,0) circle (3pt) node[align=center,above] (3){3};
\filldraw (13,0) circle (3pt) node[align=center,above] (4){4};
\filldraw (14,0) circle (3pt) node[align=center,above] (5){5};
\filldraw (15,0) circle (3pt) node[align=center,above] (6){6};
\draw (10,0)--(11,0) (12,0)--(13,0)--(14,0)--(15,0);
\draw [color=red](10,0) circle (5pt) node[align=center,above](){};
\draw [color=red](12,0) circle (5pt) node[align=center,above](){};

\filldraw (1,-1) circle (3pt) node[align=center,above] (1){1};
\filldraw (2,-1) circle (3pt) node[align=center,above] (2){2};
\filldraw (3,-1) circle (3pt) node[align=center,above] (3){3};
\filldraw (4,-1) circle (3pt) node[align=center,above] (4){4};
\filldraw (5,-1) circle (3pt) node[align=center,above] (5){5};
\filldraw (6,-1) circle (3pt) node[align=center,above] (6){6};
\draw (1,-1)--(2,-1) (3,-1)--(4,-1) (5,-1)--(6,-1);
\draw [color=red](2,-1) circle (5pt) node[align=center,above](){};
\draw [color=red](3,-1) circle (5pt) node[align=center,above](){};
\draw [color=red](5,-1) circle (5pt) node[align=center,above](){};
\draw [arrows = {-Stealth[scale=2]}] (7,-1)--(9,-1);
\draw [arrows = {-Stealth[scale=2]}] (9,-1)--(7,-1);
\filldraw (10,-1) circle (3pt) node[align=center,above] (1){1};
\filldraw (11,-1) circle (3pt) node[align=center,above] (2){2};
\filldraw (12,-1) circle (3pt) node[align=center,above] (3){3};
\filldraw (13,-1) circle (3pt) node[align=center,above] (4){4};
\filldraw (14,-1) circle (3pt) node[align=center,above] (5){5};
\filldraw (15,-1) circle (3pt) node[align=center,above] (6){6};
\draw (10,-1)--(11,-1) (12,-1)--(13,-1)--(14,-1)--(15,-1);
\draw [color=red](11,-1) circle (5pt) node[align=center,above](){};
\draw [color=red](12,-1) circle (5pt) node[align=center,above](){};

\filldraw (1,-2) circle (3pt) node[align=center,above] (1){1};
\filldraw (2,-2) circle (3pt) node[align=center,above] (2){2};
\filldraw (3,-2) circle (3pt) node[align=center,above] (3){3};
\filldraw (4,-2) circle (3pt) node[align=center,above] (4){4};
\filldraw (5,-2) circle (3pt) node[align=center,above] (5){5};
\filldraw (6,-2) circle (3pt) node[align=center,above] (6){6};
\draw (1,-2)--(2,-2) (3,-2)--(4,-2) (5,-2)--(6,-2);
\draw [color=red](1,-2) circle (5pt) node[align=center,above](){};
\draw [color=red](4,-2) circle (5pt) node[align=center,above](){};
\draw [color=red](5,-2) circle (5pt) node[align=center,above](){};
\draw [arrows = {-Stealth[scale=2]}] (7,-2)--(9,-2);
\draw [arrows = {-Stealth[scale=2]}] (9,-2)--(7,-2);
\filldraw (10,-2) circle (3pt) node[align=center,above] (1){1};
\filldraw (11,-2) circle (3pt) node[align=center,above] (2){2};
\filldraw (12,-2) circle (3pt) node[align=center,above] (3){3};
\filldraw (13,-2) circle (3pt) node[align=center,above] (4){4};
\filldraw (14,-2) circle (3pt) node[align=center,above] (5){5};
\filldraw (15,-2) circle (3pt) node[align=center,above] (6){6};
\draw (10,-2)--(11,-2) (12,-2)--(13,-2)--(14,-2)--(15,-2);
\draw [color=red](10,-2) circle (5pt) node[align=center,above](){};
\draw [color=red](13,-2) circle (5pt) node[align=center,above](){};

\filldraw (1,-3) circle (3pt) node[align=center,above] (1){1};
\filldraw (2,-3) circle (3pt) node[align=center,above] (2){2};
\filldraw (3,-3) circle (3pt) node[align=center,above] (3){3};
\filldraw (4,-3) circle (3pt) node[align=center,above] (4){4};
\filldraw (5,-3) circle (3pt) node[align=center,above] (5){5};
\filldraw (6,-3) circle (3pt) node[align=center,above] (6){6};
\draw (1,-3)--(2,-3) (3,-3)--(4,-3) (5,-3)--(6,-3);
\draw [color=red](2,-3) circle (5pt) node[align=center,above](){};
\draw [color=red](4,-3) circle (5pt) node[align=center,above](){};
\draw [color=red](5,-3) circle (5pt) node[align=center,above](){};
\draw [arrows = {-Stealth[scale=2]}] (7,-3)--(9,-3);
\draw [arrows = {-Stealth[scale=2]}] (9,-3)--(7,-3);
\filldraw (10,-3) circle (3pt) node[align=center,above] (1){1};
\filldraw (11,-3) circle (3pt) node[align=center,above] (2){2};
\filldraw (12,-3) circle (3pt) node[align=center,above] (3){3};
\filldraw (13,-3) circle (3pt) node[align=center,above] (4){4};
\filldraw (14,-3) circle (3pt) node[align=center,above] (5){5};
\filldraw (15,-3) circle (3pt) node[align=center,above] (6){6};
\draw (10,-3)--(11,-3) (12,-3)--(13,-3)--(14,-3)--(15,-3);
\draw [color=red](11,-3) circle (5pt) node[align=center,above](){};
\draw [color=red](13,-3) circle (5pt) node[align=center,above](){};

\filldraw (1,-4) circle (3pt) node[align=center,above] (1){1};
\filldraw (2,-4) circle (3pt) node[align=center,above] (2){2};
\filldraw (3,-4) circle (3pt) node[align=center,above] (3){3};
\filldraw (4,-4) circle (3pt) node[align=center,above] (4){4};
\filldraw (5,-4) circle (3pt) node[align=center,above] (5){5};
\filldraw (6,-4) circle (3pt) node[align=center,above] (6){6};
\draw (1,-4)--(2,-4) (3,-4)--(4,-4) (5,-4)--(6,-4);
\draw [color=red](1,-4) circle (5pt) node[align=center,above](){};
\draw [color=red](3,-4) circle (5pt) node[align=center,above](){};
\draw [color=red](6,-4) circle (5pt) node[align=center,above](){};
\draw [arrows = {-Stealth[scale=2]}] (7,-4)--(9,-4);
\draw [arrows = {-Stealth[scale=2]}] (9,-4)--(7,-4);
\filldraw (10,-4) circle (3pt) node[align=center,above] (1){1};
\filldraw (11,-4) circle (3pt) node[align=center,above] (2){2};
\filldraw (12,-4) circle (3pt) node[align=center,above] (3){3};
\filldraw (13,-4) circle (3pt) node[align=center,above] (4){4};
\filldraw (14,-4) circle (3pt) node[align=center,above] (5){5};
\filldraw (15,-4) circle (3pt) node[align=center,above] (6){6};
\draw (10,-4)--(11,-4)--(12,-4)--(13,-4) (14,-4)--(15,-4);
\draw [color=red](10,-4) circle (5pt) node[align=center,above](){};
\draw [color=red](15,-4) circle (5pt) node[align=center,above](){};

\filldraw (1,-5) circle (3pt) node[align=center,above] (1){1};
\filldraw (2,-5) circle (3pt) node[align=center,above] (2){2};
\filldraw (3,-5) circle (3pt) node[align=center,above] (3){3};
\filldraw (4,-5) circle (3pt) node[align=center,above] (4){4};
\filldraw (5,-5) circle (3pt) node[align=center,above] (5){5};
\filldraw (6,-5) circle (3pt) node[align=center,above] (6){6};
\draw (1,-5)--(2,-5) (3,-5)--(4,-5) (5,-5)--(6,-5);
\draw [color=red](2,-5) circle (5pt) node[align=center,above](){};
\draw [color=red](3,-5) circle (5pt) node[align=center,above](){};
\draw [color=red](6,-5) circle (5pt) node[align=center,above](){};
\draw [arrows = {-Stealth[scale=2]}] (7,-5)--(9,-5);
\draw [arrows = {-Stealth[scale=2]}] (9,-5)--(7,-5);
\filldraw (10,-5) circle (3pt) node[align=center,above] (1){1};
\filldraw (11,-5) circle (3pt) node[align=center,above] (2){2};
\filldraw (12,-5) circle (3pt) node[align=center,above] (3){3};
\filldraw (13,-5) circle (3pt) node[align=center,above] (4){4};
\filldraw (14,-5) circle (3pt) node[align=center,above] (5){5};
\filldraw (15,-5) circle (3pt) node[align=center,above] (6){6};
\draw (10,-5)--(11,-5)--(12,-5)--(13,-5) (14,-5)--(15,-5);
\draw [color=red](11,-5) circle (5pt) node[align=center,above](){};
\draw [color=red](15,-5) circle (5pt) node[align=center,above](){};

\filldraw (1,-6) circle (3pt) node[align=center,above] (1){1};
\filldraw (2,-6) circle (3pt) node[align=center,above] (2){2};
\filldraw (3,-6) circle (3pt) node[align=center,above] (3){3};
\filldraw (4,-6) circle (3pt) node[align=center,above] (4){4};
\filldraw (5,-6) circle (3pt) node[align=center,above] (5){5};
\filldraw (6,-6) circle (3pt) node[align=center,above] (6){6};
\draw (1,-6)--(2,-6)--(3,-6)--(4,-6) (5,-6)--(6,-6);
\draw [color=red](1,-6) circle (5pt) node[align=center,above](){};
\draw [color=red](5,-6) circle (5pt) node[align=center,above](){};
\draw [arrows = {-Stealth[scale=2]}] (7,-6)--(9,-6);
\draw [arrows = {-Stealth[scale=2]}] (9,-6)--(7,-6);
\filldraw (10,-6) circle (3pt) node[align=center,above] (1){1};
\filldraw (11,-6) circle (3pt) node[align=center,above] (2){2};
\filldraw (12,-6) circle (3pt) node[align=center,above] (3){3};
\filldraw (13,-6) circle (3pt) node[align=center,above] (4){4};
\filldraw (14,-6) circle (3pt) node[align=center,above] (5){5};
\filldraw (15,-6) circle (3pt) node[align=center,above] (6){6};
\draw (10,-6)--(11,-6)--(12,-6)--(13,-6)--(14,-6)--(15,-6);
\draw [color=red](10,-6) circle (5pt) node[align=center,above](){};

\filldraw (1,-7) circle (3pt) node[align=center,above] (1){1};
\filldraw (2,-7) circle (3pt) node[align=center,above] (2){2};
\filldraw (3,-7) circle (3pt) node[align=center,above] (3){3};
\filldraw (4,-7) circle (3pt) node[align=center,above] (4){4};
\filldraw (5,-7) circle (3pt) node[align=center,above] (5){5};
\filldraw (6,-7) circle (3pt) node[align=center,above] (6){6};
\draw (1,-7)--(2,-7)--(3,-7)--(4,-7) (5,-7)--(6,-7);
\draw [color=red](2,-7) circle (5pt) node[align=center,above](){};
\draw [color=red](5,-7) circle (5pt) node[align=center,above](){};
\draw [arrows = {-Stealth[scale=2]}] (7,-7)--(9,-7);
\draw [arrows = {-Stealth[scale=2]}] (9,-7)--(7,-7);
\filldraw (10,-7) circle (3pt) node[align=center,above] (1){1};
\filldraw (11,-7) circle (3pt) node[align=center,above] (2){2};
\filldraw (12,-7) circle (3pt) node[align=center,above] (3){3};
\filldraw (13,-7) circle (3pt) node[align=center,above] (4){4};
\filldraw (14,-7) circle (3pt) node[align=center,above] (5){5};
\filldraw (15,-7) circle (3pt) node[align=center,above] (6){6};
\draw (10,-7)--(11,-7)--(12,-7)--(13,-7)--(14,-7)--(15,-7);
\draw [color=red](11,-7) circle (5pt) node[align=center,above](){};

\draw (3,-7.875) node (){FIXED};
\draw (3,-8.875) node (){FIXED};

\draw (13,-7.875) node (){$q^2$};
\draw (13,-8.875) node (){$q^3$};

\filldraw (5.5,-8) circle (3pt) node[align=center,above] (1){1};
\filldraw (6.5,-8) circle (3pt) node[align=center,above] (2){2};
\filldraw (7.5,-8) circle (3pt) node[align=center,above] (3){3};
\filldraw (8.5,-8) circle (3pt) node[align=center,above] (4){4};
\filldraw (9.5,-8) circle (3pt) node[align=center,above] (5){5};
\filldraw (10.5,-8) circle (3pt) node[align=center,above] (6){6};
\draw (5.5,-8)--(6.5,-8) (7.5,-8)--(8.5,-8) (9.5,-8)--(10.5,-8);
\draw [color=red](5.5,-8) circle (5pt) node[align=center,above](){};
\draw [color=red](8.5,-8) circle (5pt) node[align=center,above](){};
\draw [color=red](10.5,-8) circle (5pt) node[align=center,above](){};

\filldraw (5.5,-9) circle (3pt) node[align=center,above] (1){1};
\filldraw (6.5,-9) circle (3pt) node[align=center,above] (2){2};
\filldraw (7.5,-9) circle (3pt) node[align=center,above] (3){3};
\filldraw (8.5,-9) circle (3pt) node[align=center,above] (4){4};
\filldraw (9.5,-9) circle (3pt) node[align=center,above] (5){5};
\filldraw (10.5,-9) circle (3pt) node[align=center,above] (6){6};
\draw (5.5,-9)--(6.5,-9) (7.5,-9)--(8.5,-9) (9.5,-9)--(10.5,-9);
\draw [color=red](6.5,-9) circle (5pt) node[align=center,above](){};
\draw [color=red](8.5,-9) circle (5pt) node[align=center,above](){};
\draw [color=red](10.5,-9) circle (5pt) node[align=center,above](){};
\end{tikzpicture}
\end{figure}

We now prepare to prove Theorem \ref{thm:foresttriples}. We begin by studying the permutation $\text{list}(T)$. We will then use Alexandersson and Sulzgruber's combinatorial $e$-expansion of shifted \emph{LLT polynomials}, which were originally introduced by Lascoux, Leclerc, and Thibon \cite{lltoriginal} and shown by Carlsson and Mellit to be related to chromatic quasisymmetric functions via an operation called \emph{plethystic substitution}. This relationship was also studied by Alexandersson and Panova \cite{lltchrom}.

\begin{definition}
Let $\sigma=\sigma_1\cdots\sigma_h$ be a permutation of a subset $A\subseteq[n]$. A \emph{descent} of $\sigma$ is a pair of adjacent entries $(\sigma_i,\sigma_{i+1})$ with $\sigma_i>\sigma_{i+1}$. A \emph{left-to-right (LR) maximum} of $\sigma$ is an entry $\sigma_j$ such that $\sigma_j>\sigma_i$ for every $i<j$. In other words, reading the entries of $\sigma$ from left to right, when we read the entry $\sigma_j$, it is the maximum entry read by that point. We say that $\sigma$ is a \emph{tree list} of $G$ if it satisfies the following two conditions.
\begin{enumerate}
\item (Descent condition) For every descent $\sigma_i>\sigma_{i+1}$ of $\sigma$, we have $\{\sigma_{i+1},\sigma_i\}\in E(G)$.
\item (LR maxima condition) Let $a_1<\cdots<a_s$ be the sequence of LR maxima of $\sigma$. Then for every $1\leq i\leq s-1$, we have $\{a_i,a_{i+1}\}\in E(G)$. 
\end{enumerate}
\end{definition}

\begin{lemma}\label{lem:treelistbijection}
The function $\text{list}$ is a bijection between decreasing subtrees of $G$ and tree lists of $G$.
\end{lemma}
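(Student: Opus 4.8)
The plan is to exhibit an explicit two-sided inverse to $\text{list}$, namely a map $\text{tree}$ that takes a tree list $\sigma$ of $G$ and reconstructs a decreasing subtree. First I would check that $\text{list}(T)$ is actually a tree list of $G$ for every decreasing subtree $T$. The descent condition should be immediate: if $\sigma_{i+1}<\sigma_i$ are adjacent in $\text{list}(T)$, then at the step where $\sigma_{i+1}$ was read it was the smallest unread vertex adjacent to something already read; since it is smaller than the just-read vertex $\sigma_i$ and was not read earlier, the only way it could have become ``available'' exactly now is that $\sigma_i$ is the vertex of $T$ making it available, i.e. $\{\sigma_{i+1},\sigma_i\}\in E(T)\subseteq E(G)$. (One has to argue this carefully using the reading rule.) For the LR maxima condition, observe that the LR maxima of $\text{list}(T)$ are exactly the vertices on the path in $T$ from $\min V(T)$ to $\max V(T)$, read in increasing order — this uses the decreasing property of $T$ — and consecutive such vertices are adjacent in $T$, hence in $G$.

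Next I would define the inverse. Given a tree list $\sigma=\sigma_1\cdots\sigma_h$, build a tree $\text{tree}(\sigma)$ on vertex set $\{\sigma_1,\dots,\sigma_h\}$ by processing the entries left to right: $\sigma_1$ is the root-to-be (it will turn out $\sigma_1=\min$), and when we reach $\sigma_j$ with $j\ge 2$ we attach $\sigma_j$ by an edge to the largest among $\sigma_1,\dots,\sigma_{j-1}$ that is a neighbour of $\sigma_j$ in $G$. I need to know such a neighbour always exists: if $\sigma_j$ is not an LR maximum, then some earlier entry exceeds it, and walking back one finds an earlier entry $\sigma_{j-1}$ or else a descent, and the descent/LR-maxima conditions together with the natural-unit-interval property \eqref{eq:uicondition} force a $G$-neighbour among the earlier entries; if $\sigma_j$ is an LR maximum it is adjacent to the previous LR maximum by the LR maxima condition. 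Then I would verify $\text{tree}(\sigma)$ is decreasing: the parent of $\sigma_j$ is always a larger earlier entry, so following parents from any vertex strictly increases the label until we reach $\sigma_1$, which is therefore $\max$... wait — rather, one shows $\sigma_1 = \min V$ and the unique larger neighbour condition holds, so $\text{tree}(\sigma)$ is decreasing in the sense of the definition.

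Finally I would check the two compositions are identities. For $\text{list}(\text{tree}(\sigma))=\sigma$: by induction on the position, the reading rule for $\text{list}$ applied to $\text{tree}(\sigma)$ picks off exactly $\sigma_1,\sigma_2,\dots$ in order — here the key point is that at each step the ``smallest unread vertex adjacent to a read vertex'' coincides with the next entry of $\sigma$, which follows because every $\sigma_j$ is adjacent in $\text{tree}(\sigma)$ to an earlier entry while the descent and LR-maxima conditions prevent any later, smaller entry from becoming available prematurely (this is where condition (1) and (2) of the tree list definition earn their keep). For $\text{tree}(\text{list}(T))=T$: again by induction, the parent assigned to vertex $\sigma_j$ in $\text{tree}(\text{list}(T))$ is its unique larger $T$-neighbour, because in a decreasing tree that neighbour is the largest earlier-read vertex adjacent to $\sigma_j$ — any other $G$-neighbour of $\sigma_j$ read earlier is smaller than $\sigma_j$ (else it would be the parent), using \eqref{eq:uicondition} to rule out a larger non-$T$-neighbour sneaking in.

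The main obstacle I anticipate is the careful bookkeeping in showing that the greedy ``attach to the largest earlier $G$-neighbour'' rule in $\text{tree}$ is forced — i.e. that it is the \emph{only} choice consistent with being a decreasing subtree mapping back to $\sigma$ under $\text{list}$ — and dually that no smaller later entry can be read early by $\text{list}$. Both of these hinge on interleaving the descent condition, the LR maxima condition, and the transitivity-like consequence \eqref{eq:uicondition} of the natural unit interval structure, and getting that interplay exactly right (rather than the two separate easy directions) is the delicate part.
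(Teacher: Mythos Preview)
Your overall strategy --- show that $\text{list}$ lands in the set of tree lists, then construct an explicit inverse --- is the same as the paper's, and your arguments for the descent condition and the LR-maxima condition are essentially correct (the observation that the LR maxima are exactly the vertices on the $T$-path from $\min V(T)$ to $\max V(T)$ is a clean way to see the second condition).

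However, your proposed inverse map $\text{tree}$ is wrong. You attach $\sigma_j$ to the \emph{largest} earlier $G$-neighbour, but this does not recover $T$ in general. Take $G=K_4$ and the decreasing tree $T$ with edges $\{1,4\},\{3,4\},\{2,3\}$, so that $\text{list}(T)=1432$. Your rule attaches $2$ to $4$ (the largest of $\{1,4,3\}$), producing the star with center $4$; but $\text{list}$ of that star is $1423\neq 1432$. In fact your $\text{tree}$ sends both tree lists $1432$ and $1423$ to the same tree, so it cannot be a bijection. The claim in your last paragraph that ``any other $G$-neighbour of $\sigma_j$ read earlier is smaller than $\sigma_j$'' is exactly what fails: in the example, $4$ is an earlier $G$-neighbour of $2$ that is larger than $2$ yet is not its $T$-parent.

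The correct rule, which the paper uses, is: if $\sigma_j$ is not an LR maximum, its $T$-parent is the \emph{rightmost} earlier entry exceeding $\sigma_j$ (not the largest one); if $\sigma_j$ is an LR maximum, its $T$-parent is the previous LR maximum. One then checks these are $G$-edges using the descent condition together with \eqref{eq:uicondition} in the first case, and the LR-maxima condition in the second. Your intuition that \eqref{eq:uicondition} is needed here is right, but it is used to certify that the \emph{rightmost} larger predecessor is a $G$-neighbour, not to rule out other candidates.
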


\begin{proof}
Let $T$ be a decreasing subtree of $G$ and let $\sigma=\text{list}(T)$ have sequence of LR maxima $a_1<\cdots<a_s$. If $\sigma_j$ is not an LR maximum, then we define its \emph{parent} to be the rightmost entry $\sigma_i$ with $\sigma_i>\sigma_j$ and $i<j$. We claim that the unique larger neighbour of $\sigma_j$ in $T$ is its parent $\sigma_i$. To have read $\sigma_j$ after the larger entry $\sigma_i$, it must be adjacent to some $\sigma_{i'}$ for $i\leq i'<j$, which if $j=i+1$ can only be $\sigma_i$, while if $j\geq i+2$ can again only be $\sigma_i$ because every $\sigma_{i'}$ for $i<i'<j$ already has unique larger neighbour its parent by induction on $j-i$. We also claim that for every $1\leq i\leq s-1$, the unique larger neighbour of $a_i$ in $T$ is $a_{i+1}$. When we read $a_{i+1}$, it must be adjacent to a vertex read already, but the non-LR maxima already have unique larger neighbour their parent and the LR maxima $a_{i'}$ for $1\leq i'<i$ already have unique larger neighbour $a_{i'+1}$ by induction on $i$, so the only possibility is $a_i$.\\

Now $\sigma$ is a tree list of $G$ because if $\sigma_i>\sigma_{i+1}$, then we have shown that $\{\sigma_{i+1},\sigma_i\}\in E(T)$ so $\{\sigma_{i+1},\sigma_i\}\in E(G)$, and for every $1\leq i\leq s-1$, we have shown that $\{a_i,a_{i+1}\}\in E(T)$ so $\{a_i,a_{i+1}\}\in E(G)$. Conversely, given a tree list $\sigma$, we can recover $T$ because we have identified the unique larger neighbour of every vertex. The LR maxima condition ensures that the edges $\{a_i,a_{i+1}\}$ are present in $G$ and thus available for $T$, and if $\sigma_j$ has parent $\sigma_i$, then we have $\sigma_{i+1}\leq \sigma_j<\sigma_i$, so the descent condition ensures that $\{\sigma_{i+1},\sigma_i\}\in E(G)$ and \eqref{eq:uicondition} ensures that the edge $\{\sigma_j,\sigma_i\}$ is present in $G$ and thus available for $T$. 
\end{proof}

%\emph{From now on, we will use a decreasing tree $T$ and its tree list $\sigma=\text{list}(T)$ interchangeably.}

\begin{remark}
Tree lists $\sigma\in S_n$ of $G$ are also in bijection with \emph{acyclic orientations} of $G$ with unique sink $1$, given by orienting each edge toward the earlier entry of $\sigma$.  
\end{remark}

\begin{lemma}\label{lem:sumovertreelists}
Let $A\subseteq [n]$ be nonempty and for a vertex $i\in A$, let $b_i(A)$ denote the number of vertices $j\in A$ with $j<i$ and $\{j,i\}\in E(G)$. Then we have
\begin{equation}\label{eq:sumovertreelists}
\sum_{\sigma\in S_A: \ \sigma\text{ tree list of }G}q^{\text{inv}_G(\sigma)}=\prod_{i\in A: \ i>\min(A)}[b_i(A)]_q.
\end{equation}
\end{lemma}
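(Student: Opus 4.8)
The plan is to prove this by induction on $|A|$, removing the largest element of $A$ and tracking how both sides change. This is the natural strategy since the right-hand side has a product structure that peels off one factor at a time, and tree lists (equivalently, by Lemma \ref{lem:treelistbijection}, decreasing subtrees) have a recursive structure: in a decreasing subtree $T$ on vertex set $A$, the largest vertex $M=\max(A)$ is a leaf whose unique neighbour can be any of the $b_M(A)$ smaller neighbours of $M$ in $G$, and deleting $M$ leaves a decreasing subtree on $A\setminus\{M\}$.

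\textbf{Key steps.} First I would set up the base case: if $|A|=1$, both sides equal $1$ (empty product, single tree list consisting of one vertex with no $G$-inversions). For the inductive step, let $M=\max(A)$ and $A'=A\setminus\{M\}$. I would establish a bijection between tree lists $\sigma\in S_A$ of $G$ and pairs $(\tau, e)$ where $\tau\in S_{A'}$ is a tree list of $G$ and $e$ is an edge from $M$ to one of its $b_M(A)$ smaller neighbours in $G$; via Lemma \ref{lem:treelistbijection} this is just: attach the leaf $M$ to a decreasing subtree on $A'$. The subtle point is what $\text{list}$ does: since $M$ is the largest vertex, the reading procedure reads all of $A'$ in exactly the order $\text{list}(T\setminus M)$ would — no: one must check that inserting $M$ does not change the relative order in which the vertices of $A'$ are read. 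This holds because at each step the procedure picks the \emph{smallest} unread vertex adjacent to a read vertex, and $M$ being largest is never that smallest choice until it is forced (i.e., until its unique neighbour has been read and nothing smaller is available); so $M$ is appended into $\sigma$ at some position, and deleting it recovers $\text{list}(T\setminus M) = \tau$ exactly. Then I would compute the change in the inversion statistic: passing from $\tau$ to $\sigma$, the new $G$-inversions are exactly the pairs $(\text{position of }M, j)$ with $M > \sigma_j$ and $\{\sigma_j, M\} \in E(G)$ — but since $M$ is the max and may sit anywhere after its neighbour, I need to argue that $\text{inv}_G(\sigma) = \text{inv}_G(\tau) + (\text{number of smaller neighbours of }M\text{ in }A\text{ that appear after }M\text{ in }\sigma)$, and that as the attaching edge $e$ ranges over all $b_M(A)$ choices, this increment ranges over $\{0, 1, \ldots, b_M(A)-1\}$, each value attained exactly once.

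\textbf{Main obstacle.} The delicate part is precisely controlling where $M$ lands in $\text{list}(T)$ and verifying the claim that the increment in $\text{inv}_G$ is equidistributed as $0,1,\ldots,b_M(A)-1$ over the choices of $M$'s neighbour. The point should be this: let $c_1 < c_2 < \cdots < c_{b}$ be the smaller neighbours of $M$ in $A$ (where $b = b_M(A)$). If we attach $M$ to $c_k$, then $M$ is read by the list procedure immediately after $c_k$ has been read but — crucially, using the natural unit interval condition \eqref{eq:uicondition} applied to $c_1 < \cdots < c_b < M$, all of $c_1, \ldots, c_b$ are pairwise adjacent and adjacent to $M$ — so among $c_1, \ldots, c_b$, exactly $c_{k+1}, \ldots, c_b$ (those larger than $c_k$) are read \emph{after} $c_k$, hence after $M$; these are the $b-k$ smaller neighbours of $M$ appearing after it, each contributing one $G$-inversion, giving increment $b-k$. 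As $k$ runs from $1$ to $b$ this gives each of $b-1, b-2, \ldots, 0$ exactly once, so
\begin{equation*}
\sum_{\sigma\in S_A \text{ tree list}} q^{\text{inv}_G(\sigma)} = \left(\sum_{k=0}^{b_M(A)-1} q^k\right)\sum_{\tau\in S_{A'} \text{ tree list}} q^{\text{inv}_G(\tau)} = [b_M(A)]_q \prod_{\substack{i\in A'\\ i>\min(A')}}[b_i(A')]_q,
\end{equation*}
and since $b_i(A') = b_i(A)$ for all $i \in A'$ (removing the maximum $M$ does not affect the count of \emph{smaller} neighbours of any $i < M$) and $\min(A') = \min(A)$, this equals $\prod_{i\in A:\, i>\min(A)}[b_i(A)]_q$, completing the induction. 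I would want to double-check the edge cases where $c_k$ is itself an LR maximum versus an interior vertex, but the adjacency of all the $c_j$ among themselves (from the interval condition) makes the argument uniform.
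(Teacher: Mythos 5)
Your plan---induct on $|A|$ by removing $M=\max(A)$ and gaining a factor $[b_M(A)]_q$---is indeed the paper's strategy, but the bijection you build the inductive step on rests on a false premise. You assert that $M$ is a leaf of every decreasing subtree $T$ on $A$, with a unique neighbour in $T$, so that $T\setminus\{M\}$ is again a decreasing subtree on $A'$. This is not so: by definition $M$ is the one vertex of $T$ with no larger neighbour, i.e.\ the root, and it may have arbitrarily many children. In $K_3$ the star with centre $3$ and leaves $1,2$ is a decreasing subtree in which $3$ has degree two and $T\setminus\{3\}$ is disconnected. Your proposed map ``attach $M$ as a leaf at $c_k$ to a decreasing subtree $T'$ on $A'$'' therefore fails on both sides: if $c_k\neq\max(A')$ then $c_k$ acquires a second larger neighbour and the result is not decreasing, so only the single choice $c_k=\max(A')$ gives a valid tree, not $b_M(A)$ of them; and you produce none of the trees in which $M$ has degree at least two. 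For $K_4$, only two of the six decreasing spanning trees have $4$ as a leaf, while the desired factor is $[b_4]_q=[3]_q$, i.e.\ three trees per base.

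The paper's inductive step is instead carried out at the level of tree lists. Deleting $M$ from $\sigma=\text{list}(T)$ produces a word $\sigma'$ which one checks is again a tree list of $G$ on $A'$; the point is that $\sigma'$ is the list of a \emph{different} decreasing subtree, generally not $T\setminus\{M\}$, which need not be a tree at all. Conversely, $\sigma$ is recovered from $\sigma'$ by inserting $M$ either at the end of $\sigma'$ or immediately before any of its $G$-neighbours other than the first one to occur in $\sigma'$, which gives exactly $b_M(A)$ positions and inversion increments $0,1,\ldots,b_M(A)-1$. Your intermediate claim that $M$ is read immediately after $c_k$ also fails even when $M$ really is a leaf on $c_k$: in $K_4$ with tree edges $\{1,3\},\{2,3\},\{3,4\}$ the vertex $4$ is a leaf attached to $3$, yet $\text{list}(T)=1324$, so $4$ is read after $2$. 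The quantity that controls the increment is the number of $G$-neighbours of $M$ appearing after $M$ in $\sigma$, and this is governed by where $M$ is inserted into $\sigma'$, not by a single ``attachment vertex'' in $T$.
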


\begin{proof} We use induction on $A$, noting that the statement holds if $|A|=1$ or if $b_{\max(A)}(A)=0$, so assume otherwise and let $A'=A\setminus\{\max(A)\}$. A tree list $\sigma\in S_A$ arises from a tree list $\sigma'\in S_{A'}$ precisely by inserting $\max(A)$ either at the end of $\sigma'$ or directly before one of its neighbours other than the one occurring first in $\sigma'$. If $\max(A)$ is inserted at the end, it creates no new $G$-inversions, while if it is inserted directly before the $j$-th neighbour from the right, where $1\leq j\leq b_{\max(A)}(A)-1$, then it creates $j$ new $G$-inversions, so we multiply by $[b_{\max(A)}(A)]_q$ when we go from $A'$ to $A$.

%For the second equality, letting $i_1>\cdots>i_k$ be the neighbours of $\max(A)$ in $A'$, we have $d_{i_j}(A')=j-1$ and $d_{i_j}(A)=j$ for every $1\leq j\leq k$ by \eqref{eq:uicondition}, so when going from $A'$ to $A$, the left-hand side is multiplied by $[k]_q$, while the factors $[k-1]_q[k-2]_q\cdots[1]_q$ on the right-hand side are replaced by factors $[k]_q[k-1]_q\cdots [2]_q[1]_q$.
\end{proof}

We now introduce LLT polynomials and plethystic substitution. All of the properties we will need can be blackboxed. 

\begin{definition}
Let $G=([n],E)$ be a natural unit interval graph. The \emph{LLT polynomial} is
\begin{equation}
\text{LLT}_G(\bm x;q)=\sum_{\substack{\kappa:[n]\to\mathbb N\\\text{ not necessarily proper }}}q^{\text{asc}(\kappa)}x_{\kappa(1)}x_{\kappa(2)}\cdots x_{\kappa(n)}.
\end{equation}
\end{definition}

Note that we are summing over arbitrary colourings of $G$, not just the proper ones. Now these power series are related by the following operation.

\begin{definition}
Let $a(q)=a_0+a_1q+\cdots+a_jq^j$ be a polynomial. The \emph{plethystic substitution} $f\mapsto f[a(q)\bm x]$ is defined by setting $p_k[a(q)\bm x]=(a_0+a_1q^k+\cdots+a_jq^{jk})p_k$ for every $k$, where $p_k=\sum_ix_i^k$ is the \emph{power sum symmetric function}, and extending to an algebra homomorphism on symmetric functions.
\end{definition}

\begin{proposition} \label{prop:plethysmrelation}\cite[Proposition 3.5]{shuffle} The chromatic quasisymmetric function $X_G(\bm x;q)$ is related to the LLT polynomial $\text{LLT}_G(\bm x;q)$ via the equation
\begin{equation}\label{eq:plethysmrelation}
X_G(\bm x;q)=\frac{\text{LLT}_G[(q-1)\bm x]}{(q-1)^n}.
\end{equation}
\end{proposition}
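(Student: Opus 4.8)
\textbf{Proof proposal for Proposition \ref{prop:plethysmrelation}.}
The plan is to reduce the identity to a purely monomial-by-monomial check using the standard plethystic calculus. First I would recall that both sides of \eqref{eq:plethysmrelation} are symmetric functions of degree $n$, so it suffices to compare their expansions in the power-sum basis, or equivalently to evaluate both sides at an arbitrary specialization $x_i \mapsto$ finitely many variables. The key structural input is the \emph{superization} philosophy: the LLT polynomial $\text{LLT}_G(\bm x;q)$, as a sum over all colourings $\kappa:[n]\to\mathbb N$ weighted by $q^{\text{asc}(\kappa)}$, has the property that its plethystic evaluation at $(q-1)\bm x$ separates the colourings according to which colour-classes are ``used'' as a $q$-part versus a $-1$-part. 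Concretely, writing $p_k[(q-1)\bm x] = (q^k-1)p_k$ and extending multiplicatively, one expands $\text{LLT}_G[(q-1)\bm x]$ and tracks the sign contributions.

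The main computation I would carry out is the following: group the colourings $\kappa$ of $G$ by the set partition of $[n]$ into fibers $\kappa^{-1}(c)$. For a fixed such partition $\pi$ with blocks $B_1,\dots,B_t$, the colourings inducing $\pi$ contribute, after plethysm at $(q-1)\bm x$ and division by $(q-1)^n$, a quantity that I expect to telescope: each block $B_j$ of size one contributes a clean factor, while a block $B_j$ of size $\geq 2$ — which forces a monochromatic edge unless $G$ has no edge inside $B_j$ — contributes a factor of $(q-1)$ in the numerator that is cancelled against a $\frac{1}{(q-1)}$, \emph{except} that the internal ascent structure produces an extra factor of $[|B_j|]_q$ or similar, and crucially the whole block-$j$ contribution vanishes unless $B_j$ is an independent set in $G$ or, conversely, survives with the right sign only when the induced colouring is proper. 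The upshot I am aiming for: after the dust settles, the only surviving terms are those coming from \emph{proper} colourings, each with coefficient $q^{\text{asc}(\kappa)}$, which is exactly $X_G(\bm x;q)$. I would organize this as: (i) expand LLT via its monomial/colouring definition; (ii) apply the plethystic substitution blockwise using $p_k \mapsto (q^k-1)p_k$; (iii) re-sum over refinements/coarsenings of set partitions to isolate the $(q-1)$-divisibility; (iv) identify the quotient with the proper-colouring sum.

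The main obstacle, I expect, is step (iii): correctly bookkeeping the sign-and-$q$-power contributions of each non-singleton block and showing the cancellation is exact. This is essentially the content of the Carlsson--Mellit relation, and rather than re-deriving it from scratch I would instead \emph{cite} \cite[Proposition 3.5]{shuffle} (equivalently the relation as stated by Carlsson--Mellit and by Alexandersson--Panova \cite{lltchrom}), since the paper explicitly says ``all of the properties we will need can be blackboxed.'' So in practice the proof of this proposition is a one-line appeal to the literature: the stated equation \eqref{eq:plethysmrelation} is \cite[Proposition 3.5]{shuffle}, and we record it here only because we will use it in the sequel. If a self-contained argument were wanted, one would run the set-partition expansion above, but that is not the intent here — the proposition is stated as a citation, and the ``proof'' is simply the pointer to \cite{shuffle}.
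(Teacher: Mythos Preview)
Your proposal is correct: the paper provides no proof of this proposition at all, treating it as a blackboxed citation to \cite[Proposition 3.5]{shuffle} (as the text immediately preceding the statement explicitly says). Your final paragraph identifies exactly this, so there is nothing to compare.
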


Now Alexandersson conjectured \cite[Conjecture 10]{lltunicellepos} and proved with Sulzgruber \cite[Corollary 2.10]{lltcombe} the following combinatorial $e$-expansion of $\text{LLT}_G(\bm x;q+1)$. They formulated it in terms of the highest reachable vertex, but this is equivalent by Remark \ref{rem:reverse}.

\begin{definition}
Let $\theta\subseteq E(G)$. For $u\in [n]$, the \emph{lowest reachable vertex} $\text{lrv}_\theta(u)$ is the smallest $v\in [n]$ such that the subgraph $([n],\theta)$ has a decreasing path from $u$ to $v$. Defining $u\sim_\theta u'$ if $\text{lrv}_\theta(u)=\text{lrv}_\theta(u')$, we let $\pi(\theta)=\{[u_1]_\theta,\ldots,[u_m]_\theta\}$ be the equivalence classes of $\sim_\theta$ and we define $\lambda(\theta)$ to be the partition of $n$ determined by their sizes.
\end{definition}

\begin{theorem}\label{thm:lltcombe}
Let $G$ be a natural unit interval graph. The shifted LLT polynomial $\text{LLT}_G(\bm x;q+1)$ satisfies
\begin{equation}\label{eq:lltformula}
\text{LLT}_G(\bm x;q+1)=\sum_{\theta\subseteq E(G)}q^{|\theta|}e_{\lambda(\theta)}.
\end{equation}
\end{theorem}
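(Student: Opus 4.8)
The plan is to start from the binomial expansion $(q+1)^{\text{asc}(\kappa)} = \sum_{\theta}q^{|\theta|}$, the sum over subsets $\theta$ of the set $\text{Asc}(\kappa) = \{\{i,j\}\in E(G): i<j,\ \kappa(i)<\kappa(j)\}$ of ascent edges of $\kappa$. Substituting into the definition of $\text{LLT}_G$ and swapping the order of summation yields $\text{LLT}_G(\bm x;q+1) = \sum_{\theta\subseteq E(G)}q^{|\theta|}N_\theta(\bm x)$, where $N_\theta(\bm x) = \sum_\kappa x_{\kappa(1)}\cdots x_{\kappa(n)}$ is the generating function for colourings $\kappa:[n]\to\bN$ that are strictly increasing along every edge of $\theta$ (oriented from smaller to larger label). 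So the theorem is equivalent to the identity $\sum_{|\theta|=k}N_\theta = \sum_{|\theta|=k}e_{\lambda(\theta)}$ for every $k$. The subtlety, which is really the whole point, is that $N_\theta$ is typically \emph{not} symmetric and does \emph{not} equal $e_{\lambda(\theta)}$ term by term (already for $G=K_3$ and $\theta=\{\{1,3\},\{2,3\}\}$ one sees this), so the non-symmetric pieces must be redistributed among edge sets of the same cardinality.

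To organize this, I would use the combinatorial model $e_{\lambda(\theta)} = \sum_g x_{g(1)}\cdots x_{g(n)}$, summed over colourings $g:[n]\to\bN$ that are strictly increasing, in the natural order on $[n]$, on each lrv-equivalence class of $\pi(\theta)$ (this is just the product expansion of $e_{|C|}=\sum_S\prod_{s\in S}x_s$ over the classes $C$, with the elements of each chosen set $S$ assigned to the class in increasing order). The goal is then a single weight-preserving and $|\theta|$-preserving bijection $\Phi$ between $\bigsqcup_\theta\{\kappa:\kappa\ \text{strictly increases along }\theta\}$ and $\bigsqcup_\theta\{g: g\ \text{strictly increases on each class of }\theta\}$. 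I would construct $\Phi$ locally: scanning the edges of $G$ in a fixed order, detect the first edge witnessing the discrepancy between the two conditions (an edge of $\theta$ that is forced, hence redundant, given the class constraints, or a same-class pair that fails to be increasing), and toggle exactly that edge in or out of $\theta$ while keeping the colouring fixed; involutivity would come from the toggle being undone by the same scan.

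The essential structural input, and the place where the natural-unit-interval hypothesis \eqref{eq:uicondition} is used, is the behaviour of the lrv-classes under a one-edge change: adding or deleting an edge $\{i,j\}$ with $i<j$ alters only the down-closures of vertices $\geq j$, so $\pi(\theta)$, and hence $\lambda(\theta)$, changes in one controlled way, either merging two classes or splitting part off one, and \eqref{eq:uicondition} guarantees that the edges needed to reverse the move are actually present in $G$. An alternative route is induction on $n$ by deleting the largest vertex: this realizes $\text{LLT}_G$ as a colour-insertion operator applied to $\text{LLT}_{G'}$ for $G'=G|_{[n-1]}$, while on the right one checks that $\lambda(\theta)$ is obtained from $\lambda(\theta\cap E(G'))$ by appending a part $1$ (when no edge of $\theta$ meets $n$) or incrementing the part of the class containing the chosen neighbour of $n$; the base case is a single clique, which is exactly Example \ref{ex:complete}.

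I expect the main obstacle to be making the bijection (or the recursion) genuinely well-defined rather than merely plausible: tracking precisely which part of $\lambda(\theta)$ is affected by a toggle, proving that $\Phi$ is an involution with no circularity in the scan, and ensuring that every colouring on each side is matched. This bookkeeping is exactly what the natural-unit-interval condition tames, and it is where essentially all the work lies; everything else is the formal binomial manipulation above.
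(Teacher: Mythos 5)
First, a point of comparison: the paper does not prove Theorem \ref{thm:lltcombe} at all. It is quoted as a known result of Alexandersson and Sulzgruber (\cite[Corollary 2.10]{lltcombe}, conjectured in \cite[Conjecture 10]{lltunicellepos}) and used as a black box in the proof of Theorem \ref{thm:foresttriples}, so there is no in-paper argument to measure your proposal against; you are attempting to reprove a cited theorem whose original proof is itself a substantial piece of work.

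As a standalone proof, your proposal has a genuine gap. The opening reduction is correct: expanding $(q+1)^{\text{asc}(\kappa)}$ binomially and exchanging the order of summation reduces the theorem to the identity $\sum_{|\theta|=k}N_\theta=\sum_{|\theta|=k}e_{\lambda(\theta)}$ for each $k$, and you are right that this cannot hold term by term. But that identity \emph{is} the content of the theorem, and you do not prove it; you describe two strategies and explicitly defer ``essentially all the work.'' Moreover, the mechanism you sketch is internally inconsistent: you ask for a bijection $\Phi$ that preserves $|\theta|$, yet the local move you propose is to ``toggle exactly that edge in or out of $\theta$,'' which changes $|\theta|$ by one and hence the power of $q$. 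A single-edge toggle can only be the engine of a sign-reversing involution, but the right-hand side of \eqref{eq:lltformula} has no signs to cancel, so this cannot be repaired by simply declaring the toggle to be the bijection. The inductive alternative (deleting the largest vertex) is likewise only named, not carried out: the generating function over the colour of vertex $n$ does not obviously factor as a ``colour-insertion operator'' compatible with how $\lambda(\theta)$ changes when edges at $n$ are adjoined, and verifying this compatibility is again exactly the difficulty. The proposal therefore identifies the correct target identity but supplies no proof of it.
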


Therefore, we get an expression for $X_G(\bm x;q)$ by replacing $q$ by $(q-1)$ and applying plethystic substitution to the right hand side of \eqref{eq:lltformula}. We will use the following identities.

\begin{lemma}\hspace{0pt}\label{lem:plethysmidentities}
\begin{enumerate}
\item We have $e_n[(q-1)\bm x]=\sum_{k=0}^ne_k[q\bm x]e_{n-k}[-\bm x]$.
\item We have $e_n[-\bm x]=(-1)^nh_n$, where $h_n=\sum_{i_1\leq \cdots\leq i_n}x_{i_1}\cdots x_{i_n}$ is the \emph{complete homogeneous symmetric function}.
\item We have $h_n=\sum_{\alpha\vDash n}(-1)^{n-\ell(\alpha)}e_{\text{sort}(\alpha)}$.
\item We have \begin{equation}
\frac{e_n[(q-1)\bm x]}{q-1}=\sum_{\alpha\vDash n}(-1)^{\ell(\alpha)-1}[\alpha_1]_qe_{\text{sort}(\alpha)}.
\end{equation}
\end{enumerate}
\end{lemma}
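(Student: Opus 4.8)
The plan is to prove the four identities in sequence, since (4) follows by combining (1), (2), and (3). Each of (1)--(3) is a standard symmetric-function identity, so the proof is mostly bookkeeping with plethysm, but I would want to be careful about sign conventions.

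\textbf{Identity (1).} I would use the generating function for elementary symmetric functions: $\sum_{n\geq 0}e_n t^n = \prod_i (1+x_i t)$, often written $E(t)=\exp\bigl(\sum_{k\geq 1}(-1)^{k-1}p_k t^k/k\bigr)$. Plethysm is characterized by its action on power sums, and plethystic substitution at $(q-1)\bm x$ satisfies $p_k[(q-1)\bm x]=(q^k-1)p_k=p_k[q\bm x]+p_k[-\bm x]$, where $p_k[q\bm x]=q^k p_k$ and $p_k[-\bm x]=-p_k$. Since $f\mapsto f[(q-1)\bm x]$ is an algebra homomorphism and the generating function $\sum_n e_n t^n$ has the multiplicative form $\exp(\sum_k (-1)^{k-1}p_k t^k/k)$, additivity of the $p_k$ under the substitution gives $\sum_n e_n[(q-1)\bm x]t^n = \bigl(\sum_k e_k[q\bm x]t^k\bigr)\bigl(\sum_j e_j[-\bm x]t^j\bigr)$; extracting the coefficient of $t^n$ yields (1). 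Alternatively this is just the standard fact $e_n[A+B]=\sum_k e_k[A]e_{n-k}[B]$ for the coproduct, applied with $A=q\bm x$, $B=-\bm x$.

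\textbf{Identities (2) and (3).} For (2), $e_n[-\bm x]=(-1)^n h_n$ is the classical involution $\omega$-type identity; it follows from $\sum_n e_n[-\bm x]t^n=\exp(-\sum_k(-1)^{k-1}p_k t^k/k)=\exp(\sum_k p_k(-t)^k/k)\cdot(\text{sign check})$, more cleanly from $\prod_i(1+x_i t)^{-1}=\sum_n(-1)^n h_n t^n$ combined with the fact that $-\bm x$ plethystically inverts the $E$ series to the $H$ series. For (3), $h_n=\sum_{\alpha\vDash n}(-1)^{n-\ell(\alpha)}e_{\text{sort}(\alpha)}$ is the dual Jacobi--Trudi / Newton-type identity expressing $h_n$ in the $e$-basis; I would cite it as standard or derive it from $H(t)E(-t)=1$ by extracting coefficients, noting that a composition $\alpha=\alpha_1\cdots\alpha_\ell\vDash n$ contributes $e_{\alpha_1}\cdots e_{\alpha_\ell}$ with sign $(-1)^{n-\ell}$.

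\textbf{Identity (4).} Starting from (1), write $e_n[(q-1)\bm x]=\sum_{k=0}^n e_k[q\bm x]e_{n-k}[-\bm x]$. Use $e_k[q\bm x]=q^k e_k$ and (2) to get $e_n[(q-1)\bm x]=\sum_{k=0}^n q^k e_k\,(-1)^{n-k}h_{n-k}$. Now substitute (3) for each $h_{n-k}$: $h_{n-k}=\sum_{\beta\vDash n-k}(-1)^{n-k-\ell(\beta)}e_{\text{sort}(\beta)}$. Every composition $\alpha\vDash n$ with first part $\alpha_1=k$ then corresponds to the pair $(k,\beta)$ where $\beta=\alpha_2\cdots\alpha_{\ell(\alpha)}$ (and $\beta$ is empty when $\alpha$ is the single part $n$, contributing $h_0=1$), so $e_k e_{\text{sort}(\beta)}=e_{\text{sort}(\alpha)}$, the $q$-power is $q^{\alpha_1}$, and the sign is $(-1)^{n-k}(-1)^{n-k-\ell(\beta)}=(-1)^{-\ell(\beta)}=(-1)^{\ell(\alpha)-1}$. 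Dividing by $q-1$, I pair the $k=0$ term (which is $(-1)^{\ell(\alpha)-1}e_{\text{sort}(\alpha)}$ with $\alpha_1=0$... ) — more carefully: collecting all $\alpha$ with a fixed value of $\text{sort}(\alpha)$ and summing $q^{\alpha_1}$ over the allowed first parts, the telescoping produces $[\alpha_1]_q$. The cleanest route: group the double sum as $\sum_{\alpha\vDash n}(-1)^{\ell(\alpha)-1}(q^{\alpha_1}-1)/(q-1)\cdot e_{\text{sort}(\alpha)}$ after observing that the $e_{n-k}[-\bm x]$ expansion combined with the $q^k$ weighting reorganizes exactly into $\frac{q^{\alpha_1}-1}{q-1}=[\alpha_1]_q$ once we account for the $k=0$ contribution $(-1)^n h_n/(q-1)$ cancelling against part of the sum; I expect this final regrouping to be the one spot needing genuine care with the bookkeeping, though it is ultimately routine.

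\textbf{Main obstacle.} No step is deep; the only real risk is sign and convention errors in the plethystic identities (2) and (3) and in tracking how the division by $q-1$ interacts with the $k=0$ term of (1) to produce $[\alpha_1]_q$ rather than $[\alpha_1]_q - 1$ or similar. I would double-check (4) against the $n=1$ and $n=2$ cases ($e_1[(q-1)\bm x]/(q-1)=e_1$, matching $[1]_q e_1$; and $e_2[(q-1)\bm x]/(q-1)=[2]_q e_2 - [1]_q e_{11} = (1+q)e_2 - e_{11}$) before finalizing.
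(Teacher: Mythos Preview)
Your approach is correct and essentially identical to the paper's: parts (1)--(3) are cited as standard, and for (4) the paper proceeds exactly as you describe, separating the $k=0$ summand from the $k\geq 1$ summands so that prepending $k$ to $\beta\vDash n-k$ gives $\alpha\vDash n$ with $\alpha_1=k$, yielding $\sum_{\alpha\vDash n}(-1)^{\ell(\alpha)-1}(q^{\alpha_1}-1)e_{\text{sort}(\alpha)}$ before dividing by $q-1$. Your hesitation about the $k=0$ term is unwarranted: that term contributes $(-1)^n h_n=-\sum_{\alpha\vDash n}(-1)^{\ell(\alpha)-1}e_{\text{sort}(\alpha)}$, which is precisely the ``$-1$'' in $q^{\alpha_1}-1$, so the bookkeeping closes cleanly.
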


\begin{proof} The first three parts are \cite[Equation 1.63]{qtcat}, \cite[Page 448]{enum2}, and \cite[Theorem 2.18]{countingsym}. To prove the fourth, we use the others and we associate a composition of $(n-k)$ to a composition of $n$ by prepending a $k$ to find that 
\begin{align}
\frac{e_n[(q-1)\bm x]}{q-1}&=\frac 1{q-1}\sum_{k=0}^ne_k[q\bm x]e_{n-k}[-\bm x]=\frac 1{q-1}\sum_{k=0}^nq^ke_k(-1)^{n-k}h_{n-k}\\\nonumber&=\frac 1{q-1}\sum_{k=0}^nq^ke_k(-1)^{n-k}\left(\sum_{\alpha\vDash n-k}(-1)^{n-k-\ell(\alpha)}e_{\text{sort}(\alpha)}\right)\\\nonumber&=\frac 1{q-1}\left(\sum_{k=1}^n\sum_{\alpha\vDash n-k}q^k(-1)^{\ell(\alpha)}e_ke_{\text{sort}(\alpha)}-\sum_{\alpha\vDash n}(-1)^{\ell(\alpha)-1}e_{\text{sort}(\alpha)}\right)\\\nonumber&=\sum_{\alpha\vDash n}(-1)^{\ell(\alpha)-1}\frac{q^{\alpha_1}-1}{q-1}e_{\text{sort}(\alpha)}=\sum_{\alpha\vDash n}(-1)^{\ell(\alpha)-1}[\alpha_1]_qe_{\text{sort}(\alpha)}.
\end{align}
\end{proof}

Now we are ready to prove Theorem \ref{thm:foresttriples}.

\begin{proof}[Proof of Theorem \ref{thm:foresttriples}. ] We consider which $\theta\subseteq E(G)$ have $\pi(\theta)=\mathcal A$ for a fixed set partition $\mathcal A=\{A_1,\ldots,A_m\}$ of $[n]$ with $\min(A_1)<\cdots<\min(A_m)$. For each of the $(n-m)$ vertices $i$ that are not the minimum of their block, say $A_j$, $\theta$ must have at least one of the $b_i(A_j)$ edges of $B(i)=\{\{i',i\}\in E(G): i',i\in A_j, i'<i\}$ joining $i$ to a smaller vertex in the same block. Additionally, $\theta$ may optionally have any number of edges in \begin{equation}I(\mathcal A)=\{\{u,v\}\in E(G): u<v, u\in A_i, v\in A_j, i>j\}\end{equation} that join a smaller vertex in a later block to a larger vertex in an earlier block, because these edges do not affect $\pi(\theta)$. Therefore, using Lemma \ref{lem:treelistbijection} and Lemma \ref{lem:sumovertreelists}, we have
\begin{align}
\sum_{\substack{\theta\subseteq E(G)\\\pi(\theta)=\mathcal A}}\frac{(q-1)^{|\theta|}}{(q-1)^{n-m}}&=\left(\prod_{j=1}^m\sum_{\substack{i\in A_j\\i>\min(A_j)}}\frac 1{q-1}\sum_{\emptyset\neq S_i\subseteq B(i)}(q-1)^{|S_i|}\right)\sum_{S\subseteq I(\mathcal A)}(q-1)^{|S|}\\\nonumber&=q^{|I(A)|}\prod_{j=1}^m\prod_{\substack{i\in A_j\\i>\min(A_j)}}[b_i(A_j)]_q=q^{|I(A)|}\prod_{j=1}^m\sum_{\substack{\sigma^{(j)}\in S_{A_j}\\\sigma^{(j)}\text{ tree list of }G}}q^{\text{inv}_G(\sigma^{(j)})}\\\nonumber&=\sum_{\substack{T_1,\ldots,T_m\text{ dec. trees }\\V(T_j)=A_j}}q^{\text{inv}_G(\text{list}(T_1)\cdots\text{list}(T_m))},
\end{align}
where the factor of $q^{|I(\mathcal A)|}$ accounts for $G$-inversions of $\sigma=\text{list}(T_1)\cdots\text{list}(T_m)$ occurring between vertices in different blocks. Now using Proposition \ref{prop:plethysmrelation} and Theorem \ref{thm:lltcombe}, and separating $\theta\subseteq E(G)$ by the set partition $\pi(\theta)=\mathcal A=\{A_1,\ldots,A_m\}$, we have
\begin{align*}
X_G(\bm x;q)&=\sum_{\theta\subseteq E(G)}\frac{(q-1)^{|\theta|}e_{\lambda(\theta)}[(q-1)\bm x]}{(q-1)^n}=\sum_{\mathcal A}\left(\sum_{\substack{\theta\subseteq E(G)\\\pi(\theta)=\mathcal A}}\frac{(q-1)^{|\theta|}}{(q-1)^{n-m}}\right)\prod_{j=1}^m\frac{e_{|A_j|}[(q-1)\bm x]}{q-1}\\\nonumber&=\sum_{\mathcal A}\sum_{\substack{T_1,\ldots,T_m\text{ dec. trees }\\V(T_j)=A_j}}q^{\text{inv}_G(\sigma)}\prod_{j=1}^m\sum_{\alpha^{(j)}\vDash |A_j|}(-1)^{\ell(\alpha^{(j)})-1}[\alpha^{(j)}_1]_qe_{\text{sort}(\alpha^{(j)})}
\\\nonumber&=\sum_{\substack{T_1\ldots,T_m\text{ dec. trees }\\V(T_1)\sqcup\cdots\sqcup V(T_m)=[n]\\\alpha^{(1)}\vDash|V(T_1)|,\ldots,\alpha^{(m)}\vDash|V(T_m)|\\1\leq r_1\leq\alpha^{(1)}_1,\ldots,1\leq r_m\leq\alpha^{(m)}_1}}(-1)^{\sum_{j=1}^m(\ell(\alpha^{(j)})-1)}q^{\text{inv}_G(\sigma)}q^{\sum_{j=1}^m(r_j-1)}e_{\text{sort}(\alpha^{(1)}\cdots\alpha^{(m)})}\\\nonumber&=\sum_{\mathcal F\in\text{FT}(G)}\text{sign}(\mathcal F)q^{\text{weight}(\mathcal F)}e_{\text{type}(\mathcal F)}.
\end{align*}

\end{proof}

\section{Complete chains}\label{section:kchains}
In this section, we prove a combinatorial $e$-expansion of the chromatic quasisymmetric function of any $K$-chain. We begin by setting some notation.

\begin{definition}
We define the \emph{sum} of graphs $G_1=([n_1],E_1)$ and $G_2=([n_2],E_2)$ to be
\begin{equation}
G_1+G_2=([n_1+n_2-1],E_1\cup\{\{i+n_1-1,j+n_1-1\}: \ \{i,j\}\in E_2\}).
\end{equation}
Informally, vertex $n_1$ of $G_1$ and vertex $1$ of $G_2$ are glued together. For a composition $\gamma=\gamma_1\cdots\gamma_\ell$ with all parts at least $2$, we define the graph \begin{equation}K_\gamma=K_{\gamma_1}+\cdots+K_{\gamma_\ell},\end{equation} where $K_a$ is the complete graph on $[a]$. Graphs of the form $K_\gamma$ are called \emph{$K$-chains}. Note that $K$-chains are natural unit interval graphs. The bowtie graph from Figure \ref{fig:chromsymexamplebowtie} is $K_{33}$.
\end{definition}

This notation is due to Gebhard and Sagan, who proved \cite[Corollary 7.7]{chromnsym} that $X_{K_\gamma}(\bm x)$ is $e$-positive for every $\gamma$. We will prove that the chromatic \emph{quasisymmetric} function $X_{K_\gamma}(\bm x;q)$ is $e$-positive and $e$-unimodal by finding a sign-reversing involution on forest triples. We will in fact find an involution whose fixed points have all compositions of length one, and which preserves $\alpha^{(1)}_1(\mathcal F)$ and $r_1(\mathcal F)$, where $\mathcal T_1=(T_1,\alpha^{(1)}(\mathcal F),r_1(\mathcal F))$ is the tree triple of $\mathcal F$ with $1\in V(T_1)$, so we make the following definitions. Let $G$ be a natural unit interval graph.

\begin{definition}
A tree triple $\mathcal T=(T,\alpha,r)$ of $G$ is \emph{breakable} if $\ell(\alpha)\geq 2$. A forest triple $\mathcal F$ of $G$ is an \emph{atom} if all of its tree triples are not breakable.\end{definition}

Note that if $\mathcal T=(T,\alpha,r)$ is not breakable, then the composition $\alpha$ consists of the single part $|V(T)|$. In particular, if $\mathcal F$ is an atom, then $\text{sign}(\mathcal F)=(-1)^{\sum_{i=1}^m(1-1)}=+1$.

%\begin{definition} A forest triple $\mathcal F$ of $G$ is \emph{simple} if $r_1(\mathcal F)=1$. Let $\text{FT}_{\text{simple}}(G)$ denote the set of simple forest triples of $G$. A \emph{nice involution} for $G$ is a type-and-weight-preserving, sign-reversing involution $\varphi$ on $\text{FT}_{\text{simple}}(G)$ whose fixed points are all atoms and such that $\alpha^{(1)}_1(\varphi(\mathcal F))=\alpha^{(1)}_1(\mathcal F)$, in other words, it preserves the first part of the composition associated to the tree containing the vertex $1$.
%\end{definition}

\begin{definition} A forest triple $\mathcal F$ of $G$ is \emph{simple} if $r_1(\mathcal F)=1$. Let $\text{FT}_{\text{simple}}(G)$ denote the set of simple forest triples of $G$. A \emph{nice involution} for $G$ is a function $\varphi:\text{FT}_{\text{simple}}(G)\to\text{FT}_{\text{simple}}(G)$ with the following properties.
\begin{enumerate}
\item $\varphi$ is an \emph{involution}, meaning that $\varphi(\varphi(\mathcal F))=\mathcal F$ for all $\mathcal F\in\text{FT}_{\text{simple}}(G)$. In other words, $\varphi$ is its own inverse, so in particular it is a bijection.
\item $\varphi$ preserves type, weight, and $\alpha^{(1)}_1$, which is the first part of the composition associated to the tree containing the vertex $1$.
\item If $\mathcal F$ is a \emph{fixed point} of $\varphi$, meaning that $\varphi(\mathcal F)=\mathcal F$, then $\mathcal F$ is an atom. 
\item $\varphi$ is \emph{sign-reversing}, meaning that if $\varphi(\mathcal F)\neq\mathcal F$, then $\text{sign}(\varphi(\mathcal F))=-\text{sign}(\mathcal F)$. 
\end{enumerate}
\end{definition}

Note that a forest triple $\mathcal F'\in\text{FT}(G)$ arises from some simple forest triple $\mathcal F$ by choosing some $1\leq r_1(\mathcal F')\leq\alpha^{(1)}_1(\mathcal F)$. Therefore, if we can find a nice involution $\varphi$ for $G$, then we have
\begin{equation}
X_G(\bm x;q)=\sum_{\mathcal F\in\text{FT}_{\text{simple}}(G), \varphi(\mathcal F)=\mathcal F}[\alpha^{(1)}_1(\mathcal F)]_q \ q^{\text{weight}(\mathcal F)}e_{\text{type}(\mathcal F)},
\end{equation} thus proving that $X_G(\bm x;q)$ is $e$-positive. The following Lemma will help us describe forest triples of a sum of graphs.

\begin{lemma}\label{lem:treelistofkchain}
Let $T$ be a decreasing subtree of $G$ that contains a cut vertex $c$. Then the tree list $\sigma=\text{list}(T)$ contains all of the vertices of $T$ at most $c$ in some order, followed by all of the vertices of $T$ strictly greater than $c$ in some order.
\end{lemma}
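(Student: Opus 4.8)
The plan is to analyze the structure of the tree list $\text{list}(T)$ using the observation that every decreasing subtree has a unique path from its largest vertex down to any vertex, and that a cut vertex $c$ of $T$ separates $T$ into pieces. First I would recall that in the graph $G_1 + G_2$ (and more generally in a $K$-chain), the only way for a path to cross from the "small side" (vertices $\le c$) to the "large side" (vertices $> c$) is through $c$ itself: if $c$ is a cut vertex of $T$, then removing $c$ disconnects $T$, and I claim the components split cleanly into those with all vertices $\le c$ and those with all vertices $> c$. This is where the natural unit interval structure enters: any edge $\{u,v\}$ of $T$ with $u < c < v$ would, together with an edge incident to $c$, force (via \eqref{eq:uicondition}) an edge bypassing $c$, contradicting that $c$ is a cut vertex.

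Next I would track the order in which $\text{list}(T)$ reads vertices. The construction of $\text{list}(T)$ starts from $\min(V(T))$ and repeatedly reads the smallest unread vertex adjacent to an already-read vertex. The key point is: as long as there remains an unread vertex $\le c$ that is "reachable" (adjacent to something read), it will be chosen before any unread vertex $> c$, simply because it is smaller. So I would argue that $\text{list}(T)$ exhausts all vertices of $T$ that are $\le c$ before reading any vertex $> c$, \emph{provided} the set of vertices $\le c$ forms a connected subtree of $T$ that contains $\min(V(T))$ — which follows from the previous paragraph, since the components of $T \setminus \{c\}$ on the small side are each attached to $c$, so together with $c$ they form a subtree containing $1 = \min(V(T))$ (here using that $1 \le c$, which holds because $1$ is the smallest vertex overall and $T$ contains vertices $\le c$ iff it contains $1$... more carefully, $c$ being a cut vertex means both sides are nonempty, and the small side contains $\min(V(T))$).

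I expect the main obstacle to be making the separation argument fully rigorous: precisely, showing that no vertex $> c$ ever becomes available for reading until every vertex $\le c$ in $T$ has already been read. The subtlety is that availability of a large vertex could in principle happen through a chain of edges, so I need the cleanness of the split of $T \setminus \{c\}$ (no edge of $T$ straddles $c$ except through $c$) to guarantee that the only bridge to the large side is $c$, and that $c$ is itself $\le c$ hence read as part of the small side. Once that is established, a straightforward induction on the reading steps — at each step, if an unread vertex $\le c$ is available it is picked; the large side only opens up once $c$ is read and the small side is exhausted — completes the proof. I would then remark that the orders within each half are unconstrained beyond being valid (they are governed by the tree structure on each side), which is all the statement claims.
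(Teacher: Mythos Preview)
Your approach is correct and takes a genuinely different route from the paper's. You argue directly from the greedy definition of $\text{list}(T)$: since $c$ is a cut vertex of $G$, no edge of $G$ (hence of $T$) joins a vertex $<c$ to a vertex $>c$, so the vertices of $T$ that are $\le c$ form a connected subtree $T_{\le c}$ containing $\min(V(T))$; then at each reading step, as long as an unread vertex $\le c$ remains, one is available (by connectivity of $T_{\le c}$) and is chosen over any available vertex $>c$. The paper instead exploits the tree list characterization from Lemma~\ref{lem:treelistbijection}: if some $\sigma_i>c$ were immediately followed by $\sigma_{i+1}\le c$, the LR maxima condition forces $c$ to have appeared before position $i$, so $\sigma_{i+1}<c$, and then the descent $(\sigma_i,\sigma_{i+1})$ would require an edge straddling $c$, violating the descent condition. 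Your argument is more self-contained (it does not need the tree list machinery), while the paper's is a three-line contradiction once that machinery is in place. Two small cleanups for your write-up: you do not need \eqref{eq:uicondition} to rule out an edge $\{u,v\}$ with $u<c<v$---such an edge already contradicts $c$ being a cut vertex in the relevant sense (no edges across $c$); and your phrase ``the large side only opens up once $c$ is read and the small side is exhausted'' overstates things---large vertices may become available as soon as $c$ is read, possibly before the small side is finished, but this is harmless since any available small vertex is still smaller and hence chosen first.
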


\begin{proof}
If not, then there is some $i$ with $\sigma_i>c$ and $\sigma_{i+1}\leq c$. By the LR maxima condition, the cut vertex $c$ must appear before $\sigma_i$, but this means that $\sigma_{i+1}<c$ and $\sigma$ does not satisfy the descent condition.
\end{proof}

We will want a standard way of taking a permutation $\sigma$ and writing a new permutation whose first entry is $\min(\sigma)$. The following choice is not the only possibility, but it will work.

\begin{definition}
Let $\sigma=\sigma_1\cdots\sigma_h$ be a permutation of some set $A=\{a_1<\cdots<a_h\}\subseteq [n]$. Define $\text{indstart}(\sigma)$ to be the integer $1\leq r\leq h$ such that $\sigma_1=a_r$ and define the permutation
\begin{equation}
\text{startmin}(\sigma)=w_1w_2\cdots w_{r-1}(\sigma),
\end{equation}
where $w_i$ is the transposition that switches the positions of the entries $a_i$ and $a_{i+1}$ in $\sigma$. Note that $\text{startmin}(\sigma)$ starts with its minimum element $a_1$, and if the entries in $A$ are all adjacent in $G$, then we have
\begin{equation}\label{eq:invrebalance}
\text{inv}_G(\sigma)=\text{inv}_G(\text{startmin}(\sigma))+(\text{indstart}(\sigma)-1)
\end{equation} because each of the $(\text{indstart}(\sigma)-1)$ transpositions that are applied decreases the number of $G$-inversions by exactly one. Conversely, given a permutation $\sigma=\sigma_1\cdots \sigma_h$ of $A$ with $\sigma_1=a_1$ and an integer $1\leq r\leq h$, define the permutation 
\begin{equation}\text{startr}(\sigma,r)=w_{r-1}\cdots w_2w_1(\sigma).
\end{equation}
Note that $\text{startr}(\sigma,r)$ starts with the element $a_r$ and  $\text{startmin}(\text{startr}(\sigma,\text{indstart}(\sigma)))=\sigma$.
\end{definition}

\begin{example}
For $\sigma=61743$, we have $\text{indstart}(\sigma)=4$ and $\text{startmin}(\sigma)=13764$.
\end{example}

We also define useful breaking and joining maps on tree triples. For convenience, we may refer to a tree $T$ and the permutation $\sigma=\text{list}(T)$ interchangeably.

\begin{definition} \label{def:easy}
Let $\mathcal T=(T,\alpha,r)$ be a breakable tree triple of $G$ with all vertices of $T$ adjacent in $G$, let $\alpha\setminus\alpha_\ell$ be the composition $\alpha$ with its last part $\alpha_\ell$ removed, and divide $\sigma=\text{list}(T)$ into its head and tail,
\begin{equation}\label{eq:headtail}
\sigma^{\text{head}}=\sigma_1 \ \cdots \ \sigma_{|V(T)|-\alpha_\ell}\text{ and }\sigma^{\text{tail}}=\sigma_{|V(T)|-\alpha_\ell+1} \ \cdots \ \sigma_{|V(T)|}.
\end{equation}
Then we define the pair of tree triples $\text{easybreak}(\mathcal T)=(\mathcal S_1,\mathcal S_2)$, where 
\begin{equation}
\mathcal S_1=(\sigma^{\text{head}},\alpha\setminus\alpha_\ell,r)\text{ and }\mathcal S_2=(\text{startmin}(\sigma^{\text{tail}}),\alpha_\ell,\text{indstart}(\sigma^{\text{tail}})).
\end{equation}
Conversely, given tree triples $\mathcal S_1=(S_1,\alpha^{(1)},r_1)$ and $\mathcal S_2=(S_2,\alpha^{(2)},r_2)$ with $\ell(\alpha^{(2)})=1$,  $\min(V(S_1))<\min(V(S_2))$, and $V(S_1)\sqcup V(S_2)$ all adjacent in $G$, we define the tree triple
\begin{equation}
\text{easyjoin}(\mathcal S_1,\mathcal S_2)=\mathcal T=(\text{list}(S_1)\cdot\text{startr}(\text{list}(S_2),r_2),\alpha^{(1)}\cdot\alpha^{(2)},r_1).
\end{equation}
Note that $\text{easybreak}(\mathcal T)=(\mathcal S_1,\mathcal S_2)$ if and only if $\text{easyjoin}(\mathcal S_1,\mathcal S_2)=\mathcal T$, and by \eqref{eq:invrebalance}, we have \begin{equation}\label{eq:inveasymap} \text{inv}_G(\text{list}(T))=\text{inv}_G(\text{list}(S_1)\cdot\text{list}(S_2))+(r_2-1).\end{equation}
\end{definition}

We now show how we can use the $\text{easybreak}$ and $\text{easyjoin}$ maps to find a nice involution for the complete graph $K_n$. We have already seen in Example \ref{ex:complete} that $X_{K_n}(\bm x;q)=[n]_q!e_n$, but this proof will be instructive for when we prove the main result of this section. Some examples are given in Figure \ref{fig:completeinvolution}.

\begin{figure}
\caption{\label{fig:completeinvolution} Some simple forest triples $\mathcal F$ of $K_6$ and $\varphi(\mathcal F)$}
\begin{tikzpicture}
\draw (1.25,0) node (){$K_6=$};
\filldraw (2,0) circle (3pt) node[align=center,above] (1){1};
\filldraw (2.5,0.866) circle (3pt) node[align=center,above] (2){2};
\filldraw (2.5,-0.866) circle (3pt) node[align=center,below] (3){3};
\filldraw (3.5,0.866) circle (3pt) node[align=center,above] (4){4};
\filldraw (3.5,-0.866) circle (3pt) node[align=center,below] (5){5};
\filldraw (4,0) circle (3pt) node[align=center,above] (6){6};

\draw (2,0)--(4,0) (2,0) -- (2.5,0.866) -- (3.5,0.866) -- (4,0) -- (3.5,-0.866) -- (2.5,-0.866) -- (2,0) (2,0) -- (3.5,-0.866) -- (3.5,0.866) -- (2,0) (4,0) -- (2.5,0.866) -- (2.5,-0.866) -- (4,0) (2.5,0.866) -- (3.5,-0.866) (2.5,-0.866) -- (3.5,0.866);
\end{tikzpicture}
\begin{equation*}
\begin{tabular}{|c|c|c|c|}
\hline
$\mathcal F$&$\varphi(\mathcal F)$&$\mathcal F$&$\varphi(\mathcal F)$\\\hline
$(\textcolor{red}{123}\textcolor{blue}{456},\textcolor{red}3\textcolor{blue}3,1)$&$(\textcolor{red}{123},\textcolor{red}3,1),(\textcolor{blue}{456},\textcolor{blue}3,\textcolor{teal}1)$&$(\textcolor{red}{1362}\textcolor{blue}{54},\textcolor{red}{22}\textcolor{blue}2,1)$&$(\textcolor{red}{1362},\textcolor{red}{22},1),(\textcolor{blue}{45},\textcolor{blue}2,\textcolor{teal}2)$\\\hline
$(\textcolor{red}{123}\textcolor{blue}{546},\textcolor{red}3\textcolor{blue}3,1)$&$(\textcolor{red}{123},\textcolor{red}3,1),(\textcolor{blue}{456},\textcolor{blue}3,\textcolor{teal}2)$&$(\textcolor{gray}{13},\textcolor{gray}{2},\textcolor{gray}1),(\textcolor{red}{26}\textcolor{blue}{45},\textcolor{red}2\textcolor{blue}2,2)$&$(\textcolor{gray}{13},\textcolor{gray}{2},\textcolor{gray}1),(\textcolor{red}{26},\textcolor{red}2,2),(\textcolor{blue}{45},\textcolor{blue}2,\textcolor{teal}1)$\\\hline
$(\textcolor{red}{123}\textcolor{blue}{654},\textcolor{red}3\textcolor{blue}3,1)$&$(\textcolor{red}{123},\textcolor{red}3,1),(\textcolor{blue}{465},\textcolor{blue}3,\textcolor{teal}3)$&$(\textcolor{gray}{15},\textcolor{gray}2,\textcolor{gray}1),(\textcolor{red}{24}\textcolor{blue}{63},\textcolor{red}2\textcolor{blue}2,1)$&$(\textcolor{gray}{15},\textcolor{gray}2,\textcolor{gray}1),(\textcolor{red}{24},\textcolor{red}{2},1),(\textcolor{blue}{36},\textcolor{blue}2,\textcolor{teal}2)$\\\hline
\end{tabular}
\end{equation*}
\end{figure}
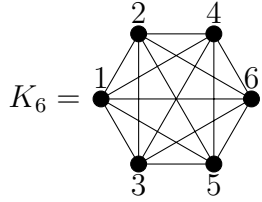

\begin{proposition}\label{prop:complete}
There is a nice involution $\varphi$ for the complete graph $K_n$. Furthermore, the fixed points of $\varphi$ are exactly
\begin{equation}
\text{Fix}(\varphi)=\{\mathcal F\in\text{FT}_{\text{simple}}(K_n): \ \mathcal F\text{ is an atom,} \ \mathcal F\text{ consists of a single tree triple}\}.
\end{equation} In particular, by Lemma \ref{lem:sumovertreelists}, the chromatic quasisymmetric function is
\begin{equation}
X_{K_n}(\bm x;q)=\sum_{\mathcal F\in\text{Fix}(\varphi)}[\alpha^{(1)}_1(\mathcal F)]_q \ q^{\text{weight}(\mathcal F)}e_{\text{type}(\mathcal F)}=[n]_q\sum_{\sigma\in S_n: \ \sigma_1=1}q^{\text{inv}_{K_n}(\sigma)}e_n=[n]_q!e_n.
\end{equation}
\end{proposition}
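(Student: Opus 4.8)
The plan is to build the nice involution $\varphi$ for $K_n$ directly from the $\text{easybreak}$ and $\text{easyjoin}$ maps of Definition \ref{def:easy}, exploiting the fact that in $K_n$ every vertex set is a clique, so these maps are always applicable. Given a simple forest triple $\mathcal F=(\mathcal T_1,\ldots,\mathcal T_m)$ of $K_n$, examine the rightmost tree triple $\mathcal T_m$. If $m=1$ and $\mathcal T_1$ is not breakable, declare $\mathcal F$ a fixed point. If $m=1$ and $\mathcal T_1$ is breakable, set $\varphi(\mathcal F)=(\mathcal S_1,\mathcal S_2)$, where $(\mathcal S_1,\mathcal S_2)=\text{easybreak}(\mathcal T_1)$. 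If $m\geq 2$ and $\mathcal T_m$ is breakable, set $\varphi(\mathcal F)=(\mathcal T_1,\ldots,\mathcal T_{m-1},\mathcal S_1,\mathcal S_2)$ with $(\mathcal S_1,\mathcal S_2)=\text{easybreak}(\mathcal T_m)$. If $m\geq 2$ and $\mathcal T_m$ is not breakable, set $\varphi(\mathcal F)=(\mathcal T_1,\ldots,\mathcal T_{m-2},\text{easyjoin}(\mathcal T_{m-1},\mathcal T_m))$; this is legitimate since $\ell(\alpha^{(m)})=1$, $\min(V(T_{m-1}))<\min(V(T_m))$, and $V(T_{m-1})\sqcup V(T_m)$ is a clique in $K_n$.

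First I would check well-definedness, i.e.\ that $\varphi(\mathcal F)$ is again a simple forest triple. The ordering of block minima survives $\text{easybreak}(\mathcal T_m)$ because $\text{list}(T_m)$ starts with $\min(V(T_m))$, so $\min(V(S_1))=\min(V(T_m))<\min(V(S_2))$, while $\min(V(T_{m-1}))<\min(V(T_m))$; the $\text{easyjoin}$ case is immediate. Since vertex $1$ always lies in $T_1$, and $\mathcal T_1$ is altered only when $m=1$ — by $\text{easybreak}$, which preserves its first composition part and leaves $r=r_1=1$ unchanged — or when $m=2$ and $\mathcal T_2$ is not breakable — by $\text{easyjoin}$, which keeps $r=r_1=1$ and first part $\alpha^{(1)}_1$ — the map stays inside $\text{FT}_{\text{simple}}(K_n)$ and preserves $\alpha^{(1)}_1$. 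It preserves type since $\text{easybreak}$ and $\text{easyjoin}$ only split or concatenate compositions, and it preserves weight by \eqref{eq:inveasymap}: absorbing a tail carrying a length-one composition removes an $r-1$ from $\sum_i(r_i-1)$ but creates exactly $r-1$ new $K_n$-inversions in the enlarged tree list, and $\text{easybreak}$ reverses this. The sign flips whenever $\mathcal F$ is modified because the number of tree triples changes by one (and a tree triple with a length-one composition has sign $+1$), so $\varphi$ is sign-reversing off its fixed points.

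Next I would verify $\varphi(\varphi(\mathcal F))=\mathcal F$ by walking through the four cases, using that $\text{easybreak}$ and $\text{easyjoin}$ are mutually inverse (Definition \ref{def:easy}). If $\mathcal F$ is modified, then $\varphi(\mathcal F)$ either ends in a pair $(\mathcal S_1,\mathcal S_2)$ with $\ell(\alpha(\mathcal S_2))=1$ — so a second application of $\varphi$ sees a non-breakable rightmost tree triple and performs $\text{easyjoin}(\mathcal S_1,\mathcal S_2)$, undoing the split — or ends in the breakable tree triple $\text{easyjoin}(\mathcal T_{m-1},\mathcal T_m)$ — so a second application performs $\text{easybreak}$, recovering $(\mathcal T_{m-1},\mathcal T_m)$. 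Either way $\varphi$ is an involution. Reading off the definition, $\varphi(\mathcal F)=\mathcal F$ exactly when $m=1$ and $\mathcal T_1$ is not breakable, which is precisely the condition that $\mathcal F$ is an atom consisting of a single tree triple; this is the asserted description of $\text{Fix}(\varphi)$.

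Finally, with a nice involution in hand, the displayed identity just before Lemma \ref{lem:treelistofkchain} gives $X_{K_n}(\bm x;q)=\sum_{\mathcal F\in\text{Fix}(\varphi)}[\alpha^{(1)}_1(\mathcal F)]_q\,q^{\text{weight}(\mathcal F)}e_{\text{type}(\mathcal F)}$. Each fixed point is a single tree triple $(T,(n),1)$ with $T$ a decreasing spanning tree of $K_n$, so $\alpha^{(1)}_1=n$, $\text{type}=(n)$, and $\text{weight}=\text{inv}_{K_n}(\text{list}(T))$; summing over $T$, then invoking Lemma \ref{lem:treelistbijection} (these trees correspond via $\text{list}$ to the permutations of $[n]$ starting with $1$) and Lemma \ref{lem:sumovertreelists} with $A=[n]$, where $b_i([n])=i-1$, yields $X_{K_n}(\bm x;q)=[n]_q[1]_q[2]_q\cdots[n-1]_q\,e_n=[n]_q!\,e_n$. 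I expect the only real work to be the bookkeeping in the well-definedness and involution checks — confirming that the block-minimum ordering and the weight survive every $\text{easybreak}$/$\text{easyjoin}$ — since the algebraic substance is already encapsulated in Definition \ref{def:easy} and the identities behind Theorem \ref{thm:foresttriples}.
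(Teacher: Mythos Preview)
Your proposal is correct and essentially identical to the paper's proof: both define $\varphi$ by applying $\text{easybreak}$ to $\mathcal T_m$ when it is breakable and $\text{easyjoin}$ to $(\mathcal T_{m-1},\mathcal T_m)$ otherwise, declaring $\mathcal F$ fixed precisely when $m=1$ and $\mathcal T_1$ is not breakable. You spell out a bit more of the bookkeeping (block-minimum ordering, preservation of $r_1=1$ and $\alpha^{(1)}_1$), but the construction, the invocation of \eqref{eq:inveasymap} for weight preservation, and the final enumeration via Lemmas \ref{lem:treelistbijection} and \ref{lem:sumovertreelists} all match the paper.
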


\begin{proof}
We build our nice involution $\varphi$ using the $\text{easybreak}$ and $\text{easyjoin}$ maps. Let \begin{equation}\mathcal F=(\mathcal T_1=(T_1,\alpha^{(1)},1),\ldots,\mathcal T_m=(T_m,\alpha^{(m)},r_m))\in\text{FT}_{\text{simple}}(K_n).\end{equation}
If $\mathcal T_m$ is breakable, with $\text{easybreak}(\mathcal T)=(\mathcal S_1,\mathcal S_2)$, then we define $\varphi(\mathcal F)$ by replacing $\mathcal T_m$ by $\mathcal S_1$ and $\mathcal S_2$, that is, $\varphi(\mathcal F)=(\mathcal T_1,\ldots,\mathcal T_{m-1},\mathcal S_1,\mathcal S_2)$. Note that by our definition in \eqref{eq:headtail}, the tree triples are in the correct order and weight is preserved by \eqref{eq:inveasymap}. We have that $\varphi$ reverses sign and preserves type and $\alpha^{(1)}_1$.\\

If $\mathcal T_m$ is not breakable and $m=1$, then $\mathcal F\in\text{Fix}(\varphi)$ and we perforce define $\varphi(\mathcal F)=\mathcal F$. If $\mathcal T_m$ is not breakable and $m\geq 2$, then we define $\varphi(\mathcal F)$ by replacing $\mathcal T_{m-1}$ and $\mathcal T_m$ by $\mathcal T=\text{easyjoin}(\mathcal T_{m-1},\mathcal T_m)$, that is,
$\varphi(\mathcal F)=(\mathcal T_1,\ldots,\mathcal T_{m-2},\mathcal T)$. Note that by our definition in \eqref{eq:headtail}, the tree triples are in the correct order and weight is preserved by \eqref{eq:inveasymap}. We have that $\varphi$ reverses sign and preserves type and $\alpha^{(1)}_1$. By construction, the map $\varphi$ is an involution with fixed points exactly $\text{Fix}(\varphi)$, as desired.

\end{proof}

We now define a notion of restricting a simple forest triple $\mathcal F$ of $G_1+G_2$ to one of $G_2$ and we define an invariant that describes how many ``segments'' of trees $\mathcal F$ has present in $G_1$. Some examples are given in Figure \ref{fig:restrictft}.

\begin{figure}
\caption{\label{fig:restrictft} Some simple forest triples $\mathcal F$ of $K_{64}$ and $\mathcal F\vert_{\geq 6}$}
\begin{tikzpicture}
\draw (1.25,0) node (){$K_{64}=$};
\filldraw (2,0) circle (3pt) node[align=center,above] (1){1};
\filldraw (2.5,0.866) circle (3pt) node[align=center,above] (2){2};
\filldraw (2.5,-0.866) circle (3pt) node[align=center,below] (3){3};
\filldraw (3.5,0.866) circle (3pt) node[align=center,above] (4){4};
\filldraw (3.5,-0.866) circle (3pt) node[align=center,below] (5){5};
\filldraw (4,0) circle (3pt) node[align=center,above] (6){6};
\filldraw (4.5,0.866) circle (3pt) node[align=center,above] (7){7};
\filldraw (4.5,-0.866) circle (3pt) node[align=center,below] (8){8};
\filldraw (5,0) circle (3pt) node[align=center,above] (9){9};

\draw (2,0)--(5,0) (2,0) -- (2.5,0.866) -- (3.5,0.866) -- (4,0) -- (3.5,-0.866) -- (2.5,-0.866) -- (2,0) (2,0) -- (3.5,-0.866) -- (3.5,0.866) -- (2,0) (4,0) -- (2.5,0.866) -- (2.5,-0.866) -- (4,0) (2.5,0.866) -- (3.5,-0.866) (2.5,-0.866) -- (3.5,0.866) (4,0) -- (4.5,0.866) -- (5,0) -- (4.5,-0.866) -- (4,0) (4.5,0.866) -- (4.5,-0.866);
\end{tikzpicture}
\begin{equation*}
\begin{tabular}{|c|c|c|c|}
\hline
$\mathcal F$&$\text{seg}_{[6]}(\mathcal F)$&$\mathcal T_i\vert_{\geq 6}$&$\mathcal F\vert_{\geq 6}$\\\hline
$(\textcolor{red}{12345}\textcolor{teal}6\textcolor{blue}{789},\textcolor{teal}9,1)$&$1$&$(\textcolor{teal}6\textcolor{blue}{789},\textcolor{teal}4,1)$&$(\textcolor{teal}1\textcolor{blue}{234},\textcolor{teal}4,1)$\\\hline
$(\textcolor{red}{123}\textcolor{teal}6\textcolor{red}{54}\textcolor{blue}{789},\textcolor{teal}7\textcolor{blue}2,1)$&$1$&$(\textcolor{teal}6\textcolor{blue}{789},\textcolor{teal}2\textcolor{blue}2,1)$&$(\textcolor{teal}1\textcolor{blue}{234},\textcolor{teal}2\textcolor{blue}2,1)$\\\hline
$(\textcolor{red}{13}\textcolor{teal}6\textcolor{red}{254}\textcolor{blue}8,\textcolor{red}5\textcolor{teal}2,1),(\textcolor{blue}{79},\textcolor{blue}2,2)$&$2$&$(\textcolor{teal}6\textcolor{blue}8,\textcolor{teal}2,1)$&$(\textcolor{teal}1\textcolor{blue}3,\textcolor{teal}2,1),(\textcolor{blue}{24},\textcolor{blue}2,2)$\\\hline
$(\textcolor{red}{143},\textcolor{red}3,1),(\textcolor{red}2\textcolor{teal}6\textcolor{red}{5}\textcolor{blue}{978},\textcolor{teal}3\textcolor{blue}3,3)$&$2$&$(\textcolor{teal}6\textcolor{blue}{978},\textcolor{teal}1\textcolor{blue}3,1)$&$(\textcolor{teal}1\textcolor{blue}{423},\textcolor{teal}1\textcolor{blue}3,1)$\\\hline
$(\textcolor{red}1\textcolor{teal}6\textcolor{red}{45}\textcolor{blue}{98},\textcolor{red}2\textcolor{teal}3\textcolor{blue}1,1),(\textcolor{red}{23},\textcolor{red}2,2),(\textcolor{blue}7,\textcolor{blue}1,1)$&$3$&$(\textcolor{teal}6\textcolor{blue}{98},\textcolor{teal}2\textcolor{blue}1,1)$&$(\textcolor{teal}1\textcolor{blue}{43},\textcolor{teal}2\textcolor{blue}1,1),(\textcolor{blue}2,\textcolor{blue}1,1)$\\\hline
\end{tabular}
\end{equation*}
\end{figure}
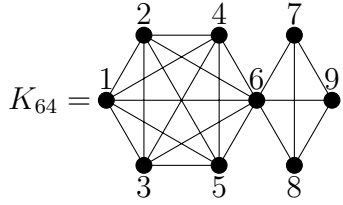

\begin{definition}
Let $G_1=([n_1],E_1)$ and $G_2=([n_2],E_2)$ be natural unit interval graphs and let $\mathcal F=(\mathcal T_1=(T_1,\alpha^{(1)},r_1=1),\ldots,\mathcal T_m=(T_m,\alpha^{(m)},r_m))$ be a simple forest triple of $G=G_1+G_2$. Let $i$ be such that $n_1\in V(T_i)$, let $\sigma=\text{list}(T_i)$, let $k=|V(T_i)\cap [n_1]|$, and noting that $k\leq |V(T_i)|=|\alpha^{(i)}|$, we define $\text{seg}_{[n_1]}(\mathcal T_i)$ to be the number $j$ such that
\begin{equation}
\alpha^{(i)}_1+\cdots+\alpha^{(i)}_{j-1}<k\leq\alpha^{(i)}_1+\cdots+\alpha^{(i)}_j.
\end{equation}
Letting $i'\geq i$ be maximal with $V(T_{i'})\cap [n_1]\neq\emptyset$, we say that the tree triples $\mathcal T_1,\ldots,\mathcal T_{i'}$ are \emph{present in $[n_1]$}, we define $\mathcal F\vert_{[n_1]}=(\mathcal T_1,\ldots,\mathcal T_{i'})$, and we define the integer
\begin{equation}\text{seg}_{[n_1]}(\mathcal F)=\text{seg}_{[n_1]}(\mathcal T_i)+\sum_{1\leq t\leq i', \ t\neq i}\ell(\alpha^{(t)}).\end{equation}
We now define the tree triple of $G$
\begin{equation}
\mathcal T_i\vert_{\geq n_1}=(n_1 \ \sigma_{k+1} \ \cdots \ \sigma_{\ell(\sigma)}, (\alpha^{(i)}_1+\cdots+\alpha^{(i)}_j-k+1)\alpha^{(i)}_{j+1}\cdots\alpha^{(i)}_\ell, 1)
\end{equation} and the simple forest triple of $G_2$
\begin{equation}
\mathcal F\vert_{\geq n_1}=(\mathcal T_i\vert_{\geq n_1}-(n_1-1),\mathcal T_{i'+1}-(n_1-1),\ldots,\mathcal T_m-(n_1-1)),
\end{equation}
where $\mathcal T-x$ denotes the tree triple obtained by subtracting $x$ from each entry of $\text{list}(T)$.\end{definition}

Note that by Lemma \ref{lem:treelistofkchain}, we have $\sigma_t\leq n_1$ for $1\leq t\leq k$ and $\sigma_t>n_1$ for $t>k$, so the permutation $n_1 \ \sigma_{k+1} \ \cdots \ \sigma_{\ell(\sigma)}$ starts with its minimum element and its descents and successive LR maxima appear in $\sigma$, so it is indeed a tree list of $G$ and $\mathcal F\vert_{\geq n_1}\in\text{FT}_{\text{simple}}(G_2)$.\\

We now prove the key result that will let us deduce $e$-positivity for $K$-chains by inductively adding complete graphs one at a time. The argument is identical if we add an \emph{almost-complete graph}, where a single edge is missing, so we prove both of these cases together. For $\epsilon\in\{0,1\}$, let $K_a^\epsilon$ denote the complete graph on $[a]$ if $\epsilon=0$ and the almost-complete graph on $[a]$, with the edge $(1,a)$ removed, if $\epsilon=1$. Note that every $\sigma$ with $\sigma_1=\min(\sigma)$ is a tree list for $K_a$, and is a tree list for $K_a^{(1)}$ unless $\sigma_1=1$ and $\sigma_2=a$.

\begin{theorem}\label{thm:key} Let $\varphi'$ be a nice involution for a natural unit interval graph $G'$. Then there is a nice involution $\varphi$ for $G=K_a^\epsilon+G'$. Furthermore, the fixed points of $\varphi$ are exactly
\begin{align}
\text{Fix}(\varphi)=\{\mathcal F\in\text{FT}&_{\text{simple}}(G): \ \mathcal F\text{ is an atom, }\mathcal F\vert_{\geq a}\text{ is fixed under }\varphi', \ \text{seg}_{[a]}(\mathcal F)\leq 2,\\\nonumber&\text{ if }1\notin V(T_i),\text{ then }r_i\geq a-\epsilon,\text{ if }1\in V(T_i),\text{ then }|V(T_i)|\geq a\},
\end{align}where $\mathcal T_i=(T_i,\alpha^{(i)},r_i)$ is the tree triple of $\mathcal F$ with $a\in V(T_i)$.
\end{theorem}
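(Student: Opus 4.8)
The plan is to build $\varphi$ by a two-tier rule that combines $\varphi'$ with a $K_a^\epsilon$-version of the break/join involution from the proof of Proposition~\ref{prop:complete}. The starting point is that a simple forest triple $\mathcal F$ of $G$ is completely recoverable from its restriction $\mathcal F\vert_{\geq a}\in\text{FT}_{\text{simple}}(G')$ together with its ``$[a]$-part'': since $a$ is a cut vertex of $G$, Lemma~\ref{lem:treelistofkchain} shows that every tree of $\mathcal F$ other than $T_i$ lies entirely in $[a-1]$ or entirely above $a$, while $T_i$ reads its $[a]$-vertices first; thus $\mathcal F$ is encoded by the tree triples $\mathcal T_t$ with $V(T_t)\subseteq[a-1]$, the truncation of $\mathcal T_i$ to $T_i\cap[a]$, the above-$a$ tail of $T_i$, and the blocks $\alpha^{(i)}_1,\dots,\alpha^{(i)}_j$ recording how $\alpha^{(i)}$ splits across $a$, where $j=\text{seg}_{[a]}(\mathcal T_i)$. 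The one vertex $a$ shared by the two sides is what produces a ``$+1$'' in the first block of the leading tree of $\mathcal F\vert_{\geq a}$. The first task is to make this gluing an explicit bijection inverse to $\mathcal F\mapsto\mathcal F\vert_{\geq a}$ once the $[a]$-part is fixed.

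\emph{Case A: $\mathcal F\vert_{\geq a}$ is not fixed by $\varphi'$.} Define $\varphi(\mathcal F)$ by replacing $\mathcal F\vert_{\geq a}$ with $\varphi'(\mathcal F\vert_{\geq a})$ and regluing it onto the unaltered $[a]$-part of $\mathcal F$. This is well defined because $\varphi'$ preserves $\alpha^{(1)}_1$, hence the size $|V(T_i)|-k+1$ of the block of $\mathcal F\vert_{\geq a}$ attached at $a$ (writing $k=|V(T_i)\cap[a]|$), so the ``$+1$'' reconstruction still goes through; since $\varphi'$ is an involution and the $[a]$-part is untouched, $\varphi\circ\varphi=\text{id}$ here and $\varphi(\mathcal F)\vert_{\geq a}=\varphi'(\mathcal F\vert_{\geq a})$ is again non-fixed, so we stay in Case~A. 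It preserves type because $\varphi'$ fixes the multiset of parts above $a$ and the straddling-block sizes depend only on $\alpha^{(1)}_1(\mathcal F\vert_{\geq a})$ and $k$; it preserves $\alpha^{(1)}_1(\mathcal F)$ since the tree containing vertex $1$ is either disjoint from the above-$a$ region, or has its first block inside $[a]$ (when $j\geq 2$), or equals the straddling block (when $j=1$), and in each case that datum is untouched by $\varphi'$; it preserves weight by splitting $\text{inv}_G(\sigma)$ across $a$ via Lemma~\ref{lem:treelistofkchain} and \eqref{eq:invrebalance}, and using that $\varphi'$ preserves the weight of $\mathcal F\vert_{\geq a}$. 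Finally $\varphi$ is sign-reversing here because it changes only compositions of trees meeting the above-$a$ region and $\varphi'$ is sign-reversing.

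\emph{Case B: $\mathcal F\vert_{\geq a}$ is fixed by $\varphi'$, hence an atom.} Then the above-$a$ portion of $\mathcal F$ is a disjoint union of size-one-composition tree triples, and the block of $\mathcal T_i$ straddling $a$ is its last block $\alpha^{(i)}_\ell$ (with $\ell=\ell(\alpha^{(i)})$ and $\alpha^{(i)}_1+\cdots+\alpha^{(i)}_{\ell-1}<k$). Form the forest triple $\widehat{\mathcal F}\in\text{FT}_{\text{simple}}(K_a^\epsilon)$ by truncating $\mathcal F$ to $[a]$, namely keeping $\mathcal T_1,\dots,\mathcal T_{i-1},\mathcal T_{i+1},\dots,\mathcal T_{i'}$ and replacing $\mathcal T_i$ by $T_i\cap[a]$ with last block shortened to $k-(\alpha^{(i)}_1+\cdots+\alpha^{(i)}_{\ell-1})$; one checks $T_i\cap[a]$ is again a decreasing subtree and, when $\epsilon=1$, that it cannot use the absent edge $\{1,a\}$, so $\widehat{\mathcal F}$ is legitimate. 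Run on $\widehat{\mathcal F}$ the same right-to-left procedure as in Proposition~\ref{prop:complete}: $\text{easybreak}$ the last block of the rightmost breakable tree triple, or, failing that, join the rightmost joinable tree triple onto its predecessor; then undo the truncation by reattaching the above-$a$ tail of $T_i$ to whichever output tree triple contains $a$, merging blocks at $a$. Two features require the cut vertex: a break at (the truncation of) $\mathcal T_i$ must not touch the above-$a$ tail, which is automatic since its last block straddles $a$; and a join across the position of $a$ is allowed only when the relevant union of vertex sets remains a clique of $K_a^\epsilon$, which fails precisely when $\epsilon=1$ and the pair $\{1,a\}$ would be needed — this is the source of the $\epsilon$ in the statement. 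As in Proposition~\ref{prop:complete}, using \eqref{eq:inveasymap} and the $\text{startmin}/\text{startr}$ identities, this is a sign-reversing, type-, weight-, and $\alpha^{(1)}_1$-preserving involution that leaves $\mathcal F\vert_{\geq a}$ untouched (so it stays in Case~B) and is compatible with the regluing.

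Combining the two cases yields $\varphi$, and a fixed point is exactly a Case-B triple on which the $K_a^\epsilon$-procedure is idle: no tree triple of the $[a]$-part is breakable, so with $\mathcal F\vert_{\geq a}$ an atom $\mathcal F$ is an atom; and nothing is joinable, which unwinds to $\text{seg}_{[a]}(\mathcal F)\leq 2$ (a third present-in-$[a]$ tree triple would supply a clique to join) together with $r_i\geq a-\epsilon$ when $1\notin V(T_i)$ and $|V(T_i)|\geq a$ when $1\in V(T_i)$ (the two conditions preventing $\mathcal T_i$, respectively the tree containing vertex $1$, from absorbing its neighbour) — exactly the claimed set; the formula for $X_G(\bm x;q)$ then follows as in Proposition~\ref{prop:complete}. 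I expect the main obstacle to be the Case-B bookkeeping at the cut vertex: making the break/join rule precise enough that it is genuinely an involution which never perturbs the above-$a$ data, yields exactly the bound $\text{seg}_{[a]}\leq 2$ and the $r_i$-condition at fixed points, and correctly reflects the deleted edge $\{1,a\}$ when $\epsilon=1$. A secondary technicality is checking that the ``$+1$'' straddling-block adjustment is consistent with type- and $\alpha^{(1)}_1$-preservation in both cases and with the regluing in Case~B.
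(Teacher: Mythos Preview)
Your Case~A is essentially the paper's Case~1 and is fine: restrict, apply $\varphi'$, and reglue using that $\varphi'$ preserves $\alpha^{(1)}_1$.

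Case~B, however, has a genuine gap. Running the Proposition~\ref{prop:complete} procedure on the truncation $\widehat{\mathcal F}$ does not produce the fixed-point set in the statement. In Proposition~\ref{prop:complete} the last tree triple is \emph{always} joined to its predecessor when it is not breakable and $m\geq 2$; there is no mechanism that blocks a join when $r_i\geq a-\epsilon$ or when $|V(T_i)|\geq a$. So your procedure, as written, would leave fixed only the $\text{seg}_{[a]}(\mathcal F)=1$ atoms, not the $\text{seg}_{[a]}(\mathcal F)=2$ atoms with the extra $r_i$ and $|V(T_i)|$ constraints. You assert these constraints ``prevent absorbing the neighbour,'' but nothing in the rule you wrote down encodes them. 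There is also a weight-preservation issue you do not address: the tree $T_i$ containing $a$ need not be the \emph{last} tree present in $[a]$ (there may be trees $T_{i+1},\ldots,T_{i'}$ with $\min(V(T_{i+1}))>\min(V(T_i))$), so an easyjoin of $T_{i'}$ onto $T_{i'-1}$ may have to cross $T_i$ in the concatenated word $\sigma$, which alters $\text{inv}_G(\sigma)$ in a way \eqref{eq:inveasymap} does not track.

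The paper handles all of this by splitting your Case~B into three separate involutions. When $\text{seg}_{[a]}(\mathcal F)=2$ (Case~2) it defines a bespoke bijection $\text{break}_k$ with three subcases depending on the position of the entry $a$ in $\text{list}(T_1)$; it is precisely this construction that manufactures the conditions $r_i\leq a-\epsilon-1$ (hence $r_i\geq a-\epsilon$ on the fixed side) and $|V(T_i)|\geq a$, by encoding the position of $a$ into the new $r$-value. When $\text{seg}_{[a]}(\mathcal F)\geq 3$ the paper further splits by whether $\text{seg}_{[a]}(\mathcal T_i)\leq\text{seg}_{[a]}(\mathcal F)-2$ (Case~3: easybreak/easyjoin working \emph{around} $\mathcal T_i$, including an explicit ``move through $\mathcal T_i$'' trick to preserve weight when joining across it) or not (Case~4: a different map $\varphi_{\geq 3}$ that removes or inserts a block from the \emph{interior} of $\text{list}(T_1)$). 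None of this is recoverable from the $K_n$ procedure applied to a truncation; the constructions are genuinely new and the $r_i\geq a-\epsilon$ and $|V(T_i)|\geq a$ conditions are outputs of Case~2, not inputs you can impose by fiat.
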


%\begin{example}
%Let $G'$ be the one-vertex graph and let $G=K_n=K_n^{(0)}+G'$ be the complete graph. The only forest triple of $G'$ is $((1,1,1))$, so it must be fixed under $\varphi'$. If $\mathcal F\in\text{Fix}(\varphi)$ has $1\notin V(T_i)$, then $|V(T_i)|=\alpha^{(i)}_1\geq r_i\geq n$, so in fact $1\in V(T_i)$ and again $|V(T_i)|\geq n$, so $\mathcal F$ must consist of a single tree triple. Therefore by Lemma \ref{lem:sumovertreelists}, we once again have
%\begin{equation}
%X_G(\bm x;q)=[n]_q\sum_{\sigma\in S_n: \ \sigma_1=1}q^{\text{inv}_G(\sigma)}e_n=[n]_q[n-1]_q\cdots[2]_q[1]_qe_n=[n]_q!e_n
%\end{equation}
%\end{example}

\begin{proof} We perforce define $\varphi(\mathcal F)=\mathcal F$ for $\mathcal F\in\text{Fix}(\varphi)$, so we now build a sign-reversing involution with no fixed points on $\text{FT}_{\text{simple}}(G)\setminus\text{Fix}(\varphi)$ using several cases. Let 
\begin{equation}\mathcal F=(\mathcal T_1=(T_1,\alpha^{(1)},r_1),\ldots,\mathcal T_m=(T_m,\alpha^{(m)},r_m))\in\text{FT}_{\text{simple}}(G)\setminus\text{Fix}(\varphi),\end{equation} let $i$ be such that $a\in V(T_i)$, and let $i'\geq i$ be maximal with $V(T_{i'})\cap[a]\neq\emptyset$. Illustrative examples for $G=K_{64}=K_6+K_4$ will be provided in the figures.\\

\textbf{Case 1: $\mathcal F\vert_{\geq a}$ is not fixed under $\varphi'$.} We first define a sign-reversing involution $\varphi\vert_{\geq a}$ with no fixed points on those $\mathcal F$ for which $\varphi'(\mathcal F\vert_{\geq a})\neq\mathcal F\vert_{\geq a}$. The idea will be to restrict $\mathcal F$ to $G'$, apply the map $\varphi'$, and reattach to the rest of $\mathcal F$. Let $\sigma=\text{list}(T_i)$, $k=|V(T_i)\cap[a]|$, and $j=\text{seg}_{[a]}(\mathcal T_i)$. Apply the map $\varphi'$ to get $\varphi'(\mathcal F\vert_{\geq a})=\mathcal F'=(\mathcal T_1'=(T_1',\alpha^{(1)'},1),\mathcal T_2',\ldots,\mathcal T_{m'}')$ and let $\sigma'=\text{list}(T_1')$. We now define the tree triple of $G$
\begin{equation}
\mathcal T_i\vee\mathcal T_1'=(\sigma_1 \ \cdots \ \sigma_k \ (\sigma'_2+a-1) \ \cdots \ (\sigma'_{|V(T_1')|}+a-1), \alpha^{(i)}_1\cdots\alpha^{(i)}_j\alpha^{(1)'}_2\cdots\alpha^{(1)'}_\ell, r_i)
\end{equation} and we define $\varphi\vert_{\geq a}(\mathcal F)=\mathcal F\vee\mathcal F'$, where 
\begin{equation}
\mathcal F\vee\mathcal F'=(\mathcal T_1,\ldots,\mathcal T_{i-1},\mathcal T_i\vee\mathcal T_1',\mathcal T_{i+1},\ldots,\mathcal T_{i'},\mathcal T_2'+(a-1),\ldots,\mathcal T'_{m'}+(a-1)).
\end{equation}
The permutation $\sigma_1 \cdots \sigma_k \ (\sigma'_2+a-1) \cdots (\sigma'_{|V(T_1')|}+a-1)$ starts with its minimum element and its descents and successive LR maxima are those of $\sigma$ or translated ones in $\sigma'$ so it is a tree list of $G$. Because $\varphi'$ preserves the first part of the first composition, we have \begin{equation}\alpha^{(i)}_1+\cdots+\alpha^{(i)}_j+\alpha^{(1)'}_2+\cdots+\alpha^{(1)'}_\ell=k-1+\alpha^{(1)}_1(\mathcal F\vert_{\geq a})+\alpha^{(1)'}_2+\cdots+\alpha^{(1')}_\ell=k-1+|V(T_1')|
\end{equation} and we have $r_i\leq \alpha^{(i)}_1$, so $\mathcal T_i\vee\mathcal T_1'$ is indeed a tree triple of $G$ and $\varphi\vert_{\geq a}(\mathcal F)$ is a simple forest triple of $G$. Informally, $\mathcal F\vee\mathcal F'$ replaces the part of $\mathcal F$ in $G'$ by $\mathcal F'$. By construction, we have $\mathcal F\vee(\mathcal F\vert_{\geq a})=\mathcal F$ and we have $(\mathcal F\vee\mathcal F')\vert_{\geq a}=\mathcal F'$, which by hypothesis is not fixed under $\varphi'$. The map $\varphi\vert_{\geq a}$ reverses sign, preserves type, weight, and $\alpha^{(1)}_1$, has no fixed points, and is an involution because
\begin{equation}
\varphi\vert_{\geq a}(\varphi\vert_{\geq a}(\mathcal F))=\mathcal F\vee(\varphi'(\varphi'(\mathcal F\vert_{\geq a})))=\mathcal F\vee(\mathcal F\vert_{\geq a})=\mathcal F.
\end{equation}

\begin{figure}
\caption{\label{fig:varphiinduct} Some simple forest triples $\mathcal F$ of $K_{64}$ and $\varphi\vert_{\geq 6}(\mathcal F)$}

\begin{tikzpicture}
\draw (1.25,0) node (){$K_{64}=$};
\filldraw (2,0) circle (3pt) node[align=center,above] (1){1};
\filldraw (2.5,0.866) circle (3pt) node[align=center,above] (2){2};
\filldraw (2.5,-0.866) circle (3pt) node[align=center,below] (3){3};
\filldraw (3.5,0.866) circle (3pt) node[align=center,above] (4){4};
\filldraw (3.5,-0.866) circle (3pt) node[align=center,below] (5){5};
\filldraw (4,0) circle (3pt) node[align=center,above] (6){6};
\filldraw (4.5,0.866) circle (3pt) node[align=center,above] (7){7};
\filldraw (4.5,-0.866) circle (3pt) node[align=center,below] (8){8};
\filldraw (5,0) circle (3pt) node[align=center,above] (9){9};

\draw (2,0)--(5,0) (2,0) -- (2.5,0.866) -- (3.5,0.866) -- (4,0) -- (3.5,-0.866) -- (2.5,-0.866) -- (2,0) (2,0) -- (3.5,-0.866) -- (3.5,0.866) -- (2,0) (4,0) -- (2.5,0.866) -- (2.5,-0.866) -- (4,0) (2.5,0.866) -- (3.5,-0.866) (2.5,-0.866) -- (3.5,0.866) (4,0) -- (4.5,0.866) -- (5,0) -- (4.5,-0.866) -- (4,0) (4.5,0.866) -- (4.5,-0.866);
\end{tikzpicture}
\begin{equation*}
\begin{tabular}{|c|c|c|c|}
\hline
$\mathcal F$&$\mathcal F\vert_{\geq 6}$&$\mathcal F'$&$ \varphi\vert_{\geq 6}(\mathcal F)=\mathcal F\vee\mathcal F'$\\\hline
$(\textcolor{red}{123}\textcolor{teal}6\textcolor{red}{54}\textcolor{blue}{789},\textcolor{teal}7\textcolor{blue}2,1)$&$(\textcolor{teal}1\textcolor{blue}{234},\textcolor{teal}2\textcolor{blue}2,1)$&$(\textcolor{teal}1\textcolor{blue}2,\textcolor{teal}2,1),(\textcolor{blue}{34},\textcolor{blue}2,1)$&$(\textcolor{red}{123}\textcolor{teal}6\textcolor{red}{54}\textcolor{blue}7,\textcolor{teal}7,1),(\textcolor{blue}{89},\textcolor{blue}2,1)$\\\hline
$(\textcolor{red}{13}\textcolor{teal}6\textcolor{red}{254}\textcolor{blue}{897},\textcolor{red}4\textcolor{teal}3\textcolor{blue}2,1)$&$(\textcolor{teal}1\textcolor{blue}{342},\textcolor{teal}2\textcolor{blue}2,1)$&$(\textcolor{teal}1\textcolor{blue}3,\textcolor{teal}2,1),(\textcolor{blue}{24},\textcolor{blue}2,2)$&$(\textcolor{red}{13}\textcolor{teal}6\textcolor{red}{254}\textcolor{blue}8,\textcolor{red}4\textcolor{teal}3,1),(\textcolor{blue}{79},\textcolor{blue}2,2)$\\\hline
$(\textcolor{red}{154}\textcolor{teal}6\textcolor{red}{23}\textcolor{blue}{978},\textcolor{red}5\textcolor{teal}2\textcolor{blue}2,1)$&$(\textcolor{teal}1\textcolor{blue}{423},\textcolor{teal}2\textcolor{blue}2,1)$&$(\textcolor{teal}1\textcolor{blue}4,\textcolor{teal}2,1),(\textcolor{blue}{23},\textcolor{blue}2,1)$&$(\textcolor{red}{154}\textcolor{teal}6\textcolor{red}{23}\textcolor{blue}9,\textcolor{red}5\textcolor{teal}2,1),(\textcolor{blue}{78},\textcolor{blue}2,1)$\\\hline
$(\textcolor{red}1\textcolor{teal}6\textcolor{red}{3452}\textcolor{blue}{987},\textcolor{red}{23}\textcolor{teal}2\textcolor{blue}2,1)$&$(\textcolor{teal}1\textcolor{blue}{432},\textcolor{teal}2\textcolor{blue}2,1)$&$(\textcolor{teal}1\textcolor{blue}4,\textcolor{teal}2,1),(\textcolor{blue}{23},\textcolor{blue}2,2)$&$(\textcolor{red}1\textcolor{teal}6\textcolor{red}{3452}\textcolor{blue}9,\textcolor{red}{23}\textcolor{teal}2,1),(\textcolor{blue}{78},\textcolor{blue}2,2)$\\\hline
\end{tabular}\end{equation*}
\end{figure}

\textbf{Case 2: $\mathcal F\vert_{\geq a}$ is fixed under $\varphi'$ and $\text{seg}_{[a]}(\mathcal F)\leq 2$.} First note that if $\text{seg}_{[a]}(\mathcal F)=1$, then $i=1$, $\mathcal F$ is an atom, $[a]\subseteq V(T_1)$, and $\mathcal F\in\text{Fix}(\varphi)$, so we must have $\text{seg}_{[a]}(\mathcal F)=2$. We now define a bijection $\varphi_2$ between those $\mathcal F$ with $\text{seg}_{[a]}(\mathcal T_i)=2$, for which the only tree triple present in $[a]$ is of the form $\mathcal T_i=\mathcal T_1=(T_1,\alpha^{(1)}=\alpha^{(1)}_1\alpha^{(1)}_2,1)$, and those with $\text{seg}_{[a]}(\mathcal T_i)=1$, which have two tree triples present in $[a]$, which are both not breakable. Then we get a sign-reversing involution by either applying $\varphi_2$ or its inverse.\\

More specifically, for each $k$ we define a bijection
\begin{align}
\text{break}_k:&\{T\text{ dec. subtree of }G: \ [a]\subseteq V(T), \ k<|V(T)|<k+a\}\\\nonumber\to&\{(S_1,S_2,r_2): \ S_1,S_2\text{ dec. subtrees of }G,\ [a]\subseteq V(S_1)\sqcup V(S_2), \ 1\in V(S_1), \\\nonumber& \ \ |V(S_1)|<a, \ 1\leq r_2\leq |V(S_2)|=k,\text{ if }a\in V(S_2),\text{ then } r_2\leq a-\epsilon-1\},
\end{align} breaking a tree $T$ into $S_1$ and $S_2$, with $1\in V(S_1)$ and $|V(S_2)|=k$, such that \begin{equation}\label{eq:2segmentsinv} \text{inv}_G(\text{list}(T))=\text{inv}_G(\text{list}(S_1)\cdot\text{list}(S_2))+(r_2-1).\end{equation} 
Then we define $\varphi_2(\mathcal F)$ by taking $\text{break}_{\alpha^{(1)}_2}(T_1)=(S_1,S_2,r_2)$ and replacing $\mathcal T_1$ by the tree triples \begin{equation}\mathcal S_1=(S_1,\alpha^{(1)}_1,1)\text{ and }\mathcal S_2=(S_2,\alpha^{(1)}_2,r_2).\end{equation}
By construction, $\varphi_2$ will reverse sign, will preserve type and $\alpha^{(1)}_1$, will preserve weight by \eqref{eq:2segmentsinv}, and will have no fixed points. Let $T$ be a decreasing subtree of $G$ with $[a]\subseteq V(T)$ and $k<|V(T)|<k+a$, let $\sigma=\text{list}(T)$, and let $2\leq j\leq |V(T)|$ be such that $\sigma_j=a$. Note that if $j=2$, then we must have $\epsilon=0$. We will have three cases, depending on the position $j$ of the entry $a$ in $\sigma$. \\

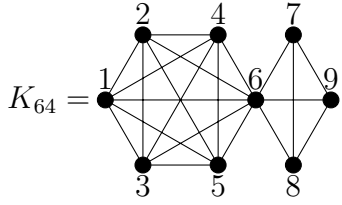
\begin{figure}
\caption{\label{fig:varphi2} Some simple forest triples $\mathcal F$ of $K_{64}$ and $\varphi_2(\mathcal F)$}

\begin{tikzpicture}
\draw (1.25,0) node (){$K_{64}=$};
\filldraw (2,0) circle (3pt) node[align=center,above] (1){1};
\filldraw (2.5,0.866) circle (3pt) node[align=center,above] (2){2};
\filldraw (2.5,-0.866) circle (3pt) node[align=center,below] (3){3};
\filldraw (3.5,0.866) circle (3pt) node[align=center,above] (4){4};
\filldraw (3.5,-0.866) circle (3pt) node[align=center,below] (5){5};
\filldraw (4,0) circle (3pt) node[align=center,above] (6){6};
\filldraw (4.5,0.866) circle (3pt) node[align=center,above] (7){7};
\filldraw (4.5,-0.866) circle (3pt) node[align=center,below] (8){8};
\filldraw (5,0) circle (3pt) node[align=center,above] (9){9};

\draw (2,0)--(5,0) (2,0) -- (2.5,0.866) -- (3.5,0.866) -- (4,0) -- (3.5,-0.866) -- (2.5,-0.866) -- (2,0) (2,0) -- (3.5,-0.866) -- (3.5,0.866) -- (2,0) (4,0) -- (2.5,0.866) -- (2.5,-0.866) -- (4,0) (2.5,0.866) -- (3.5,-0.866) (2.5,-0.866) -- (3.5,0.866) (4,0) -- (4.5,0.866) -- (5,0) -- (4.5,-0.866) -- (4,0) (4.5,0.866) -- (4.5,-0.866);
\end{tikzpicture}
\begin{equation*}
\begin{tabular}{|c|c|c|c|}
\hline$\mathcal F$&$\varphi_2(\mathcal F)$&$\mathcal F$&$\varphi_2(\mathcal F)$\\\hline
$(12345\textcolor{teal}6\textcolor{gray}{789},\textcolor{red}3\textcolor{blue}6,1)$&$(\textcolor{red}{123},\textcolor{red}3,1),(\textcolor{blue}{456789},\textcolor{blue}6,\textcolor{teal}1)$&$(12345\textcolor{teal}6\textcolor{gray}{789},\textcolor{red}5\textcolor{blue}4,1)$&$(\textcolor{red}{12345},\textcolor{red}5,1),(\textcolor{blue}{6789},\textcolor{blue}4,\textcolor{teal}1)$\\\hline
$(1324\textcolor{teal}65\textcolor{gray}{789},\textcolor{red}3\textcolor{blue}6,1)$&$(\textcolor{red}{132},\textcolor{red}3,1),(\textcolor{blue}{465789},\textcolor{blue}6,\textcolor{teal}2)$&$(15\textcolor{teal}6324\textcolor{gray}{789},\textcolor{red}5\textcolor{blue}4,1)$&$(\textcolor{red}{15324},\textcolor{red}5,1),(\textcolor{blue}{6789},\textcolor{blue}4,\textcolor{teal}4)$\\\hline
$(123\textcolor{teal}654\textcolor{gray}{789},\textcolor{red}3\textcolor{blue}6,1)$&$(\textcolor{red}{123},\textcolor{red}3,1),(\textcolor{blue}{465789},\textcolor{blue}6,\textcolor{teal}3)$&$(1\textcolor{teal}6\textcolor{orange}2\textcolor{blue}{345}\textcolor{gray}{789},\textcolor{red}5\textcolor{blue}4,1)$&$(\textcolor{red}{16789},\textcolor{red}5,1),(\textcolor{blue}{2345},\textcolor{blue}4,\textcolor{orange}1)$\\\hline
$(14\textcolor{teal}6532\textcolor{gray}{789},\textcolor{red}3\textcolor{blue}6,1)$&$(\textcolor{red}{145},\textcolor{red}3,1),(\textcolor{blue}{263789},\textcolor{blue}6,\textcolor{teal}4)$&$(1\textcolor{teal}6\textcolor{orange}3\textcolor{blue}{425}\textcolor{gray}{789},\textcolor{red}5\textcolor{blue}4,1)$&$(\textcolor{red}{16789},\textcolor{red}5,1),(\textcolor{blue}{2435},\textcolor{blue}4,\textcolor{orange}2)$\\\hline
$(1\textcolor{teal}65234\textcolor{gray}{789},\textcolor{red}3\textcolor{blue}6,1)$&$(\textcolor{red}{152},\textcolor{red}3,1),(\textcolor{blue}{346789},\textcolor{blue}6,\textcolor{teal}5)$&$(1\textcolor{teal}6\textcolor{orange}5\textcolor{blue}{432}\textcolor{gray}{789},\textcolor{red}5\textcolor{blue}4,1)$&$(\textcolor{red}{16789},\textcolor{red}5,1),(\textcolor{blue}{2543},\textcolor{blue}4,\textcolor{orange}4)$\\\hline
\end{tabular}
\end{equation*}
\end{figure}

If $j\geq|V(T)|-k+1$, then as in Definition \ref{def:easy}, we divide $\sigma$ into its head and tail,
\begin{equation}
\sigma^{\text{head}}=\sigma_1 \ \cdots \ \sigma_{|V(T)|-k}\text{ and }\sigma^{\text{tail}}=\sigma_{|V(T)|-k+1} \ \cdots \ \sigma_{|V(T)|},
\end{equation} and we define $(S_1,S_2,r_2)$ by 
\begin{equation}
\text{list}(S_1)=\sigma^{\text{head}}, \ \text{list}(S_2)=\text{startmin}(\sigma^{\text{tail}}),\text{ and }r_2=\text{indstart}(\sigma^{\text{tail}}).
\end{equation}
Because $|V(T)|-k<a$ and by Lemma \ref{lem:treelistofkchain}, $\sigma^{\text{head}}$ has all entries in $[a-1]$ so is a tree list of $G$, we have $\min(\sigma^{\text{tail}})\leq\sigma^{\text{tail}}_1\leq a$, so the only transpositions applied to $\sigma^{\text{tail}}$ switch entries at most $a$, which are all adjacent, and its descents and successive LR maxima appear in the tree list $\sigma$, so $\text{startmin}(\sigma^{\text{tail}})$ is a tree list of $G$, and \eqref{eq:2segmentsinv} follows from \eqref{eq:invrebalance}. Also note that $a\in V(S_2)$ and we have
\begin{equation}\label{eq:case21} 1\leq r_2\leq |V(S_2)\cap [a]|=a-(|V(T)|-k)\leq a-1\end{equation} with equality only if $k=|V(T)|-1$ and $\sigma_2=a$, meaning that $\epsilon=0$, so $r_2\leq a-\epsilon-1$. \\

If $a-k+1\leq j\leq|V(T)|-k$, then the entry $a$ would appear in $\sigma^{\text{head}}$ and $\sigma^{\text{tail}}$ would have entries less than $a$ and greater than $a$ and therefore would have no hope of satisfying the LR maximum condition. Therefore, we move the entry $a$ from position $j$ to the start of the tail, at position $|V(T)|-k+1$, and define 
\begin{equation}\sigma^{\text{head}'}=\sigma_1 \ \cdots \ \sigma_{j-1} \ \sigma_{j+1} \ \cdots \ \sigma_{|V(T)|-k+1}\text{ and }\sigma^{\text{tail}'}=a \ \sigma_{|V(T)|-k+2} \ \cdots \ \sigma_{\ell(\sigma)},
\end{equation} and we define $(S_1,S_2,r_2)$ by
\begin{equation} \text{list}(S_1)=\sigma^{\text{head}'}, \text{list}(S_2)=\text{startmin}(\sigma^{\text{tail}'}),\text{ and }r_2=a-j+1.\end{equation}
Informally, $r_2$ records the original position of the entry $a$. Again, $\sigma^{\text{head}'}$ has all entries in $[a-1]$ and the descents and successive LR maxima of $\text{startmin}(\sigma^{\text{tail}'})$ are either in $\{2,\ldots,a\}$ or appear in $\sigma$ so these are tree lists of $G$. Moving the entry $a$ from position $j$ to position $(|V(T)|-k+1)$ created $(|V(T)|-k+1-j)$ new $G$-inversions, and then we applied \begin{equation}\text{indstart}(\sigma^{\text{tail}'})-1=a-\ell(\sigma^{\text{head}'})-1=a-(|V(T)|-k)-1\end{equation} transpositions, so \eqref{eq:2segmentsinv} holds. Also note that $a\in V(S_2)$ and we have
\begin{equation}\label{eq:case22} a-(|V(T)|-k)+1\leq r_2\leq a-\epsilon-1\end{equation} because $j\geq 2$ with equality only if $\epsilon=0$.\\

If $j\leq a-k$, then we remove $k$ entries from the inside of $\sigma$ and define
\begin{equation}
\sigma^{\text{in}}=\sigma_{a-k+1} \ \cdots \ \sigma_a\text{ and }\sigma^{\text{out}}=\sigma_1 \ \cdots \ \sigma_{a-k} \ \sigma_{a+1} \ \cdots \ \sigma_{|V(T)|},
\end{equation} and we define $(S_1,S_2,r_2)$ by 
\begin{equation}
\text{list}(S_1)=\sigma^{\text{out}}, \ \text{list}(S_2)=\text{startmin}(\sigma^{\text{in}}),\text{ and }r_2=\text{indstart}(\sigma^{\text{in}}).
\end{equation}
Because $j\leq a-k$, the entry $a$ appears in $\sigma^{\text{out}}$ so its descents and successive LR maxima appear in $\sigma$ and $\text{startmin}(\sigma^{\text{in}})$ has all entries in $[a-1]$ so these are tree lists of $G$, and we have $r_2\leq\ell(\sigma^{\text{in}})=k$. Moving the string $\sigma^{\text{in}}$ does not affect the number of $G$-inversions because its entries are less than $a$, whereas every $\sigma_t>a$ for $t>a$ by Lemma \ref{lem:treelistofkchain}, so again \eqref{eq:2segmentsinv} follows from \eqref{eq:invrebalance}. Also note that the entries $\sigma_1$ and $\sigma_2$ are unaffected and $a\in V(S_1)$.\\

It remains to show that the map $\text{break}_k$ is a bijection. The idea is that we can determine which case of the map was applied by whether $a\in V(S_2)$ and from $r_2$, using \eqref{eq:case21} and \eqref{eq:case22}, so we can recover $\text{list}(T)$. If $a\in V(S_2)$ and $r_2\leq a-(|V(S_1)\sqcup V(S_2)|-k)$, then we must have been in the first case where $j\geq |V(T)|-k+1$ and the inverse map is given by \begin{equation}\text{list}(T)=\text{list}(S_1)\cdot\text{startr}(\text{list}(S_2),r_2).\end{equation}
Note that because $r_2\leq a-\epsilon-1$, the $r_2$-th smallest entry of $S_2$ can only be an $a$ if $\epsilon=0$, so $\text{list}(T)$ is indeed a tree list of $G$. If $a\in V(S_2)$ and $r_2\geq a-(|V(S_1)\sqcup V(S_2)|-k)+1$, then we must have been in the second case where $a-k+1\leq j\leq |V(T)|-k$ and the inverse map is given by concatenating the permutations $\tau^{(1)}=\text{list}(S_1)$ and $\tau^{(2)}=\text{startr}(\text{list}(S_2,a-|V(S_1)|))$ and then returning the entry $a$ from the start of $\tau^{(2)}$ to position $j=a-r_2+1$, that is, 
\begin{equation}
\text{list}(T)=\tau^{(1)}_1 \ \cdots \ \tau^{(1)}_{a-r_2} \ a \ \tau^{(1)}_{a-r_2+2} \ \cdots \ \tau^{(1)}_{|V(S_1)|} \ \tau^{(2)}_2 \ \cdots \ \tau^{(2)}_{k-1}. 
\end{equation} Note that because $r_2\leq a-\epsilon-1$, the entry $a$ can only move to position $j=2$ if $\epsilon=0$, so $\text{list}(T)$ is indeed a tree list of $G$. Finally, if $a\in V(S_1)$, then we must have been in the third case where $j\leq a-k$ and the inverse map is given by inserting $\tau^{(2)}=\text{startr}(\text{list}(S_2,r_2)$ into $\tau^{(1)}=\text{list}(S_1)$, specifically,
\begin{equation}
\text{list}(T)=\tau^{(1)}_1 \ \cdots \ \tau^{(1)}_{a-|V(S_1)|} \ \tau^{(2)}_1 \ \cdots \ \tau^{(2)}_{k} \ \tau^{(1)}_{a-|V(S_1)|+1} \ \cdots \ \tau^{(1)}_{|V(S_1)|}.
\end{equation}
These constructions are inverse to each other, so we indeed have a sign-reversing involution.\\

\textbf{Case 3: $\mathcal F\vert_{\geq a}$ is fixed under $\varphi'$, $\text{seg}_{[a]}(\mathcal F)\geq 3$, and $\text{seg}_{[a]}(\mathcal T_i)\leq\text{seg}_{[a]}(\mathcal F)-2$. } We now define a sign-reversing involution $\varphi_{\subseteq[a-1]}$ on these $\mathcal F$ by applying the $\text{easybreak}$ and $\text{easyjoin}$ maps used in the proof of Proposition \ref{prop:complete}, but now we carefully work around $\mathcal T_i$. Note that the tree triples present in $[a]$ other than $\mathcal T_i$ have all vertices in $[a-1]$. Because \begin{equation}\text{seg}_{[a]}(\mathcal F)-\text{seg}_{[a]}(\mathcal T_i)=\sum_{1\leq t\leq i', t\neq i}\ell(\alpha^{(t)})\geq 2,\end{equation} either some such tree triple is breakable or there are at least two such tree triples.\\

First suppose that $\sum_{i+1\leq t\leq i'}\ell(\alpha^{(t)})\geq 2$, meaning that $i'\geq i+1$ and either $\mathcal T_{i'}$ is breakable, or $\mathcal T_{i'}$ is not breakable and in fact $i'\geq i+2$. Informally, this means there are at least two segments of trees occurring after $T_i$. If $\mathcal T_{i'}$ is breakable, then we define $\varphi_{\subseteq[a-1]}(\mathcal F)$ by replacing $\mathcal T_{i'}$ by the two tree triples of $\text{easybreak}(\mathcal T_{i'})$, and if $\mathcal T_{i'}$ is not breakable, then we define $\varphi_{\subseteq[a-1]}(\mathcal F)$ by replacing $\mathcal T_{i'-1}$ and $\mathcal T_{i'}$ by $\text{easyjoin}(\mathcal T_{i'-1},\mathcal T_{i'})$. The map $\varphi_{\subseteq[a-1]}$ is a sign-reversing involution on these $\mathcal F$, it preserves type and $\alpha^{(1)}_1$, and it preserves weight by \eqref{eq:inveasymap} and because it preserves the order of the trees. \\

Similarly, suppose that $i'=i$, and either $\mathcal T_{i-1}$ is not breakable, or $\mathcal T_{i-1}$ is breakable with $\text{easybreak}(\mathcal T_{i-1})=(\mathcal S_1,\mathcal S_2=(S_2,\beta^{(2)},r'))$, and $\min(V(S_2))<\min(V(T_i))$. Then we again define $\varphi_{\subseteq[a-1]}(\mathcal F)$ by either replacing $\mathcal T_{i-1}$ by the two tree triples of $\text{easybreak}(\mathcal T_{i-1})$ if $\mathcal T_{i-1}$ is breakable, or by replacing $\mathcal T_{i-2}$ and $\mathcal T_{i-1}$ by $\text{easyjoin}(\mathcal T_{i-2},\mathcal T_{i-1})$ otherwise. The map $\varphi_{\subseteq[a-1]}$ is a sign-reversing involution on these $\mathcal F$, it preserves type and $\alpha^{(1)}_1$, and it preserves weight by \eqref{eq:inveasymap} and because it preserves the order of the trees.\\

\begin{figure}
\caption{\label{fig:varphieasy} Some simple forest triples $\mathcal F$ of $K_{64}$ and $\varphi_{\subseteq[5]}(\mathcal F)$}

\begin{tikzpicture}
\draw (1.25,0) node (){$K_{64}=$};
\filldraw (2,0) circle (3pt) node[align=center,above] (1){1};
\filldraw (2.5,0.866) circle (3pt) node[align=center,above] (2){2};
\filldraw (2.5,-0.866) circle (3pt) node[align=center,below] (3){3};
\filldraw (3.5,0.866) circle (3pt) node[align=center,above] (4){4};
\filldraw (3.5,-0.866) circle (3pt) node[align=center,below] (5){5};
\filldraw (4,0) circle (3pt) node[align=center,above] (6){6};
\filldraw (4.5,0.866) circle (3pt) node[align=center,above] (7){7};
\filldraw (4.5,-0.866) circle (3pt) node[align=center,below] (8){8};
\filldraw (5,0) circle (3pt) node[align=center,above] (9){9};

\draw (2,0)--(5,0) (2,0) -- (2.5,0.866) -- (3.5,0.866) -- (4,0) -- (3.5,-0.866) -- (2.5,-0.866) -- (2,0) (2,0) -- (3.5,-0.866) -- (3.5,0.866) -- (2,0) (4,0) -- (2.5,0.866) -- (2.5,-0.866) -- (4,0) (2.5,0.866) -- (3.5,-0.866) (2.5,-0.866) -- (3.5,0.866) (4,0) -- (4.5,0.866) -- (5,0) -- (4.5,-0.866) -- (4,0) (4.5,0.866) -- (4.5,-0.866);
\end{tikzpicture}
\begin{equation*}
\begin{tabular}{|c|c|}
\hline$\mathcal F$&$\varphi_{\subseteq[5]}(\mathcal F)$\\\hline
$(\textcolor{gray}{136789},\textcolor{gray}6,\textcolor{gray}1),(\textcolor{red}2,\textcolor{red}1,1),(\textcolor{blue}{45},\textcolor{blue}2,\textcolor{teal}1)$&$(\textcolor{gray}{136789},\textcolor{gray}6,\textcolor{gray}1),(\textcolor{red}2\textcolor{blue}{45},\textcolor{red}1\textcolor{blue}2,1)$\\\hline
$(\textcolor{red}1,\textcolor{red}1,1),(\textcolor{blue}{23},\textcolor{blue}2,\textcolor{teal}2),(\textcolor{gray}{456789},\textcolor{gray}{33},\textcolor{gray}2)$&$(\textcolor{red}1\textcolor{blue}{32},\textcolor{red}1\textcolor{blue}2,1),(\textcolor{gray}{456789},\textcolor{gray}{33},\textcolor{gray}2)$\\\hline
$(\textcolor{red}1,\textcolor{red}1,1),(\textcolor{gray}2\textcolor{orange}5\textcolor{gray}{6789},\textcolor{gray}{123},\textcolor{gray}1),(\textcolor{orange}{34},\textcolor{blue}2,\textcolor{teal}1)$&$(\textcolor{red}1\textcolor{orange}{53},\textcolor{red}1\textcolor{blue}2,1),(\textcolor{gray}2\textcolor{orange}4\textcolor{gray}{6789},\textcolor{gray}{123},\textcolor{gray}1)$\\\hline
$(\textcolor{red}1,\textcolor{red}1,1),(\textcolor{gray}{26}\textcolor{orange}4\textcolor{gray}{789},\textcolor{gray}6,\textcolor{gray}4),(\textcolor{orange}{35},\textcolor{blue}2,\textcolor{teal}2)$&$(\textcolor{red}1\textcolor{orange}{45},\textcolor{red}1\textcolor{blue}2,1),(\textcolor{gray}{26}\textcolor{orange}3\textcolor{gray}{789},\textcolor{gray}6,\textcolor{gray}4)$\\\hline
$(\textcolor{red}1,\textcolor{red}1,1),(\textcolor{gray}2\textcolor{orange}4\textcolor{gray}6\textcolor{orange}3\textcolor{gray}{789},\textcolor{gray}7,\textcolor{gray}5),(\textcolor{orange}5,\textcolor{blue}1,\textcolor{teal}1)$&$(\textcolor{red}1\textcolor{orange}4,\textcolor{red}1\textcolor{blue}1,1),(\textcolor{gray}2\textcolor{orange}3\textcolor{gray}6\textcolor{orange}5\textcolor{gray}{789},\textcolor{gray}7,\textcolor{gray}5)$\\\hline
\end{tabular}
\end{equation*}
\end{figure}
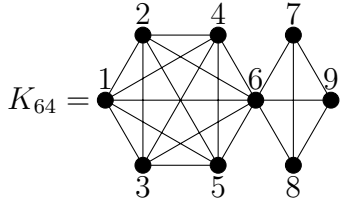

It remains to find a bijection $\varphi_{\subseteq[a-1]}$ between the case where $i'=i+1$ and $\mathcal T_{i+1}$ is not breakable, and the case where $i'=i$ and $\mathcal T_{i-1}$ is breakable with $\text{easybreak}(\mathcal T_{i-1})=(\mathcal S_1,\mathcal S_2=(S_2,\beta^{(2)},r'))$, but this time $\min(V(T_i))<\min(V(S_2))$. Then we get a sign-reversing involution by either applying $\varphi_{\subseteq[a-1]}$ or its inverse. Suppose that $\mathcal F$ is in the former case.\\

We would like to join $\mathcal T_{i+1}$ to $\mathcal T_{i-1}$, but the problem is that when we compute weight, the entries of $T_{i+1}$ would now be read before the entries of $T_i$. The idea is to first move $\mathcal T_{i+1}$ ``through'' $\mathcal T_i$, preserving the relative order of entries $x$ with $\min(V(T_i))<x<a$, before joining to $\mathcal T_{i-1}$. Let $k=|V(T_i)\cap [a]|$, $\sigma=\text{list}(T_i)$, $\sigma'=\text{startr}(\text{list}(T_{i+1}),r_{i+1})$, and let $2\leq j\leq k$ be such that $\sigma_j=a$. We now define
\begin{equation}
\tau=\sigma_2\ \cdots \ \sigma_{j-1} \ \sigma_{j+1} \ \cdots \ \sigma_k \ \sigma'_1 \ \cdots \ \sigma'_{\ell(\sigma')},
\end{equation}
the permutation consisting of the entries $x$ in $V(T_i)\sqcup V(T_{i+1})$ with $\min(V(T_i))<x<a$, which in particular are all adjacent. We use the first $|V(T_{i+1})|$ entries of $\tau$ to join with $\mathcal T_{i-1}$ to make the tree triple
\begin{equation}
\mathcal T=(\text{list}(T_{i-1})\cdot(\tau_1 \ \cdots \ \tau_{|V(T_{i+1})|}),\alpha^{(i-1)}\cdot\alpha^{(i+1)},r_{i-1}),
\end{equation}
and we use the remaining entries of $\tau$ to modify $\mathcal T_i$ to make the tree triple
\begin{equation}
\mathcal T_i'=(\sigma_1 \ \tau_{|V(T_{i+1})|+1} \ \cdots \ \tau_{|V(T_{i+1})|+j-2} \ a \ \tau_{|V(T_{i+1})|+j-1} \ \cdots \ \tau_{\ell(\tau)} \ \sigma_{k+1} \ \cdots \ \sigma_{\ell(\sigma)},\alpha^{(i)},r_i).
\end{equation}
Then we define $\varphi_{\subseteq[a-1]}(\mathcal F)$ by replacing by tree triples $\mathcal T_{i-1}$, $\mathcal T_i$, and $\mathcal T_{i+1}$ by $\mathcal T$ and $\mathcal T_i'$. The entries that moved are in $[a-1]$ so are adjacent in $G$ and these are indeed tree triples. Note that entries of $\tau$ occurring after $a$, which make a $G$-inversion with $a$ and not $\sigma_1$, are exchanged with entries occurring before $\sigma_1$, which make a $G$-inversion with $\sigma_1$ and not $a$. Therefore, the map $\varphi_{\subseteq[a-1]}$ preserves weight, and by construction it also reverses sign and preserves type and $\alpha^{(1)}_1$. We have indeed produced a forest triple for which $i'=i$, $\mathcal T_{i-1}$ is breakable with $\text{easybreak}(\mathcal T_{i-1})=(\mathcal S_1,\mathcal S_2=(S_2,\beta^{(2)},r'))$, and $\min(V(T_i))<\min(V(S_2))$.\\

To define the inverse map, we break $\mathcal T_{i-1}$ as $\text{easybreak}(\mathcal T_{i-1})=(\mathcal S_1,\mathcal S_2=(S_2,\beta^{(2)},r'))$ and move $\mathcal S_2$ back ``through'' $\mathcal T_i$. Let $k=|V(T_i)\cap[a]|$, $\sigma=\text{list}(T_i)$,  $\sigma'=\text{startr}(\text{list}(S_2),r')$, and let $2\leq j\leq k$ be such that $\sigma_j=a$. We now define
\begin{equation}\tau=\sigma'_1\ \cdots \ \sigma'_{\ell(\sigma')} \ \sigma_2 \ \cdots \ \sigma_{j-1} \ \sigma_{j+1} \ \cdots \ \sigma_k,\end{equation}
the permutation consisting of the entries $x$ in $V(S_2)\sqcup V(T_i)$ with $\min(V(T_i))<x<a$. We use the first $(k-2)$ entries of $\tau$ to modify $\mathcal T_i$ to make the tree triple
\begin{equation}
\mathcal T_i'=(\sigma_1 \ \tau_1 \ \cdots \ \tau_{j-2} \ a \ \tau_{j-1} \ \cdots \ \tau_{k-2} \ \sigma_{k+1} \ \cdots \ \sigma_{\ell(\sigma)}, \alpha^{(i)}, r_i),
\end{equation}
and we use the remaining entries of $\tau$ to make the tree triple
\begin{equation}
\mathcal S_2'=(\text{startmin}(\tau_{k-1} \ \cdots \ \tau_{\ell(\tau)}),\beta^{(2)},\text{indstart}(\tau_{k-1}\ \cdots \ \tau_{\ell(\tau)})).
\end{equation}
Then the inverse map is given by replacing the tree triples $\mathcal T_{i-1}$ and $\mathcal T_i$ by $\mathcal S_1$, $\mathcal T_i'$, and $\mathcal S_2'$. \\

\begin{figure}
\caption{\label{fig:varphi3} Some simple forest triples $\mathcal F$ of $K_{64}$ and $\varphi_{\geq 3}(\mathcal F)$}

\begin{tikzpicture}
\draw (1.25,0) node (){$K_{64}=$};
\filldraw (2,0) circle (3pt) node[align=center,above] (1){1};
\filldraw (2.5,0.866) circle (3pt) node[align=center,above] (2){2};
\filldraw (2.5,-0.866) circle (3pt) node[align=center,below] (3){3};
\filldraw (3.5,0.866) circle (3pt) node[align=center,above] (4){4};
\filldraw (3.5,-0.866) circle (3pt) node[align=center,below] (5){5};
\filldraw (4,0) circle (3pt) node[align=center,above] (6){6};
\filldraw (4.5,0.866) circle (3pt) node[align=center,above] (7){7};
\filldraw (4.5,-0.866) circle (3pt) node[align=center,below] (8){8};
\filldraw (5,0) circle (3pt) node[align=center,above] (9){9};

\draw (2,0)--(5,0) (2,0) -- (2.5,0.866) -- (3.5,0.866) -- (4,0) -- (3.5,-0.866) -- (2.5,-0.866) -- (2,0) (2,0) -- (3.5,-0.866) -- (3.5,0.866) -- (2,0) (4,0) -- (2.5,0.866) -- (2.5,-0.866) -- (4,0) (2.5,0.866) -- (3.5,-0.866) (2.5,-0.866) -- (3.5,0.866) (4,0) -- (4.5,0.866) -- (5,0) -- (4.5,-0.866) -- (4,0) (4.5,0.866) -- (4.5,-0.866);
\end{tikzpicture}
\begin{equation*}
\begin{tabular}{|c|c|}\hline
$\mathcal F$&$\varphi_{\geq 3}(\mathcal F)$\\\hline
$(1\textcolor{red}2\textcolor{teal}6\textcolor{blue}{345}\textcolor{gray}{789},\textcolor{red}1\textcolor{blue}3\textcolor{red}5,1)$&$(\textcolor{red}{126}\textcolor{gray}{789},\textcolor{red}{15},1),(\textcolor{blue}{345},\textcolor{blue}3,\textcolor{blue}1)$\\\hline
$(1\textcolor{teal}6\textcolor{red}3\textcolor{blue}{425}\textcolor{gray}{789},\textcolor{red}1\textcolor{blue}3\textcolor{red}5,1)$&$(\textcolor{red}{163}\textcolor{gray}{789},\textcolor{red}{15},1),(\textcolor{blue}{245},\textcolor{blue}3,\textcolor{blue}2)$\\\hline
$(1\textcolor{red}5\textcolor{teal}6\textcolor{blue}{432}\textcolor{gray}{789},\textcolor{red}1\textcolor{blue}3\textcolor{red}{41},1)$&$(\textcolor{red}{156}\textcolor{gray}{789},\textcolor{red}{141},1),(\textcolor{blue}{243},\textcolor{blue}3,\textcolor{blue}3)$\\\hline
$(1\textcolor{orange}2345\textcolor{teal}6\textcolor{gray}{789},\textcolor{red}1\textcolor{blue}{35},1)$&$(\textcolor{red}1,\textcolor{red}1,1),(\textcolor{blue}{2345}\textcolor{orange}6\textcolor{gray}{789},\textcolor{blue}{35},\textcolor{teal}1)$\\\hline
$(1\textcolor{orange}3245\textcolor{teal}6\textcolor{gray}{789},\textcolor{red}1\textcolor{blue}{35},1)$&$(\textcolor{red}1,\textcolor{red}1,1),(\textcolor{blue}{234}\textcolor{orange}6\textcolor{blue}5\textcolor{gray}{789},\textcolor{blue}{35},\textcolor{teal}1)$\\\hline
$(1\textcolor{orange}234\textcolor{teal}65\textcolor{gray}{789},\textcolor{red}1\textcolor{blue}{35},1)$&$(\textcolor{red}1,\textcolor{red}1,1),(\textcolor{blue}{2345}\textcolor{orange}6\textcolor{gray}{789},\textcolor{blue}{35},\textcolor{teal}2)$\\\hline
$(1\textcolor{orange}45\textcolor{teal}623\textcolor{gray}{789},\textcolor{red}1\textcolor{blue}{341},1)$&$(\textcolor{red}1,\textcolor{red}1,1),(\textcolor{blue}{25}\textcolor{orange}6\textcolor{blue}{34}\textcolor{gray}{789},\textcolor{blue}{341},\textcolor{teal}3)$\\\hline
\end{tabular}
\end{equation*}
\end{figure}

\textbf{Case 4: $\mathcal F\vert_{\geq a}$ is fixed under $\varphi'$, $\text{seg}_{[a]}(\mathcal F)\geq 3$, and $\text{seg}_{[a]}(\mathcal T_i)-\text{seg}_{[a]}(\mathcal F)=0$ or $1$.} We define a bijection $\varphi_{\geq 3}$ between such $\mathcal F$ with $\text{seg}_{[a]}(\mathcal T_i)=\text{seg}_{[a]}(\mathcal F)$, meaning that the only tree triple present in $[a]$ is $\mathcal T_1=(T_1,\alpha^{(1)},1)$, where $[a]\subseteq V(T_1)$, $\ell(\alpha^{(1)})\geq 3$, and $\alpha^{(1)}_1+\alpha^{(1)}_2<a$; and those with $\text{seg}_{[a]}(\mathcal T_i)=\text{seg}_{[a]}(\mathcal F)-1$, meaning there is exactly one other tree triple present in $[a]$ and it is not breakable. Then we get a sign-reversing involution by either applying $\varphi_{\geq 3}$ or its inverse. Let $\sigma=\text{list}(T_1)$ and let $j$ be such that $\sigma_j=a$. Note that $2\leq j\leq a$ by Lemma \ref{lem:treelistofkchain} and we can have $j=2$ only if $\epsilon=0$. We will have two cases, depending on the position of the entry $a$ in $\sigma$.\\

If $j\leq a-\alpha^{(1)}_2$, then we remove $\alpha^{(1)}_2$ entries from the inside of $\sigma$ and define
\begin{equation}  \sigma^{\text{in}}=\sigma_{a-\alpha^{(1)}_2+1}\cdots\sigma_a\text{ and }\sigma^{\text{out}}=\sigma_1\cdots \sigma_{a-\alpha^{(1)}_2} \ \sigma_{a+1} \ \cdots \ \sigma_{|V(T)|},\end{equation} and we define $\varphi_{\geq 3}(\mathcal F)$ by replacing $\mathcal T_1$ by the tree triples
\begin{equation}
\mathcal S_1=(\sigma^{\text{out}},\alpha^{(1)}_1\alpha^{(1)}_3\cdots\alpha^{(1)}_\ell,1)\text{ and }\mathcal S_2=(\text{startmin}(\sigma^{\text{in}}),\alpha^{(1)}_2,\text{indstart}(\sigma^{\text{in}})).
\end{equation}
Because $j\leq a-\alpha^{(1)}_2$, the entry $a$ appears in $\sigma^{\text{out}}$ so its descents and successive LR maxima appear in $\sigma$, and $\text{startmin}(\sigma^{\text{in}})$ has all entries in $[a-1]$ so these are tree lists of $G$, and we have $r_2\leq\ell(\sigma^{\text{in}})=\alpha^{(1)}_2$, so $\mathcal S_2$ is indeed a tree triple of $G$. Moving the string $\sigma^{\text{in}}$ does not affect the number of $G$-inversions because its entries are less than $a$, whereas every $\sigma_t>a$ for $t>a$ by Lemma \ref{lem:treelistofkchain}. The map $\varphi_{\geq 3}$ reverses sign, preserves type, $\alpha^{(1)}_1$, and weight. Also note that the vertices $1$ and $a$ are in the same tree.\\

If $a-\alpha^{(1)}_2+1\leq j\leq a$, which in particular means that $j\neq 2$ because $\alpha^{(1)}_1+\alpha^{(1)}_2<a$, then we separate $\sigma$ into its head and tail,
\begin{equation}
\sigma^{\text{head}}=\sigma_1\cdots\sigma_{\alpha^{(1)}_1}\text{ and }\sigma^{\text{tail}}=\sigma_{\alpha^{(1)}_1+1}\cdots\sigma_{|V(T)|}. 
\end{equation}
As before, we apply transpositions to $\sigma^{\text{tail}}$ to put its minimum element in front. We will now do something a little unusual. We move the entry $a$ to position $(a-\text{indstart}(\sigma^{\text{tail}})+1)$ so that it makes $(\text{indstart}(\sigma^{\text{tail}})-1)$ $G$-inversions to account for these transpositions. We then use the new value of $r=a-j+1$ to account for the original $(a-j)$ $G$-inversions made by the entry $a$. Specifically, let $r'=\text{indstart}(\sigma^{\text{tail}})$ and let $\tau$ be $\text{startmin}(\sigma^{\text{tail}})$ with the entry $a$ removed, and define $\varphi_{\geq 3}(\mathcal F)$ by replacing $\mathcal T_1$ by the tree triples
\begin{equation}
\mathcal S_1=(\sigma^{\text{head}},\alpha^{(1)}_1,1)\text{ and }\mathcal S_2=(\tau_1 \ \cdots \ \tau_{a-r'} \ a \ \tau_{a-r'+1} \ \cdots \ \tau_{\ell(\tau)},\alpha^{(1)}_2\cdots\alpha^{(1)}_\ell,a-j+1).
\end{equation}
The entries of $\sigma^{\text{head}}$ and the entries of $\tau$ that moved are all in $[a-1]$ so are adjacent and these are tree lists of $G$. We also have $a-j+1\leq\alpha^{(2)}_1$, so $\mathcal S_2$ is indeed a tree triple of $G$. The map $\varphi_{\geq 3}$ reverses sign and preserves type, $\alpha^{(1)}_1$, and weight. Also note that the vertices $1$ and $a$ are in different trees. \\

It remains to show that the map $\varphi_{\geq 3}$ is a bijection. Suppose that $\text{seg}_{[a]}(\mathcal T_i)=\text{seg}_{[a]}(\mathcal F)-1$. The idea is that we can determine which case of the map was applied by whether the vertices $1$ and $a$ are in the same tree in $\mathcal F$, so we can recover the original tree triple.\\

If the vertices $1$ and $a$ are in the same tree $T_i=T_1$, then we must have been in the first case where $j\leq a-\alpha^{(1)}_2$. Let $\sigma=\text{list}(T_1)$, $\sigma'=\text{startr}(\text{list}(T_2),r_2)$, $k=|V(T_2)\cap [a]|$, and then the inverse map is given by inserting $\sigma'$ back into $\sigma$, specifically, by replacing $\mathcal T_1$ and $\mathcal T_2$ by 
\begin{equation}
\mathcal T=(\sigma_1 \ \cdots \ \sigma_k \ \sigma'_1\ \cdots \ \sigma'_{|V(T_2)|} \ \sigma_{k+1} \ \cdots \ \sigma_{|V(T_1)|}, \alpha^{(1)}_1\ \alpha^{(2)}_1 \ \alpha^{(1)}_2 \ \cdots \ \alpha^{(1)}_\ell,1).
\end{equation}
If the vertices $1$ and $a$ are in different trees, so that $1\in V(T_1)$ and $a\in V(T_2)$, then we must have been in the second case where $a-\alpha^{(1)}_2+1\leq j\leq a$. Let $\sigma'=\text{list}(T_1)$, let $\sigma=\text{list}(T_2)$, let $k=|V(T_2)\cap [a]|$, let $j'$ be such that $\sigma_{j'}=a$, and let $\tau$ be $\text{startr}(\text{list}(T_2),k-j'+1)$ with the entry $a$ removed. The position of the entry $a$ tells us the original first entry of the tail and the value of $r_2$ tells us the original position of the entry $a$. The inverse map is given by replacing $\mathcal T_1$ and $\mathcal T_2$ by
\begin{equation}
\mathcal T=(\sigma'_1 \ \cdots \ \sigma'_{|V(T_2)|} \ \tau_1 \ \cdots \ \tau_{k-r_2-1} \ a \ \tau_{k-r_2} \ \cdots \ \tau_{\ell(\tau)},\alpha^{(1)}_1 \ \alpha^{(2)}_1 \ \cdots \ \alpha^{(2)}_\ell,1).
\end{equation}
These constructions are inverse to each other, so we indeed have a sign-reversing involution in this case.\\

By combining our sign-reversing involutions in these four cases, we build a nice involution $\varphi$ for $G$ with the prescribed fixed points, thus completing the proof.
\end{proof}

Now we can repeatedly apply Theorem \ref{thm:key} to obtain a combinatorial description of fixed forest triples for any \emph{almost-$K$-chain}, which is a graph of the form \begin{equation}K_\gamma^\epsilon=K_{\gamma_1}^{\epsilon_1}+\cdots+K_{\gamma_\ell}^{\epsilon_\ell},\end{equation} where $\gamma=\gamma_1\cdots\gamma_\ell$ is a composition with all parts at least $2$ and $\epsilon=\epsilon_1\cdots\epsilon_\ell$ is a sequence of $0$'s and $1$'s. Aliniaeifard, Wang, and van Willigenburg proved that for such graphs, the chromatic symmetric function $X_{K_\gamma^\epsilon}(\bm x)$ is $e$-positive \cite[Proposition 5.4]{csfvertex}.

\begin{definition}\label{def:kchainfixed}
Let $K_\gamma^\epsilon$ be an almost-$K$-chain and for $0\leq t\leq\ell(\gamma)$, let \begin{equation}c_t=\gamma_1+\cdots+\gamma_t-t+1,\end{equation} so that $c_0=1$, $c_{\ell(\gamma)}=n$, and $c_1,\ldots,c_{\ell(\gamma)-1}$ are the cut vertices of $K_\gamma^\epsilon$. Let $\text{Fix}(K_\gamma^\epsilon)$ denote the set of forest triples $\mathcal F\in\text{FT}(K_\gamma^\epsilon)$ that are atoms and such that for every $1\leq t\leq\ell(\gamma)$, letting $\mathcal T=(T,\alpha,r)$ denote the tree triple of $\mathcal F$ with $c_t\in V(T)$, we have the following.
\begin{enumerate}
\item [(C1)] There is at most one other tree triple $\mathcal T'=(T',\alpha',r')$ in $\mathcal F$ such that there exists a vertex $v\in V(T')$ with $c_{t-1}\leq v\leq c_t$.
\item [(C2)] If $c_{t-1}\notin V(T)$, we have $r\geq \gamma_t-\epsilon_t$.
\item [(C3)] If $c_{t-1}\in V(T)$, we have $|\{v\in V(T): \ v\geq c_{t-1}\}|\geq \gamma_t$. 
\end{enumerate}
Let $\text{Fix}_{\text{simple}}(K_\gamma^\epsilon)$ denote the set of simple forest triples $\mathcal F\in\text{Fix}(K_\gamma^\epsilon)$.
\end{definition}

\begin{corollary} The chromatic quasisymmetric function of an almost-$K$-chain $K_\gamma^\epsilon$ is
\begin{equation} \label{eq:kchainfixed}
X_{K_\gamma^\epsilon}(\bm x;q)=\sum_{\mathcal F\in\text{Fix}_{\text{simple}}(K_\gamma^\epsilon)}[\alpha^{(1)}_1(\mathcal F)]_q \ q^{\text{weight}(\mathcal F)}e_{\text{type}(\mathcal F)}=\sum_{\mathcal F\in\text{Fix}(K_\gamma^\epsilon)}q^{\text{weight}(\mathcal F)}e_{\text{type}(\mathcal F)}.
\end{equation} 
In particular, $X_{K_\gamma^\epsilon}(\bm x;q)$ is $e$-positive.
\end{corollary}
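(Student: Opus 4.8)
The plan is to deduce \eqref{eq:kchainfixed} by constructing, via Theorem \ref{thm:key}, a nice involution $\varphi$ for $K_\gamma^\epsilon$ whose fixed points are exactly $\text{Fix}_{\text{simple}}(K_\gamma^\epsilon)$, and then reading off the formula. I would argue by induction on $\ell=\ell(\gamma)$. For the base case $\ell=0$, take the one-vertex graph $([1],\emptyset)$: its only forest triple is a simple atom, so the identity is a nice involution with fixed-point set $\text{Fix}_{\text{simple}}(([1],\emptyset))$; Proposition \ref{prop:complete} is the $\ell=1$, $\epsilon=0$ instance and serves as a sanity check. For the inductive step, write $K_\gamma^\epsilon=K_{\gamma_1}^{\epsilon_1}+K_{\gamma_2\cdots\gamma_\ell}^{\epsilon_2\cdots\epsilon_\ell}$ using associativity of the graph sum; by induction there is a nice involution $\varphi'$ for $G'=K_{\gamma_2\cdots\gamma_\ell}^{\epsilon_2\cdots\epsilon_\ell}$ with $\text{Fix}(\varphi')=\text{Fix}_{\text{simple}}(G')$, and Theorem \ref{thm:key} applied with $a=\gamma_1$, $\epsilon=\epsilon_1$ yields a nice involution $\varphi$ for $K_\gamma^\epsilon$.

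The heart of the matter, and the step I expect to be the main obstacle, is checking that the fixed-point set of $\varphi$ produced by Theorem \ref{thm:key} coincides with $\text{Fix}_{\text{simple}}(K_\gamma^\epsilon)$ of Definition \ref{def:kchainfixed}. I would match the conditions in two groups. The conditions of Theorem \ref{thm:key} referring to $a=\gamma_1=c_1$ translate directly into the level-$t=1$ conditions: for an atom, $\text{seg}_{[\gamma_1]}(\mathcal F)\le 2$ says exactly that at most one tree triple other than the one containing $c_1$ has a vertex in $[c_0,c_1]$, which is (C1); ``$r_i\ge\gamma_1-\epsilon_1$ when $1\notin V(T_i)$'' is (C2); and ``$|V(T_i)|\ge\gamma_1$ when $1\in V(T_i)$'' is (C3). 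The remaining condition ``$\mathcal F\vert_{\geq\gamma_1}$ is fixed under $\varphi'$'' becomes, by the inductive hypothesis, ``$\mathcal F\vert_{\geq\gamma_1}\in\text{Fix}_{\text{simple}}(G')$'', and after relabelling vertices of $G'$ by $\gamma_1-1$ (which sends its cut vertices to $c_2,\dots,c_\ell$) I must show this is equivalent to (C1)--(C3) holding at levels $t=2,\dots,\ell$ for $\mathcal F$. This uses that the restriction $\mathcal F\mapsto\mathcal F\vert_{\geq\gamma_1}$ takes atoms to atoms and, tree triple by tree triple, preserves precisely the data in (C1)--(C3): the parameters $r$, the counts $|\{v\in V(T):v\ge c_{t-1}\}|$, and the incidences $c_{t-1}\in V(T)$. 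The structural input is a consequence of Lemma \ref{lem:treelistofkchain}: since each $c_t$ is a cut vertex, any tree of $\mathcal F$ meeting both $\{v\le c_t\}$ and $\{v>c_t\}$ contains $c_t$, so the tree of $\mathcal F$ containing a given $c_t$ either lies entirely in $\{v>\gamma_1\}$ --- in which case the restriction just shifts it down by $\gamma_1-1$ and all relevant data is unchanged --- or it already contains $c_1$, and then it also contains $c_{t-1}$, so that (C2) is vacuous at level $t$ while (C3) applies with a count unaffected by deleting the vertices $<\gamma_1\le c_{t-1}$. The careful case analysis here --- bookkeeping which tree contains which cut vertex and how $\mathcal F\vert_{\geq\gamma_1}$ sees it --- is the one delicate point; once it is done, $\text{Fix}(\varphi)=\text{Fix}_{\text{simple}}(K_\gamma^\epsilon)$ and the induction closes.

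With the fixed-point set identified, the first equality in \eqref{eq:kchainfixed} is exactly the general formula $X_G(\bm x;q)=\sum_{\mathcal F\in\text{FT}_{\text{simple}}(G),\,\varphi(\mathcal F)=\mathcal F}[\alpha^{(1)}_1(\mathcal F)]_q\,q^{\text{weight}(\mathcal F)}e_{\text{type}(\mathcal F)}$ valid for any nice involution, applied to $\varphi$. For the second equality, observe that a forest triple in $\text{Fix}(K_\gamma^\epsilon)$ arises from the unique simple one with the same trees and compositions by choosing $r_1\in\{1,\dots,\alpha^{(1)}_1\}$, and (C1)--(C3) never constrain $r_1$: for each $t$ the tree containing $c_t$ that also contains vertex $1$ necessarily contains $c_{t-1}$ (since $c_0=1$ and, for $t\ge 2$, $c_{t-1}$ is a cut vertex separating $1$ from $c_t$), so (C2) is vacuous there while (C1) and (C3) involve only the trees and compositions; summing $q^{\text{weight}}$ over the $\alpha^{(1)}_1$ choices of $r_1$ --- which shifts weight by $r_1-1$ and leaves $\text{inv}_G$ and everything else fixed --- produces the factor $[\alpha^{(1)}_1]_q$. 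Finally, every summand on the right-hand side of \eqref{eq:kchainfixed} is a nonnegative power of $q$ times a $q$-integer $[\alpha^{(1)}_1(\mathcal F)]_q$, hence has nonnegative coefficients, so each $c_\mu(q)$ does as well, which yields $e$-positivity of $X_{K_\gamma^\epsilon}(\bm x;q)$.
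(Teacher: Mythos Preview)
Your proposal is correct and follows the same inductive approach as the paper: apply Theorem \ref{thm:key} to peel off $K_{\gamma_1}^{\epsilon_1}$ and match the resulting fixed-point description with Definition \ref{def:kchainfixed}. You are in fact slightly more careful than the paper in two places: your base case $\ell=0$ (the one-vertex graph) lets Theorem \ref{thm:key} handle both $K_a^0$ and $K_a^1$ uniformly at $\ell=1$, whereas the paper cites Proposition \ref{prop:complete} which strictly speaking only treats $\epsilon_1=0$; and you explicitly justify the second equality in \eqref{eq:kchainfixed} by checking that (C1)--(C3) never constrain $r_1$ (via the cut-vertex argument that any tree containing both $1$ and $c_t$ contains $c_{t-1}$), a point the paper leaves implicit.
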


\begin{proof}
We use induction on $\ell(\gamma)$ to show that there is a nice involution whose fixed points are exactly $\text{Fix}_{\text{simple}}(K_\gamma^\epsilon)$. The case where $\ell(\gamma)=1$ is done in Proposition \ref{prop:complete}, so suppose that $\ell(\gamma)\geq 2$ and let $G'=K_{\gamma_2}^{\epsilon_2}+\cdots+K_{\gamma_\ell}^{\epsilon_\ell}$. By Theorem \ref{thm:key} and our induction hypothesis, there is a nice involution $\varphi$ for $K_\gamma^\epsilon=K_{\gamma_1}^{\epsilon_1}+G'$ with fixed points
exactly
\begin{align}\label{eq:kchainfixedinduction}
\text{Fix}(\varphi)=\{\mathcal F\in & \text{FT}_{\text{simple}}(K_\gamma^\epsilon): \ \mathcal F\text{ is an atom, }\mathcal F\vert_{\geq \gamma_1}\in\text{Fix}_{\text{simple}}(G'), \ \text{seg}_{[\gamma_1]}(\mathcal F)\leq 2,\\\nonumber&\text{ if }1\notin V(T_i),\text{ then }r_i\geq \gamma_1-\epsilon_1,\text{ if }1\in V(T_i),\text{ then }|V(T_i)|\geq \gamma_1\},
\end{align} where $\mathcal T_i=(T_i,\alpha^{(i)},r_i)$ is the tree triple of $\mathcal F$ with $\gamma_1\in V(T_i)$. Note that by induction, the condition $\mathcal F\vert_{\geq \gamma_1}\in\text{Fix}_{\text{simple}}(G')$ means that for the restricted tree triple $\mathcal T_i\vert_{\geq \gamma_1}=(T',\alpha',1)$, if $\gamma_2\in V(T')$, then we must have $|V(T')|\geq\gamma_2$, which means that for the tree triple $\mathcal T_i$, if $c_1\in V(T_i)$, we must have 
$|\{v\in V(T_i): \ v\geq c_1\}|\geq \gamma_2$, because it is only these vertices of $T_i$ that appear in the restriction to $G'$. The other conditions of $\text{Fix}(K_\gamma^\epsilon)$ are exactly those in \eqref{eq:kchainfixedinduction}, so we have $\text{Fix}(\varphi)=\text{Fix}_{\text{simple}}(K_\gamma^\epsilon)$, as desired.
\end{proof}

Now by enumerating these fixed points, we can simplify \eqref{eq:kchainfixed} to a more explicit expression. We first state a Lemma about $q$-counting subsets.

\begin{lemma}\label{lem:qbinom}
For integers $0\leq k\leq n$, we have
\begin{equation}
\sum_{S\subseteq [n]: \ |S|=k}q^{|\{(i,j): \ i>j, \ i\notin S, \ j\in S\}|}=\binom nk_q,
\end{equation}
where the \emph{$q$-binomial coefficient} is $\binom nk_q=\frac{[n]_q!}{[k]_q![n-k]_q!}$.
\end{lemma}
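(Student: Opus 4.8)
The plan is to recognize the left-hand side as the classical generating function that enumerates binary words of content $(k,n-k)$ by their number of inversions, and to prove the identity by induction on $n$ using the Pascal-type recurrence for the $q$-binomial coefficient. First I would encode a subset $S\subseteq[n]$ with $|S|=k$ as the $0$-$1$ word $w=w_1\cdots w_n$ with $w_i=1$ iff $i\in S$, so that the exponent $|\{(i,j):i>j,\ i\notin S,\ j\in S\}|$ becomes exactly the number of pairs of positions $j<i$ with $w_j=1$ and $w_i=0$, i.e.\ the number of ``$10$'' patterns of $w$. Write $f_{n,k}(q)$ for the left-hand side, equivalently $\sum_w q^{\text{inv}(w)}$ over all such words.

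Next I would establish the recurrence $f_{n,k}(q)=f_{n-1,k-1}(q)+q^k f_{n-1,k}(q)$ by conditioning on the last letter $w_n$. If $w_n=1$, then position $n$ participates in no inversion (it cannot be the smaller coordinate of an inverted pair, since nothing lies to its right, and it cannot be the larger coordinate, since $n\in S$), and deleting it leaves a word counted by $f_{n-1,k-1}(q)$. If $w_n=0$, then position $n$ forms an inversion with each of the $k$ ones among $w_1,\ldots,w_{n-1}$ and with nothing else, contributing a factor $q^k$, and deleting it leaves a word counted by $f_{n-1,k}(q)$. Together with the base cases $f_{n,0}(q)=f_{n,n}(q)=1$ (a constant word has no $1$ preceding a $0$), this determines $f_{n,k}(q)$ uniquely.

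Finally I would verify that $\binom nk_q$ satisfies the same recurrence and base cases: from $[k]_q+q^k[n-k]_q=[n]_q$ one obtains $\binom{n-1}{k-1}_q+q^k\binom{n-1}{k}_q=\binom nk_q$ after clearing the common denominator $[k]_q![n-k]_q!$, while $\binom n0_q=\binom nn_q=1$ is immediate. By induction $f_{n,k}(q)=\binom nk_q$, which is the claim. I do not anticipate any genuine obstacle; the only point requiring care is the bookkeeping of exactly which inverted pairs a newly appended letter contributes to, and matching the two cases of the recurrence to the chosen form of the Pascal identity for the Gaussian binomial coefficient (as opposed to the dual form $\binom nk_q=q^{n-k}\binom{n-1}{k-1}_q+\binom{n-1}{k}_q$, which would arise from conditioning on the first letter instead).
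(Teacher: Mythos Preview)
Your proposal is correct and takes essentially the same approach as the paper: both argue by induction on $n$, split according to whether $n\in S$ or $n\notin S$, and invoke the Pascal-type identity $\binom nk_q=\binom{n-1}{k-1}_q+q^k\binom{n-1}{k}_q$. Your additional encoding of subsets as binary words is a harmless repackaging of the same case analysis.
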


\begin{proof}
As with ordinary binomial coefficients, the result follows from induction on $n$, the identity $\binom nk_q=q^k\binom{n-1}k_q+\binom{n-1}{k-1}_q$, and from considering the cases of $n\notin S$ and $n\in S$.
\end{proof}

Before we prove our explicit formula for $X_{K_\gamma^\epsilon}(\bm x;q)$, we will first consider the simpler case of a $K$-chain with two cliques because this will demonstrate the main ideas. 

\begin{corollary}
Consider the $K$-chain $K_{ab}$ with two cliques of sizes $a$ and $b$, which has $n=a+b-1$ vertices. Then the chromatic quasisymmetric function of $K_{ab}$ is
\begin{equation}
X_{K_{ab}}(\bm x;q)=[a-1]_q![b-1]_q!\sum_{k=\max\{a,b\}}^nq^{n-k}[2k-n]_qe_{k(n-k)}.
\end{equation}
\end{corollary}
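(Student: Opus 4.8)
The plan is to apply equation \eqref{eq:kchainfixed} with $\gamma=ab$ and $\epsilon=00$, which gives $X_{K_{ab}}(\bm x;q)=\sum_{\mathcal F\in\text{Fix}(K_{ab})}q^{\text{weight}(\mathcal F)}e_{\text{type}(\mathcal F)}$, and then to evaluate this sum by enumerating $\text{Fix}(K_{ab})$ explicitly. Here $n=a+b-1$, the vertices split into the first clique $[a]$ and the second clique $\{a,a+1,\dots,n\}$, and in Definition \ref{def:kchainfixed} there is a single cut vertex $c_1=a$, together with $c_0=1$ and $c_2=n$.

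First I would determine the structure of a fixed forest triple $\mathcal F=(\mathcal T_1,\dots,\mathcal T_m)$. Vertex $1$ lies in $T_1$, and $a$ is a cut vertex of $K_{ab}$, so no subtree can contain vertices on both sides of $a$ without containing $a$ itself. A short case analysis on the positions of the vertices $1$, $a$, and $n$ among the $T_j$, using conditions (C1)--(C3) of Definition \ref{def:kchainfixed}, shows that $\mathcal F$ has at most two trees; that the tree $T$ containing both $a$ and $n$ contains all of $\{a,a+1,\dots,n\}$ together with a subset of $[a-1]$, the rest of $[a-1]$ (if nonempty) forming the other tree; and that, writing $k=|V(T)|$, one has $e_{\text{type}(\mathcal F)}=e_{k(n-k)}$ with $\max(a,b)\le k\le n$. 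There are exactly two families: $(\mathrm A')$, in which $1\in V(T)$ so that $T=T_1$ and the $r$-parameters satisfy only $1\le r_i\le|V(T_i)|$; and $(\mathrm B_1)$, in which $1$ lies in the smaller tree $T_1$ while $T=T_2$, so that (C2) forces $r_2\ge a$. I expect this classification to be the most delicate step: one must rule out several a priori possible configurations, such as $a$ and $n$ lying in distinct trees, and read off the $r$-constraints precisely.

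Next I would compute, for each fixed $k$, the total $q$-weight of each family. By Lemma \ref{lem:treelistofkchain}, $\text{list}(T)$ is a permutation of $V(T)\cap[a]$ followed by a permutation of $\{a+1,\dots,n\}$ coming from a spanning decreasing subtree of the second clique, and by Lemma \ref{lem:sumovertreelists} the latter contributes a factor $[b-1]_q!$. Concatenating the first-clique data of $\mathcal F$ and expanding $\text{inv}_G(\text{list}(T_1)\cdot\text{list}(T_2))$ block by block -- the cut vertex $a$ contributing only a controlled power-of-$q$ shift -- the remaining sum over first-clique data factors into the internal orders of the two first-clique trees, each contributing a $q$-factorial by Lemma \ref{lem:sumovertreelists}, and the inversions between the two first-clique parts, which depend only on the underlying subset of $\{2,\dots,a-1\}$ and contribute a $q$-binomial by Lemma \ref{lem:qbinom}. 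Together with the $r$-sums, which give $[k]_q[n-k]_q$ in family $(\mathrm A')$ and $[n-k]_q\,q^{a-1}[k-a+1]_q$ in family $(\mathrm B_1)$, the $q$-binomial collapses against the $q$-factorials (using $a-1-k+b=n-k$), and the contributions at a fixed $k$ with $\max(a,b)\le k\le n-1$ become $[a-2]_q![b-1]_q!\,q^{n-k}[k]_q[k-b]_q\,e_{k(n-k)}$ from $(\mathrm A')$ and $[a-2]_q![b-1]_q!\,q^{a-1}[n-k]_q[k-a+1]_q\,e_{k(n-k)}$ from $(\mathrm B_1)$.

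Finally I would add the two families. For $\max(a,b)\le k\le n-1$ the coefficient of $e_{k(n-k)}$ becomes
\[
[a-2]_q![b-1]_q!\bigl(q^{n-k}[k]_q[k-b]_q+q^{a-1}[n-k]_q[k-a+1]_q\bigr),
\]
and using $[x]_q+q^x[y]_q=[x+y]_q$ repeatedly together with $n-b=a-1$ (so that $[n-k]_q+q^{n-k}[k-b]_q=[a-1]_q$ and $[k-b]_q+q^{k-b}[k-a+1]_q=[2k-n]_q$) this simplifies to $[a-1]_q![b-1]_q!\,q^{n-k}[2k-n]_q$. The case $k=n$, where $\mathcal F$ is a single spanning tree with $r_1$ unconstrained, follows directly from Lemma \ref{lem:sumovertreelists} applied to $A=[n]$ and gives $[a-1]_q![b-1]_q!\,[n]_q\,e_n$, which is the $k=n$ term of the claimed sum. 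Summing over $k$ from $\max(a,b)$ to $n$ completes the proof. The two substantial points are the case analysis identifying $\text{Fix}(K_{ab})$ and the block-wise inversion bookkeeping; the rest is routine $q$-integer manipulation, which can be cross-checked against the bowtie graph $K_{33}$ in \eqref{eq:bowtiee}.
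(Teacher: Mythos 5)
Your proposal is correct and follows essentially the same route as the paper's own proof: you invoke the fixed-point formula \eqref{eq:kchainfixed}, classify the fixed forest triples of $K_{ab}$ into the two families according to whether $1$ lies in the tree containing $a$ and $n$, compute the $q$-weighted contribution of each family using Lemmas \ref{lem:treelistofkchain}, \ref{lem:sumovertreelists}, and \ref{lem:qbinom}, and combine the two families by the same $q$-integer manipulations. The intermediate expressions $[a-2]_q![b-1]_q!\,q^{n-k}[k]_q[k-b]_q$ and $[a-2]_q![b-1]_q!\,q^{a-1}[n-k]_q[k-a+1]_q$ match the paper's exactly, and your final simplification to $[a-1]_q![b-1]_q!\,q^{n-k}[2k-n]_q$ is the same algebra the paper performs without showing intermediate steps.
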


\begin{proof}
We count fixed forest triples $\mathcal F\in\text{Fix}(K_{ab})$ by weight. Let $\mathcal T=(T,\alpha,r)$ be the tree triple of $\mathcal F$ with $n\in V(T)$ and let $k=|V(T)|$. We must have $a\in V(T)$, otherwise $V(T)\subseteq\{a+1,\ldots,n\}$ and $r\leq k\leq b-1$, while (C2) requires $r\geq b$. Now by (C3), we must have $\{a,\ldots,n\}\subseteq V(T)$ and $k\geq b$. If $1\notin V(T)$, (C2) implies that $k\geq r\geq a$, while if $1\in V(T)$, (C3) again implies that $k\geq a$, so we must have $k\geq\max\{a,b\}$. By (C1), there is at most one other tree triple $\mathcal T'=(T',\alpha',r')$ in $\mathcal F$, and we have $|V(T')|=n-k$ and $V(T')\subseteq [a-1]$. We now consider the possibilities of $\mathcal F$ with $1\in V(T')$ and with $1\in V(T)$.\\

If $1\in V(T')$, then we must have $r\geq a$ by (C2). By using Lemma \ref{lem:sumovertreelists} to account for $G$-inversions within the trees $T$ and $T'$ and using Lemma \ref{lem:qbinom} to account for $G$-inversions between trees $T'$ and $T$, the $q$-weighted sum over these fixed points is 

\begin{align}\label{eq:nonnestingsum2}
\sum_{\substack{\mathcal F\in\text{Fix}(K_{ab}) \\ |V(T)|=k \\1\in V(T')}}q^{\text{weight}(\mathcal F)}&=
\binom{a-2}{k-b}_q[n-k-1]_q![k-b]_q![b-1]_q![n-k]_q([k]_q-[a-1]_q)\\\nonumber&=q^{a-1}[a-2]_q![b-1]_q![n-k]_q[k-a+1]_q.
\end{align}
If $1\in V(T)$, then we can have any $1\leq r\leq k$. We have $(n-k)$ $G$-inversions between vertex $n\in V(T)$ and the vertices in $T'$. By Lemma \ref{lem:sumovertreelists} and Lemma \ref{lem:qbinom}, we have
\begin{align}\label{eq:nestingsum2}
\sum_{\substack{\mathcal F\in\text{Fix}(K_{ab}) \\ |V(T)|=k\\ 1\in V(T)}}q^{\text{weight}(\mathcal F)}&=
q^{n-k}\binom{a-2}{n-k}_q[n-k-1]_q![k-b]_q![b-1]_q![n-k]_q[k]_q\\\nonumber&=q^{n-k}[a-2]_q![b-1]_q![k-b]_q[k]_q.
\end{align}
Now by summing \eqref{eq:nonnestingsum2} and \eqref{eq:nestingsum2} over all $\max\{a,b\}\leq k\leq n$, we have as desired
\begin{align*}
X_{K_{ab}}(\bm x;q)&=[a-2]_q![b-1]_q!\sum_{k=\max\{a,b\}}^n(q^{a-1}[n-k]_q[k-a+1]_q+q^{n-k}[k-b]_q[k]_q)e_{k(n-k)}\\&=[a-1]_q![b-1]_q!\sum_{k=\max\{a,b\}}^nq^{n-k}[2k-n]_qe_{k(n-k)}.
\end{align*}
\end{proof}

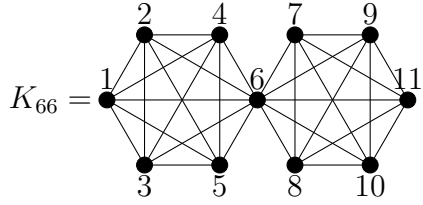
\begin{figure}
\caption{\label{fig:kchain66example} The $K$-chain $K_{66}$ and the chromatic quasisymmetric function $X_{K_{66}}(\bm x;q)$}
\begin{tikzpicture}
\draw (1.25,0) node (){$K_{66}=$};
\filldraw (2,0) circle (3pt) node[align=center,above] (5){1};
\filldraw (2.5,0.866) circle (3pt) node[align=center,above] (6){2};
\filldraw (3.5,0.866) circle (3pt) node[align=center,above] (7){4};
\filldraw (2.5,-0.866) circle (3pt) node[align=center,below] (8){3};
\filldraw (3.5,-0.866) circle (3pt) node[align=center,below] (9){5};
\filldraw (4,0) circle (3pt) node[align=center,above] (10){6};

\draw (2,0)--(6,0);
\draw (2,0) -- (2.5,0.866) -- (3.5,0.866) -- (4,0) -- (3.5,-0.866) -- (2.5,-0.866) -- (2,0);
\draw (2,0) -- (3.5,-0.866) -- (3.5,0.866) -- (2,0);
\draw (4,0) -- (2.5,0.866) -- (2.5,-0.866) -- (4,0);
\draw (2.5,0.866) -- (3.5,-0.866);
\draw (2.5,-0.866) -- (3.5,0.866);
\filldraw (4.5,0.866) circle (3pt) node[align=center,above] (6){7};
\filldraw (5.5,0.866) circle (3pt) node[align=center,above] (7){9};
\filldraw (4.5,-0.866) circle (3pt) node[align=center,below] (8){8};
\filldraw (5.5,-0.866) circle (3pt) node[align=center,below] (9){10};
\filldraw (6,0) circle (3pt) node[align=center,above] (10){11};

\draw (4,0) -- (4.5,0.866) -- (5.5,0.866) -- (6,0) -- (5.5,-0.866) -- (4.5,-0.866) -- (4,0);
\draw (4,0) -- (5.5,-0.866) -- (5.5,0.866) -- (4,0);
\draw (6,0) -- (4.5,0.866) -- (4.5,-0.866) -- (6,0);
\draw (4.5,0.866) -- (5.5,-0.866);
\draw (4.5,-0.866) -- (5.5,0.866);
\end{tikzpicture}
\begin{equation}
X_{K_{66}}(\bm x;q)=[5]_q![5]_q!(q^5e_{65}+q^4[3]_qe_{74}+q^3[5]_qe_{83}+q^2[7]_qe_{92}+q[9]_qe_{(10)1}+[11]_qe_{(11)}).
\end{equation}
\end{figure}

We now state the general formula for any almost-$K$-chain. 

\begin{definition}
Let $\gamma=\gamma_1\cdots\gamma_\ell$ be a composition with all parts at least $2$ and let $\epsilon=\epsilon_1\cdots\epsilon_\ell$ be a list of $0$'s and $1$'s. We define $A_\gamma^\epsilon$ to be the set of weak compositions $\alpha=\alpha_1\cdots\alpha_{\ell+1}$ of length $\ell+1$ and size $|\gamma|-\ell+1$ such that $\alpha_1\geq 1$ and for each $2\leq i\leq \ell+1$ we have either
\begin{align}\label{eq:Acondition1}
\alpha_i<\gamma_{i-1}-\epsilon_{i-1}-1&\text{ and }\alpha_i+\cdots+\alpha_{\ell+1}<\gamma_i+\cdots+\gamma_\ell-(\ell-i),\text{ or }\\\label{eq:Acondition2}
\alpha_i\geq\gamma_{i-1}-\epsilon_{i-1}&\text{ and }\alpha_i+\cdots+\alpha_{\ell+1}\geq\gamma_i+\cdots+\gamma_\ell-(\ell-i).
\end{align}
\end{definition}

\begin{remark}\label{rem:alphaell=0}
Note that for $i=\ell+1$, the conditions are that $\alpha_{\ell+1}=0$ or $\alpha_{\ell+1}\geq\gamma_\ell-\epsilon_\ell$. In particular, if $\epsilon_\ell=0$, we must have $\alpha_{\ell+1}=0$, otherwise these conditions will force $\alpha_{\ell+1}\geq\gamma_\ell$ and $\alpha_i\geq\gamma_{i-1}-1$ for every $2\leq i\leq\ell$, but then $|\alpha|\geq|\gamma|-\ell+2$.
\end{remark}

\begin{example}
If $\gamma=abc$ and $\epsilon=000$, then $A_{abc}^{000}$ is the set of weak compositions $\alpha=\alpha_1\alpha_2\alpha_3\alpha_4$ such that $\alpha_1+\alpha_2+\alpha_3+\alpha_4=a+b+c-2$, $\alpha_1\geq 1$, $\alpha_4=0$ by Remark \ref{rem:alphaell=0}, and
\begin{align}
\alpha_2<a-1\text{ and }\alpha_2+\alpha_3<b+c-1,&\text{ or }\alpha_2\geq a\text{ and }\alpha_2+\alpha_3\geq b+c-1,\\
\alpha_3<b-1\text{ and }\alpha_3<c,&\text{ or }\alpha_3\geq b\text{ and }\alpha_3\geq c.
\end{align}
\end{example}

\begin{corollary}\label{cor:kchainexplicit}
The chromatic quasisymmetric function of an almost-$K$-chain $K_\gamma^\epsilon$ is 
\begin{equation}
\label{eq:kchainexplicit}
X_{K_\gamma^\epsilon}(\bm x;q)=[\gamma_1-2]_q!\cdots[\gamma_\ell-2]_q!\sum_{\alpha\in A_\gamma^\epsilon}[\alpha_1]_q\prod_{i=2}^{\ell+1}q^{m_i}[|\alpha_i-(\gamma_{i-1}-1-\epsilon_{i-1})|]_qe_{\text{sort}(\alpha)},
\end{equation}
where $\ell=\ell(\gamma)$ and $m_i=\min\{\alpha_i,\gamma_{i-1}-1-\epsilon_{i-1}\}$. 
\end{corollary}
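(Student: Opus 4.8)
The plan is to start from the second equality in \eqref{eq:kchainfixed}, which writes $X_{K_\gamma^\epsilon}(\bm x;q)=\sum_{\mathcal F\in\text{Fix}(K_\gamma^\epsilon)}q^{\text{weight}(\mathcal F)}e_{\text{type}(\mathcal F)}$, and to carry out the enumeration of $\text{Fix}(K_\gamma^\epsilon)$ by weight and type exactly as in the two-clique case treated just above, but now bookkeeping across all $\ell=\ell(\gamma)$ cliques. (Alternatively one can induct on $\ell$, using the recursive description of $\text{Fix}$ obtained in the proof of the preceding corollary applied to $G'=K_{\gamma_2}^{\epsilon_2}+\cdots+K_{\gamma_\ell}^{\epsilon_\ell}$; I expect the direct structural count to be cleaner, since it parallels the $K_{ab}$ argument and avoids needing a refined induction hypothesis tracking $\alpha^{(1)}_1$.)

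First I would analyze the structure of a fixed forest triple $\mathcal F$ relative to the cut vertices $c_0=1<c_1<\cdots<c_\ell=n$ of Definition \ref{def:kchainfixed}. Using Lemma \ref{lem:treelistofkchain} together with (C1)--(C3), one shows that within the $t$-th clique $K_{\gamma_t}^{\epsilon_t}$ (on vertices $c_{t-1},\ldots,c_t$) the trees of $\mathcal F$ form at most two ``segments'': a portion of the tree $\hat T_t$ containing $c_t$, and at most one other tree touching this block. Moreover $\hat T_t$ either contains $c_{t-1}$ --- in which case it equals the tree $\hat T_{t-1}$ containing $c_{t-1}$ and, by (C3), has at least $\gamma_t$ vertices among $\{v:v\ge c_{t-1}\}$ --- or it does not, in which case by (C2) we have $|V(\hat T_t)|\ge\gamma_t-\epsilon_t$ and the tree through $c_{t-1}$ is the ``other'' tree of this block. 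Recording the sizes of these trees produces a weak composition $\alpha=\alpha_1\cdots\alpha_{\ell+1}$ of $n=|\gamma|-\ell+1$, where $\alpha_1$ is the size of the tree containing vertex $1$ and, for $i\ge 2$, $\alpha_i$ is the number of vertices ``left at'' the vertex $c_{i-1}$ (on a detached tree, or as the non-backbone remnant). Noting that $\gamma_i+\cdots+\gamma_\ell-(\ell-i)$ equals $n-c_{i-1}+1$, i.e.\ the number of vertices $\ge c_{i-1}$, one checks that the dichotomy \eqref{eq:Acondition1}--\eqref{eq:Acondition2} is exactly the dichotomy at the vertex $c_{i-1}$: the ``$\ge$'' branch $\alpha_i\ge\gamma_{i-1}-\epsilon_{i-1}$ (with the suffix-sum condition forcing the first $i-1$ slots to use only vertices $<c_{i-1}$) corresponds to the backbone tree through $c_{i-1}$ being large, as enforced by (C2); the ``$<$'' branch corresponds to an earlier tree spanning past $c_{i-1}$ while a smaller tree of size $\alpha_i<\gamma_{i-1}-\epsilon_{i-1}-1$ is detached; and the boundary behaviour $\alpha_{\ell+1}\in\{0\}\cup\{\gamma_\ell-\epsilon_\ell,\gamma_\ell-\epsilon_\ell+1,\ldots\}$ (Remark \ref{rem:alphaell=0}) records whether $\hat T_\ell$ fails to reach $c_{\ell-1}$, which as in the two-clique case forces $\epsilon_\ell=1$.

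Next I would do the $q$-enumeration. For each clique $t$, summing over the tree-list orderings of its vertices contributes a product $\prod[b_i]_q$ by Lemma \ref{lem:sumovertreelists}; because the two block endpoints $c_{t-1}$ (the minimum) and $c_t$ (whose smaller-neighbour count is governed by $\epsilon_t$) are distinguished, this collapses to a factor $[\gamma_t-2]_q!$, accounting for the prefactor $[\gamma_1-2]_q!\cdots[\gamma_\ell-2]_q!$. The $G$-inversions between the detached tree and the backbone inside a clique are counted by a $q$-binomial coefficient via Lemma \ref{lem:qbinom}, and summing over the admissible values of each $r_i$ contributes a $q$-integer $\sum_r q^{r-1}=[\,\cdot\,]_q$; in particular the free choice of $r_1\in\{1,\ldots,\alpha_1\}$ gives the leading $[\alpha_1]_q$. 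Combining everything, the $q$-binomials partly cancel against residual $q$-factorials, and in each of the two branches of the $A_\gamma^\epsilon$-condition the surviving factor at index $i$ works out to $q^{m_i}[\,|\alpha_i-(\gamma_{i-1}-1-\epsilon_{i-1})|\,]_q$ with $m_i=\min\{\alpha_i,\gamma_{i-1}-1-\epsilon_{i-1}\}$, the two branches giving generating functions with the same closed form, which is the origin of the absolute value. Specializing to $\ell=2$ should recover the two-clique corollary, a useful sanity check.

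The main obstacle is the bookkeeping: proving that (C1)--(C3) genuinely force the clean at-most-two-segments-per-clique structure \emph{globally} (not merely locally around each $c_t$), matching every geometric configuration --- including all boundary cases with $\epsilon_t\in\{0,1\}$ and empty trees --- with the correct branch of \eqref{eq:Acondition1}--\eqref{eq:Acondition2}, and pinning down every $q$-exponent. The delicate point is verifying that the ``$<$'' and ``$\ge$'' branches, whose underlying counts look different (a small tree nested inside the backbone versus a large backbone tree with a detached piece), really do collapse to the single expression $q^{m_i}[\,|\alpha_i-(\gamma_{i-1}-1-\epsilon_{i-1})|\,]_q$. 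The two-clique computation above is the prototype; the work is to push it through $\ell$ cliques without index errors.
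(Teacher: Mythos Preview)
Your proposal is correct and follows essentially the same route as the paper: start from \eqref{eq:kchainfixed}, associate to each $\mathcal F\in\text{Fix}(K_\gamma^\epsilon)$ a weak composition $\alpha$ by recording at each cut vertex $c_{i-1}$ either the size of the tree through $c_{i-1}$ (when it does not contain $c_{i-2}$) or the size of the single ``other'' tree in that clique (when it does), verify that this bijects onto $A_\gamma^\epsilon$ via (C1)--(C3), and then $q$-enumerate clique by clique using Lemmas \ref{lem:sumovertreelists} and \ref{lem:qbinom} to obtain \eqref{eq:nonnestingsum} and \eqref{eq:nestingsum}, whose common closed form is $q^{m_i}[\gamma_{i-1}-2]_q!\,[\,|\alpha_i-(\gamma_{i-1}-1-\epsilon_{i-1})|\,]_q$. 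The paper's write-up makes the definition of $\alpha_i$ a bit more explicit than your ``non-backbone remnant'' phrasing, but the content is the same.
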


\begin{proof}
The idea is to associate a weak composition $\alpha\in A_\gamma^\epsilon$ to each fixed point $\mathcal F\in\text{Fix}(K_\gamma^\epsilon)$ and then to enumerate $q^{\text{weight}(\mathcal F)}$ over all $\mathcal F$ corresponding to a particular $\alpha$. Given a fixed forest triple $\mathcal F$, we define $\alpha=\alpha_1\cdots\alpha_{\ell+1}$ as follows. For $1\leq i\leq\ell+1$, let $T_i$ be the tree with $c_{i-1}\in V(T_i)$. Note that different indices $i$ may refer to the same tree because for example it is possible that $c_{i-2}\in V(T_i)$ as well. Let $\alpha_1=|V(T_1)|\geq 1$ be the size of the tree containing vertex $1$. Then for $2\leq i\leq\ell+1$, if $c_{i-2}\notin V(T_i)$, we let $\alpha_i=|V(T_i)|$ be the size of the tree $T_i$, while if $c_{i-2}\in V(T_i)$, then there is at most one other tree $T_i'$ with $C_i\cap V(T_i')\neq\emptyset$; we let $\alpha_i=|V(T_i')|$ be the size of this other tree $T_i'$ if there is one, otherwise we set $\alpha_i=0$. Note that all trees have been considered, so $|\alpha|=n=|\gamma|-\ell+1$. \\

If $c_{i-2}\notin V(T_i)$, meaning that $\alpha_i=|V(T_i)|$, then we must have $\alpha_i\geq \gamma_{i-1}-\epsilon_{i-1}$ by (C2) and the trees $T_j$ for $j\geq i$ must use all the vertices at least $c_{i-1}$, so we must have
\begin{equation}
\alpha_i+\cdots+\alpha_{\ell+1}\geq\gamma_i+\cdots+\gamma_\ell-(\ell-i).
\end{equation}
If $c_{i-2}\in V(T_i)$, meaning that $\alpha_i=|V(T_i')|$ is the size of the other tree $T_i'$ present in $C_{i-1}$, if any, then we must have $V(T_{i'})\subseteq\{c_{i-2}+1,\ldots,c_{i-1}-1\}$ so $\alpha_i<\gamma_{i-1}-1$, the tree $T_i$ must contain at least $\gamma_{i-2}$ vertices at least $c_{i-2}$ by (C3), and therefore the tree $T_i'$ along with the trees to the right of $T_i$ use the remaining vertices at least $c_{i-2}$, so we must have
\begin{equation}
\alpha_i+\cdots+\alpha_{\ell+1}<\gamma_i+\cdots+\gamma_{\ell}-(\ell-i).
\end{equation}
Also note that if $\epsilon_{i-1}=0$ and $\alpha_i=\gamma_{i-1}-\epsilon_{i-1}-1$, then the summand in \eqref{eq:kchainexplicit} is zero so we can equivalently replace the condition $\alpha_i<\gamma_{i-1}-1$ by the condition $\alpha_i<\gamma_{i-1}-\epsilon_{i-1}-1$, and therefore the weak compositions that arise are exactly those $\alpha\in A_\gamma^\epsilon$. We now enumerate $q^{\text{weight}(\mathcal F)}$ over all $\mathcal F$ associated to a particular $\alpha\in A_\gamma^\epsilon$. The factor $[\alpha_1]_q$ accounts for the possible choices of $1\leq r_1\leq\alpha_1$. Now for $2\leq i\leq\ell+1$, we consider the contributions given by the choices for $r_i$ and the inversion-weighted choices of how to join vertices in the clique $C_{i-1}$.\\

If $c_{i-2}\notin V(T_i)$, then we must have $r_i\geq\gamma_{i-1}-\epsilon_{i-1}$. Let $k=\alpha_1+\cdots+\alpha_{i-1}-c_{i-2}+1$ be the number of vertices of $T_i$ in $C_{i-1}$. Then by Lemma \ref{lem:sumovertreelists} and Lemma \ref{lem:qbinom}, the contribution given by the possible choices in $C_{i-1}$ is
\begin{align}
\label{eq:nonnestingsum} 
\binom{\gamma_{i-1}-2}{k-1}_q&[k-1]_q![\gamma_{i-1}-k-1]_q!([\alpha_i]_q-[\gamma_{i-1}-1-\epsilon_{i-1}]_q)\\\nonumber&=q^{\gamma_{i-1}-1-\epsilon_{i-1}}[\gamma_{i-1}-2]_q![\alpha_i-(\gamma_{i-1}-1-\epsilon_{i-1})]_q.
\end{align}

If $c_{i-2}\in V(T_i)$, then we can have any $1\leq r_i\leq \alpha_i$. We have $\alpha_i$ $G$-inversions between vertex $c_{i-1}\in V(T_i)$ and the vertices in $T_{i'}$. By Lemma \ref{lem:sumovertreelists} and Lemma \ref{lem:qbinom}, the contribution given by the possible choices in $C_{i-1}$ is
\begin{align}
\label{eq:nestingsum}
q^{\alpha_i}\binom{\gamma_{i-1}-2}{\alpha_i}_q&[\alpha_i-1]_q![\gamma_{i-1}-2-\alpha_i]_q![\gamma_{i-1}-1-\epsilon_{i-1}-\alpha_i]_q[\alpha_i]_q\\\nonumber&=q^{\alpha_i}[\gamma_{i-1}-2]_q![(\gamma_{i-1}-1-\epsilon_{i-1})-\alpha_i]_q.
\end{align}
Note that if $\alpha_i=0$, meaning there is no tree $T_i'$ present, the expression in \eqref{eq:nestingsum} is still valid. Therefore, in either case, we can use the absolute value to express the contribution as 
\begin{align}
q^{m_i}[\gamma_{i-1}-2]_q![|\alpha_i-(\gamma_{i-1}-1-\epsilon_{i-1})|]_q.
\end{align}
Multiplying these contributions over all $2\leq i\leq \ell+1$, we have as desired
\begin{align}
X_{K_\gamma^{\epsilon}}&(\bm x;q)=\sum_{\alpha\in A_\gamma^\epsilon}[\alpha_1]_q\prod_{i=2}^{\ell+1}q^{m_i}[\gamma_{i-1}-2]_q![|\alpha_i-(\gamma_{i-1}-1-\epsilon_{i-1})|]_qe_{\text{sort}(\alpha)}\\\nonumber&=[\gamma_1-2]_q!\cdots[\gamma_\ell-2]_q!\sum_{\alpha\in A_\gamma^\epsilon}[\alpha_1]_q\prod_{i=2}^{\ell+1}q^{m_i}[|\alpha_i-(\gamma_{i-1}-1-\epsilon_{i-1})|]_qe_{\text{sort}(\alpha)}.
\end{align}

\end{proof}

Our explicit formula also allows us to confirm $e$-unimodality for almost-$K$-chains. We make the following elementary observation.

\begin{observation}
Let $a(q)=\sum_{k=i}^ja_kq^k\in\mathbb N[q]$ and $b(q)=\sum_{k=i'}^{j'}b_kq^k\in\mathbb N[q]$ be positive palindromic unimodal polynomials with centers of symmetry $\frac{i+j}2$ and $\frac{i'+j'}2$. 
\begin{enumerate}
\item [(O1)] The product $a(q)b(q)$ is a positive palindromic unimodal polynomial with center of symmetry $\frac{i+j}2+\frac{i'+j'}2$.
\item [(O2)] If $\frac{i+j}2=\frac{i'+j'}2$, then the sum $a(q)+b(q)$ is a positive palindromic unimodal polynomial with the same center of symmetry.
\end{enumerate}
\end{observation}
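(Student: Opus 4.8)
The plan is to reduce everything to one elementary fact about a positive palindromic unimodal polynomial $p(q)=\sum_k a_kq^k$ with center $c$: its coefficient sequence (padded with zeros) is nondecreasing on the integers $k\le c$ and nonincreasing on $k\ge c$. This follows in one line: if $a_{k-1}>a_k$ for some integer $k\le c$, then unimodality forces the peak index $P$ to satisfy $P\le k-1$, while palindromicity gives $a_{2c-k+1}>a_{2c-k}$, forcing $P>2c-k$; combining $2c-k<P\le k-1$ yields $c<k$, a contradiction. I would state this as a preliminary remark, valid whether or not $c$ is an integer.

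First I would dispatch (O2), the easy part. Writing $a(q)+b(q)=\sum_k(a_k+b_k)q^k$ with coefficients extended by zero, palindromicity about the common center $c$ is immediate from the two given palindromicity relations, positivity is clear, and unimodality follows because for $k\le c$ the preliminary fact gives $a_{k-1}\le a_k$ and $b_{k-1}\le b_k$, hence $a_{k-1}+b_{k-1}\le a_k+b_k$, with the symmetric statement for $k\ge c$. I would then note that (O2) extends by an obvious induction to any finite family of positive palindromic unimodal polynomials all sharing the same center.

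For (O1), positivity is trivial and palindromicity with center $\tfrac{i+j}2+\tfrac{i'+j'}2$ is immediate from $a(q)=q^{i+j}a(q^{-1})$ and $b(q)=q^{i'+j'}b(q^{-1})$. The content is unimodality, and the route is the standard decomposition into symmetric blocks $q^{p}+q^{p+1}+\cdots+q^{p'}$. I would prove two sublemmas: (i) every positive palindromic unimodal polynomial centered at $c$, say with support $[i,j]$, equals $\sum_m\lambda_m B_m$ where $B_m=q^{i+m}+\cdots+q^{j-m}$ (a block centered at $c$) and $\lambda_m=a_{i+m}-a_{i+m-1}\ge 0$ by the preliminary fact — the identity is checked by telescoping on coefficients of index $\le c$ and then forced on the remaining coefficients by palindromicity about $c$; and (ii) the product of a block centered at $c$ and a block centered at $c'$ is a positive palindromic unimodal polynomial centered at $c+c'$, which after pulling out powers of $q$ reduces to the textbook computation that $(1+q+\cdots+q^s)(1+q+\cdots+q^t)$ (with $s\le t$) has coefficient sequence $1,2,\ldots,s{+}1,\ldots,s{+}1,\ldots,2,1$, plainly palindromic and unimodal. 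Then, writing $a=\sum_m\lambda_m B_m$ and $b=\sum_n\mu_n B'_n$, we get $a(q)b(q)=\sum_{m,n}\lambda_m\mu_n\,B_m(q)B'_n(q)$, a nonnegative combination of positive palindromic unimodal polynomials all centered at $\tfrac{i+j}2+\tfrac{i'+j'}2$, and we conclude by the extended form of (O2).

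The only real obstacle is unimodality of the product in (O1); the rest is bookkeeping. The block-decomposition argument is the cleanest approach — attempting the coefficient inequality $(ab)_{k-1}\le(ab)_k$ directly is much messier. I would be careful to treat half-integer centers uniformly (allowing blocks with an even number of terms whose center lies between two integers) so that no parity case split is needed anywhere in the argument.
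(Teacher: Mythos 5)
The paper states this as an unproved \emph{Observation}, treating it as elementary and implicitly deferring to the standard literature (the cited survey on log-concavity and unimodality contains exactly this fact), so there is no in-paper argument to compare against. Your proof is correct and complete, and it is precisely the standard argument: your ``preliminary fact'' cleanly handles the interaction between palindromicity and unimodality in a way that dispatches (O2) immediately; your block decomposition $a=\sum_m\lambda_m B_m$ with $\lambda_m=a_{i+m}-a_{i+m-1}\ge 0$ is well defined for all relevant $m\le\frac{j-i}{2}$ by that same fact, telescopes correctly, and reduces (O1) to the product of two blocks plus the finite extension of (O2). The only thing worth flagging is stylistic: you correctly anticipate that the two-block product computation is the crux and that a direct attack on $(ab)_{k-1}\le(ab)_k$ would be messy, which is exactly why the author chose to leave it as an observation rather than spell it out. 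No gaps.
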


\begin{corollary}\label{cor:kchainuni}
The chromatic quasisymmetric function of an almost-$K$-chain $K_\gamma^\epsilon$ is $e$-positive and $e$-unimodal.
\end{corollary}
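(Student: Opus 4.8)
The plan is to deduce everything from the explicit formula~\eqref{eq:kchainexplicit} of Corollary~\ref{cor:kchainexplicit} together with the Observation. First, $e$-positivity is immediate, since every coefficient on the right-hand side of~\eqref{eq:kchainexplicit} is visibly a polynomial in $q$ with non-negative coefficients. For $e$-unimodality, I would fix a partition $\mu$ and read off from~\eqref{eq:kchainexplicit} that the coefficient of $e_\mu$ in $X_{K_\gamma^\epsilon}(\bm x;q)$ is
\[
c_\mu(q)=[\gamma_1-2]_q!\cdots[\gamma_\ell-2]_q!\sum_{\substack{\alpha\in A_\gamma^\epsilon\\\text{sort}(\alpha)=\mu}}[\alpha_1]_q\prod_{i=2}^{\ell+1}q^{m_i}[|\alpha_i-(\gamma_{i-1}-1-\epsilon_{i-1})|]_q,
\]
and the goal is to show that $c_\mu(q)$ is a positive palindromic unimodal polynomial. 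I would do this in three steps: show that each single $\alpha$-summand is positive palindromic unimodal; show that all of these summands share the same center of symmetry; then conclude with (O1) and (O2).

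For the first step, recall that $[k]_q$ is positive palindromic unimodal with center $\tfrac{k-1}2$ for every $k\geq 1$, so $q^m[k]_q$ is positive palindromic unimodal with center $m+\tfrac{k-1}2$, and a $q$-factorial $[j]_q!=[j]_q\cdots[1]_q$ is therefore positive palindromic unimodal by (O1). The conditions~\eqref{eq:Acondition1}--\eqref{eq:Acondition2} defining $A_\gamma^\epsilon$ force $\alpha_i\neq\gamma_{i-1}-1-\epsilon_{i-1}$, so each factor $q^{m_i}[|\alpha_i-(\gamma_{i-1}-1-\epsilon_{i-1})|]_q$ is a genuine nonzero such polynomial, and since $\alpha_1\geq 1$ the factor $[\alpha_1]_q$ is too; hence by (O1) the $\alpha$-summand of $c_\mu(q)$, being a product of these factors and of the $q$-factorials, is positive palindromic unimodal.

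For the second step, write $d_j=\gamma_j-1-\epsilon_j$, so that $m_i=\min\{\alpha_i,d_{i-1}\}$. If $\alpha_i\geq d_{i-1}+1$, then $m_i=d_{i-1}$ and the center of $q^{m_i}[|\alpha_i-d_{i-1}|]_q$ is $d_{i-1}+\tfrac{(\alpha_i-d_{i-1})-1}2=\tfrac{\alpha_i+d_{i-1}-1}2$; if $\alpha_i\leq d_{i-1}-1$, then $m_i=\alpha_i$ and this center equals $\alpha_i+\tfrac{(d_{i-1}-\alpha_i)-1}2=\tfrac{\alpha_i+d_{i-1}-1}2$ just the same. Hence, adding the center $\tfrac{\alpha_1-1}2$ of $[\alpha_1]_q$ and the $\alpha$-independent center $c$ of $[\gamma_1-2]_q!\cdots[\gamma_\ell-2]_q!$, the center of symmetry of the $\alpha$-summand of $c_\mu(q)$ is, by (O1),
\[
c+\frac{\alpha_1-1}2+\sum_{i=2}^{\ell+1}\frac{\alpha_i+d_{i-1}-1}2=c+\frac{|\alpha|-(\ell+1)+\sum_{j=1}^{\ell}d_j}{2}.
\]
Since $|\alpha|=|\gamma|-\ell+1$ and $\sum_{j=1}^{\ell}d_j$ do not depend on $\alpha$, this center is one and the same for every $\alpha\in A_\gamma^\epsilon$; a short computation shows it equals $\tfrac{|E(K_\gamma^\epsilon)|}2$, consistent with the palindromicity of $X_{K_\gamma^\epsilon}(\bm x;q)$, but this is not needed. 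Finally, applying (O2) repeatedly to the summands with $\text{sort}(\alpha)=\mu$ — positive palindromic unimodal polynomials with this common center — shows $c_\mu(q)$ is positive palindromic unimodal, and since $\mu$ is arbitrary, $X_{K_\gamma^\epsilon}(\bm x;q)$ is $e$-positive and $e$-unimodal. The one nonformal step is the center computation in the second step, whose whole point is that the two alternatives~\eqref{eq:Acondition1} and~\eqref{eq:Acondition2} are arranged so that each factor contributes the same center $\tfrac{\alpha_i+d_{i-1}-1}2$ regardless of which alternative holds; once that is seen, unimodality of each $c_\mu(q)$ drops out of the Observation.
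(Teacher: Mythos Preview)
Your proof is correct and follows essentially the same approach as the paper: both arguments extract $c_\mu(q)$ from Corollary~\ref{cor:kchainexplicit}, observe that each factor $q^{m_i}[|\alpha_i-d_{i-1}|]_q$ has center of symmetry $\tfrac{\alpha_i+d_{i-1}-1}2$ regardless of which alternative \eqref{eq:Acondition1} or \eqref{eq:Acondition2} holds, and then apply (O1) and (O2) to conclude. The only cosmetic difference is that the paper factors the $q$-factorials out of the sum and applies (O1) to them at the end, whereas you fold them into each summand from the start; either ordering works.
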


\begin{proof}
By \eqref{eq:kchainexplicit}, the coefficient of $e_\mu$ in $X_{K_\gamma^\epsilon}(\bm x;q)$ is 
\begin{equation}\label{eq:kchaincmuq}
c_\mu(q)=[\gamma_1-2]_q!\cdots[\gamma_\ell-2]_q!\sum_{\alpha\in A_\gamma^\epsilon: \ \text{sort}(\alpha)=\mu}[\alpha_1]_q\prod_{i=2}^{\ell+1}q^{m_i}[|\alpha_i-(\gamma_{i-1}-1-\epsilon_{i-1})|]_q.
\end{equation}
Note that regardless of whether $\alpha_i<\gamma_{i-1}-1-\epsilon_{i-1}$ or $\alpha_i\geq\gamma_{i-1}-1-\epsilon_{i-1}$, the factor $q^{m_i}[|\alpha_i-(\gamma_{i-1}-1-\epsilon_{i-1})|]_q$ is a positive palindromic unimodal polynomial with center of symmetry $\frac{\alpha_i+\gamma_{i-1}-2-\epsilon_{i-1}}2$. Therefore, by (O1), each summand in \eqref{eq:kchaincmuq} is a positive palindromic unimodal polynomial with center of symmetry
\begin{equation}
\frac{\alpha_1-1}2+\sum_{i=2}^{\ell+1}\frac{\alpha_i+\gamma_{i-1}-2-\epsilon_{i-1}}2=\frac{|\alpha|+|\gamma|-2\ell-1-|\epsilon|}2=\frac{2|\gamma|-3\ell-|\epsilon|}2,
\end{equation}
which does not depend on $\alpha$, and therefore we can apply (O2) to see that the sum is a positive palindromic unimodal polynomial with the same center of symmetry. Then we apply (O1) again to see that $c_\mu(q)$ is positive and unimodal.
\end{proof}

\section{Further directions}\label{section:further}

We conclude by proposing some further avenues of study for the Stanley--Stembridge conjecture. First, we can continue to seek sign-reversing involutions on forest triples to prove $e$-positivity and $e$-unimodality for other graphs. The author checked by computer that every natural unit interval graph with at most $9$ vertices has a nice involution.

\begin{problem}
Let $G$ be a natural unit interval graph. Find a nice involution for $G$ to prove that $X_G(\bm x;q)$ is $e$-positive. Furthermore, by enumerating the fixed points, find an explicit $e$-expansion to prove that $X_G(\bm x;q)$ is $e$-unimodal.
\end{problem}

%The author checked by computer that every natural unit interval graph with at most $10$ vertices has a nice involution.\\

For example, Dahlberg \cite[Theorem 4.5]{trilad} proved that for the class of \emph{triangular ladders}
\begin{equation}
P_{n,2}=([n],\{\{i,j\}: \ 1\leq j-2\leq i\leq j\leq n\}),
\end{equation}
the chromatic symmetric function $X_{P_{n,2}}(\bm x)$ is $e$-positive. We could try to strengthen this result by finding a nice involution to prove a quasisymmetric version. More specifically, because triangular ladders are built by successively joining a new vertex to two vertices, we could take an inductive approach.

\begin{problem}
Let $G=([n],E)$ be a natural unit interval graph with $\{1,2\}\in E$ and let $\varphi$ be a nice involution for $G$. Find a nice involution for the natural unit interval graph
\begin{equation}
([n+1],\{\{1,2\},\{1,3\}\}\cup\{\{i+1,j+1\}: \ \{i,j\}\in E\}).
\end{equation}
Furthermore, by enumerating the fixed points, prove that $X_{P_{n,2}}(\bm x;q)$ is $e$-unimodal. 
\end{problem}

Much of the progress toward the Stanley--Stembridge conjecture considers a particular natural unit interval graph $G$ and shows that every coefficient $c_\mu(q)$ of the $e$-expansion $X_G(\bm x;q)=\sum_\mu c_\mu(q)e_\mu$ is a positive polynomial. We could also consider a dual approach where we fix a particular partition $\mu$ and show positivity for every natural unit interval graph $G$. Upcoming work of Sagan and the author \cite{chrombasis} proves that $c_\mu(q)$ is positive whenever $\mu_1\leq 3$. Hwang \cite[Theorem 5.13]{chromtwoparthook} proved that $c_\mu(q)$ is positive whenever $\mu$ is a \emph{hook}, meaning that $\mu_2\leq 1$. In the $q=1$ case, Abreu and Nigro \cite[Corollary 1.10]{chromhesssplitting} proved that $c_\mu(1)$ is positive whenever $\mu=(n-k)k$ has two parts. We could try to strengthen this result by using forest triples to prove a quasisymmetric version. 

\begin{proposition}
Let $G=([n],E)$ be a natural unit interval graph, let $s(G)=[b_2]_q\cdots[b_n]_q$, using the notation of Example \ref{ex:coefn}, and for $1\leq k\leq n-1$, define 
\begin{equation}s_k(G)=\sum_{\substack{T_1,T_2\text{ dec. trees}, \ V(T_1)\sqcup V(T_2)=[n]\\1\in V(T_1), \  |V(T_2)|=k}}q^{\text{inv}_G(\text{list}(T_1)\cdot\text{list}(T_2))}.
\end{equation} Then for $1\leq k<\frac n2$, the coefficient of $e_{(n-k)k}$ in $X_G(\bm x;q)$ is 
\begin{align}
c_{(n-k)k}(q)&=[n-k]_q[k]_qs_k(G)+[n-k]_q[k]_qs_{n-k}(G)-([n-k]_q+[k]_q)s(G)\\\label{eq:twopartline}&=[n-k]_q([k]_qs_k(G)-s(G))+[k]_q([n-k]_qs_{n-k}(G)-s(G)).
\end{align}
If $k=\frac n2$, then both terms in \eqref{eq:twopartline} are equal and the coefficient is given by just one of them.
\end{proposition}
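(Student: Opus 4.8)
The plan is to apply Theorem~\ref{thm:foresttriples} and collect the forest triples of type $(n-k)k$. A forest triple of type $(n-k)k$ either consists of a single tree triple, or of two tree triples. If it consists of a single tree triple $\mathcal T=(T,\alpha,r)$, then $T$ is a decreasing spanning tree of $G$ and $\alpha$ is either $(n-k)k$ or $k(n-k)$; the sign is $-1$ and the weight is $\mathrm{inv}_G(\mathrm{list}(T))+(r-1)$. Summing $q^{\text{weight}}$ over these gives
\begin{equation}
-\left([n-k]_q+[k]_q\right)\sum_{T\text{ dec. sp. tree}}q^{\mathrm{inv}_G(\mathrm{list}(T))} = -\left([n-k]_q+[k]_q\right)s(G),
\end{equation}
where the equality $\sum_T q^{\mathrm{inv}_G(\mathrm{list}(T))}=s(G)=[b_2]_q\cdots[b_n]_q$ is Lemma~\ref{lem:sumovertreelists} applied with $A=[n]$, together with the bijection of Lemma~\ref{lem:treelistbijection}. (When $k=\frac n2$, both compositions coincide, giving the factor $[n-k]_q=[k]_q$ only once; this is the exceptional case noted in the statement.)

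If the forest triple consists of two tree triples $\mathcal T_1=(T_1,\alpha^{(1)},r_1)$, $\mathcal T_2=(T_2,\alpha^{(2)},r_2)$, then since the type is $(n-k)k$ with two parts, each composition is a single part, so $\{|V(T_1)|,|V(T_2)|\}=\{n-k,k\}$; the sign is $+1$. The constraint $\min V(T_1)<\min V(T_2)$ means $1\in V(T_1)$. There are two subcases: $|V(T_1)|=n-k$, $|V(T_2)|=k$, or $|V(T_1)|=k$, $|V(T_2)|=n-k$ (these are genuinely distinct when $k<\frac n2$). In the first subcase, $r_1$ ranges over $[n-k]$ and $r_2$ over $[k]$ independently, and $\mathrm{weight}=\mathrm{inv}_G(\mathrm{list}(T_1)\cdot\mathrm{list}(T_2))+(r_1-1)+(r_2-1)$, so the contribution is $[n-k]_q[k]_q\, s_k(G)$ by the definition of $s_k(G)$. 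Symmetrically the second subcase contributes $[n-k]_q[k]_q\, s_{n-k}(G)$. Adding the one- and two-tree contributions yields
\begin{equation}
c_{(n-k)k}(q)=[n-k]_q[k]_q s_k(G)+[n-k]_q[k]_q s_{n-k}(G)-\left([n-k]_q+[k]_q\right)s(G),
\end{equation}
and the second displayed form in the statement is just an algebraic regrouping. For $k=\frac n2$, the one-tree term contributes $-[k]_q s(G)$ (one composition rather than two) and the two two-tree subcases collapse into one ($s_k(G)=s_{n-k}(G)$), giving exactly one of the two bracketed terms in \eqref{eq:twopartline}.

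The only real care needed is bookkeeping: making sure the condition $1\in V(T_1)$ is correctly extracted from $\min V(T_1)<\min V(T_2)$, that $r_1,r_2$ genuinely vary independently over their full ranges $[\alpha^{(1)}_1]$ and $[\alpha^{(2)}_1]$ (which they do, by Definition~\ref{def:ft}), and that the degenerate case $k=n/2$ is handled separately since then the two compositions $(n-k)k$ and $k(n-k)$ coincide and $s_k(G)=s_{n-k}(G)$. I do not anticipate a genuine obstacle here; the proposition is essentially a direct specialization of Theorem~\ref{thm:foresttriples} to two-part types, with $s(G)$, $s_k(G)$, $s_{n-k}(G)$ introduced precisely as names for the relevant tree-list sums.
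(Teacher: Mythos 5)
Your proof is correct and follows the same approach as the paper: apply Theorem~\ref{thm:foresttriples}, split forest triples of type $(n-k)k$ into those with one tree triple (two choices of composition, contributing $-([n-k]_q+[k]_q)s(G)$ via Lemmas~\ref{lem:treelistbijection} and~\ref{lem:sumovertreelists}) and those with two tree triples (each a single-part composition, contributing $[n-k]_q[k]_q(s_k(G)+s_{n-k}(G))$). The paper states this more tersely but the decomposition, use of the lemmas, and handling of the degenerate $k=n/2$ case are the same.
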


\begin{proof}
Forest triples $\mathcal F\in\text{FT}_{(n-k)k}(G)$ come in one of two flavours. We could have a single tree triple $\mathcal T=(T,\alpha,r)$, where $T$ is a decreasing spanning tree, contributing a factor of $s(G)$ by Lemma \ref{lem:sumovertreelists}, and $\alpha$ is either $(n-k)k$ or $k(n-k)$. Alternatively, $\mathcal F$ can consist of two tree triples $\mathcal T=(T,\alpha,r)$ and $\mathcal T'=(T',\alpha',r')$ with trees of sizes $(n-k)$ and $k$ and compositions the single parts $(n-k)$ and $k$. Now the result follows from Theorem \ref{thm:foresttriples}.
\end{proof}

Therefore positivity of $c_{(n-k)k}(q)$ would follow from the following. The author checked by computer that this holds for every natural unit interval graph with at most $10$ vertices. 

\begin{problem}
Let $G=([n],E)$ be a natural unit interval graph. Prove that for every $1\leq k\leq n-1$, the polynomial $[k]_qs_k(G)-s(G)$ has positive coefficients.
\end{problem}

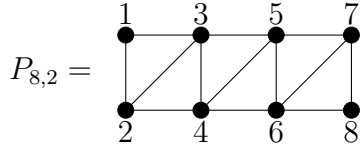
\begin{figure}
\caption{\label{fig:moregraphs} The triangular ladder $P_{8,2}$}
$$
\begin{tikzpicture}
\draw (-5,0.5) node (){$P_{8,2}=$};
\filldraw (-4,1) circle (3pt) node[align=center,above](1){1};
\filldraw (-4,0) circle (3pt) node[align=center,below](2){2};
\filldraw (-3,1) circle (3pt) node[align=center,above](3){3};
\filldraw (-3,0) circle (3pt) node[align=center,below](4){4};
\filldraw (-2,1) circle (3pt) node[align=center,above](5){5};
\filldraw (-2,0) circle (3pt) node[align=center,below](6){6};
\filldraw (-1,1) circle (3pt) node[align=center,above](7){7};
\filldraw (-1,0) circle (3pt) node[align=center,below](8){8};
\draw (-4,1)--(-1,1)--(-1,0)--(-4,0)--(-4,1) (-3,1)--(-3,0) (-2,1)--(-2,0) (-4,0)--(-3,1) (-3,0)--(-2,1) (-2,0)--(-1,1);
\end{tikzpicture}$$

\end{figure}

Another property that is actively studied in relation to unimodality is \emph{log-concavity} \cite{logconuni}. 

\begin{definition}
A polynomial $a(q)=\sum_{k=i}^ja_kq^k\in\mathbb N[q]$ is \emph{log-concave} if for every $k$, we have $a_k^2\geq a_{k-1}a_{k+1}$.  
\end{definition}

Note that if $a(q)$ is a positive log-concave polynomial, then it must be unimodal because we cannot have $a_{k-1}>a_k$ and $a_k<a_{k+1}$. Therefore, log-concavity is a stronger property than unimodality. Thank you to Bruce Sagan for suggesting the following Conjecture. The author checked by computer that this holds for every natural unit interval graph with at most $10$ vertices.

\begin{conjecture} \label{conj:logconcave}
Let $G$ be a natural unit interval graph. Then the chromatic quasisymmetric function $X_G(\bm x;q)$ is \emph{$e$-log-concave}, meaning that for every $\mu$, the coefficient $c_\mu(q)$ of $e_\mu$ is a log-concave polynomial.
\end{conjecture}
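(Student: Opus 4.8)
Since Conjecture \ref{conj:logconcave} is open, what follows is a plan of attack rather than a proof. The natural first target is the class of almost-$K$-chains, where the explicit formula of Corollary \ref{cor:kchainexplicit} is available. From \eqref{eq:kchaincmuq} we have $c_\mu(q)=[\gamma_1-2]_q!\cdots[\gamma_\ell-2]_q!\cdot S_\mu(q)$ with $S_\mu(q)=\sum_{\alpha}[\alpha_1]_q\prod_{i=2}^{\ell+1}q^{m_i}[\,|\alpha_i-(\gamma_{i-1}-1-\epsilon_{i-1})|\,]_q$, the sum being over $\alpha\in A_\gamma^\epsilon$ with $\text{sort}(\alpha)=\mu$. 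I would first record the log-concave analogues of the observations used to prove Corollary \ref{cor:kchainuni}: a $q$-integer $[k]_q$ is log-concave with no internal zeros, and a product of polynomials in $\mathbb{N}[q]$ each log-concave with no internal zeros is again log-concave with no internal zeros. These show that every individual summand of $S_\mu(q)$ is log-concave with no internal zeros, and that it suffices to prove $S_\mu(q)$ itself is log-concave, since the $q$-factorial prefactor is a product of $q$-integers.

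The crux is thus the log-concavity of $S_\mu(q)$, a sum of products of $q$-integers, and this is precisely where the argument for $e$-unimodality does \emph{not} carry over: log-concavity, unlike unimodality, is not preserved under addition, not even for palindromic summands sharing a centre of symmetry and having nested supports (for instance $q^2$ and $[5]_q$ are each palindromic about $q=2$ and log-concave, but their sum is not), and $c_\mu(q)$ itself need not factor through log-concave pieces. A proof must therefore compare individual coefficients. Two concrete strategies seem promising. The first is to group the compositions $\alpha\in A_\gamma^\epsilon$ with $\text{sort}(\alpha)=\mu$ — these are governed clique by clique by the dichotomy \eqref{eq:Acondition1}--\eqref{eq:Acondition2}, with at most one ``surplus'' tree per clique — and to assemble their contributions into a single explicit palindromic polynomial whose coefficients are then bounded directly; since all summands are products of $q$-integers, their supports are intervals of controlled width about the common centre identified in the proof of Corollary \ref{cor:kchainuni}, which should keep this finite. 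The second is an injective proof of log-concavity: using the combinatorial model $c_\mu(q)=\sum_{\mathcal F\in\text{Fix}(K_\gamma^\epsilon),\ \text{type}(\mathcal F)=\mu}q^{\text{weight}(\mathcal F)}$ from \eqref{eq:kchainfixed}, construct an explicit injection from pairs of fixed forest triples of type $\mu$ and weights $k-1$ and $k+1$ to pairs of weight $k$, by redistributing surplus vertices between adjacent cliques. Making such an injection well defined simultaneously across all $\ell$ cliques is where I expect the real difficulty to lie.

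For arbitrary natural unit interval graphs the conjecture is wide open, and I see two avenues. The combinatorial one is to continue the programme of Section \ref{section:kchains}: every natural unit interval graph with at most $9$ vertices has a nice involution, and if one can produce nice involutions whose fixed points carry an explicit weight-generating description, the log-concavity input above would apply uniformly. The geometric one is to invoke the Shareshian--Wachs correspondence, under which the coefficients of $c_\mu(q)$ record graded multiplicities in the cohomology of a regular semisimple Hessenberg variety with the dot action: Hard Lefschetz only yields unimodality, so one would instead need to extract log-concavity from the Hodge--Riemann relations, or from a Lorentzian-polynomial structure on the associated graded object. In every case the main obstacle is the same, namely the non-additivity of log-concavity: one cannot get by with palindromicity and a common centre of symmetry as in Corollary \ref{cor:kchainuni}, and a genuinely quantitative comparison $a_k^2\ge a_{k-1}a_{k+1}$ of the coefficients is required.
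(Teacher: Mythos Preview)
The statement is a \emph{conjecture}; the paper offers no proof, only the remark that it has been verified by computer for all natural unit interval graphs on at most $10$ vertices. You correctly recognise this and present a research outline rather than a proof, so there is nothing in the paper to compare against.

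Your outline is sound as far as it goes. The observations about $q$-integers and products of log-concave polynomials with no internal zeros are correct, and your counterexample $q^2+[5]_q=1+q+2q^2+q^3+q^4$ cleanly isolates why the unimodality argument of Corollary~\ref{cor:kchainuni} does not upgrade: the sum fails $a_1^2\ge a_0a_2$. The two strategies you sketch for almost-$K$-chains (direct coefficient analysis of $S_\mu(q)$, or an injection on pairs of fixed forest triples) are both reasonable starting points, and your assessment that the simultaneous coordination across all $\ell$ cliques is the hard part matches the structure of the problem. The broader remarks on nice involutions and on Hodge--Riemann or Lorentzian-polynomial methods are speculative but appropriate for an open problem. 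In short: no gap to flag, because no proof was claimed on either side.
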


Recall that if $G$ is not a natural unit interval graph, then $X_G(\bm x;q)$ is not generally symmetric in the $x_i$ variables, so it may not have an $e$-expansion. However, when $q=1$, the chromatic symmetric function $X_G(\bm x)=X_G(\bm x;1)$ is always symmetric and we can use a variant of forest triples, with a slightly different condition on our trees, to calculate a signed $e$-expansion. We conclude by describing this variant. Let $G=([n],E)$ be an arbitrary graph with a fixed total ordering $\lessdot$ of the edge set $E$.

\begin{definition}
A \emph{no-broken-circuit (NBC) tree} of $G$ is a subtree $T$ of $G$ such that $E(T)$ does not contain a subset of the form $C\setminus\{\max(C)\}$, called a \emph{broken circuit}, where $C$ is a set of edges that forms a cycle and $\max(C)$ is the largest edge of $C$ under $\lessdot$. 
\end{definition}

\begin{remark} If $G$ is a natural unit interval graph, we can define $\lessdot$ \emph{lexicographically}, meaning that for edges $\{i,j\}$ and $\{i',j'\}$ with $i<j$ and $i'<j'$, we define $\{i,j\}\lessdot\{i',j'\}$ if either $i<i'$, or $i=i'$ and $j<j'$. Then NBC trees are precisely decreasing trees. If $i\in V(T)$ has two larger neighbours $j<k$, then $\{j,k\}\in E(G)$ by \eqref{eq:uicondition} and $E(T)$ contains the broken circuit $\{\{i,j\},\{i,k\},\{j,k\}\}\setminus\{\{j,k\}\}$. Conversely, if $E(T)$ contains a broken circuit $C\setminus\{\max(C)\}$, then the smallest vertex in the edges of $C$ has two larger neighbours in $T$. 
\end{remark}

\begin{definition}
A \emph{tree triple} of $G$ is an object $\mathcal T=(T,\alpha,r)$ consisting of the following data.
\begin{itemize}
\item $T$ is an NBC tree of $G$.
\item $\alpha=\alpha_1\cdots\alpha_\ell$ is an integer composition with size $|\alpha|=|V(T)|$.
\item $r$ is a positive integer with $1\leq r\leq\alpha_1$.
\end{itemize}
A \emph{forest triple} of $G$ is a sequence of tree triples $\mathcal F=(\mathcal T_i=(T_i,\alpha^{(i)},r_i))_{i=1}^m$ such that each vertex of $G$ is in exactly one tree $T_i$ and we have
\begin{equation}
\min(V(T_1))<\min(V(T_2))<\cdots<\min(V(T_m)).
\end{equation}
We also define the \emph{type} and \emph{sign} of $\mathcal F$, and the sets $\text{FT}(G)$ and $\text{FT}_\mu(G)$, as in Definition \ref{def:ft}. 
\end{definition}

\begin{theorem}\label{thm:nbctriples}
The chromatic symmetric function $X_G(\bm x)$ of an arbitrary graph $G$ satisfies
\begin{equation}
X_G(\bm x)=\sum_{\mathcal F\in\text{FT}(G)}\text{sign}(\mathcal F)e_{\text{type}(\mathcal F)}=\sum_\mu\left(\sum_{\mathcal F\in\text{FT}_\mu(G)}\text{sign}(\mathcal F)\right)e_\mu.
\end{equation}
In particular, the $e$-expansion does not depend on the choice of total ordering $\lessdot$. 
\end{theorem}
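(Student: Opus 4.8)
The plan is to adapt the proof of Theorem \ref{thm:foresttriples}, replacing the LLT/plethysm machinery (which is specific to natural unit interval graphs) with the classical no-broken-circuit (NBC) expansion of the chromatic symmetric function. Recall Stanley's theorem that, for any total order $\lessdot$ on $E(G)$,
\begin{equation*}
X_G(\bm x)=\sum_{\theta\subseteq E(G)\ \text{NBC}}(-1)^{|\theta|}p_{\lambda(\theta)},
\end{equation*}
where the sum is over edge subsets containing no broken circuit, and $\lambda(\theta)$ is the partition recording the sizes of the connected components of $([n],\theta)$. (This is the power-sum NBC expansion; one can also think of it as Whitney's theorem combined with the broken-circuit reduction.) Equivalently, and more convenient for us, one can write $X_G(\bm x)=\sum_{\mathcal A}\bigl(\sum_{\theta\ \text{NBC},\ \pi(\theta)=\mathcal A}(-1)^{|\theta|}\bigr)p_{\lambda(\mathcal A)}$ where $\mathcal A$ ranges over set partitions of $[n]$ and $\pi(\theta)$ is the partition into connected components. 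The key combinatorial input is that an NBC edge set $\theta$ on a fixed vertex set $A$ with a single connected component is precisely a choice, for each non-minimal vertex of $A$, consistent with acyclicity and the broken-circuit condition; and by the Remark preceding the statement, when $G$ is a natural unit interval graph these spanning NBC structures biject with decreasing trees, which is what makes this a genuine generalization.

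First I would establish the $q=1$ analogue of Lemma \ref{lem:sumovertreelists} in the NBC setting: for any nonempty $A\subseteq[n]$, the number of NBC trees $T$ with $V(T)=A$ equals $\prod_{i\in A,\ i>\min(A)} b_i(A)$, where $b_i(A)$ counts broken-circuit-safe choices of a larger neighbour — in fact, by a deletion-contraction / induction on $|A|$ argument exactly parallel to the existing proof, this count factors, and at $q=1$ this is just the statement that NBC trees on $A$ decompose as independent choices vertex by vertex. More precisely, I would invoke the standard fact that the broken-circuit-free spanning trees of $G[A]$ are enumerated by a product formula coming from the signless chromatic polynomial evaluated appropriately; the cleanest route is: NBC forests on vertex set $[n]$ with component partition $\mathcal A$ are in bijection with tuples (NBC tree on each block), and this reduces Stanley's formula to a product over blocks.

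Next, using $X_G(\bm x)=X_G(\bm x;1)$ and Lemma \ref{lem:plethysmidentities}(3), I would convert the $p$-expansion to an $e$-expansion. Actually the slicker path mirrors the proof of Theorem \ref{thm:foresttriples} term by term: group NBC edge sets $\theta$ by the set partition $\mathcal A=\{A_1,\dots,A_m\}$ (with $\min A_1<\cdots<\min A_m$) of their connected components; the inner sum $\sum_{\pi(\theta)=\mathcal A}(-1)^{|\theta|}$ equals, after the $(q-1)\to 0$ analysis, $(-1)^{n-m}\cdot\#\{(T_1,\dots,T_m): T_j\ \text{NBC tree},\ V(T_j)=A_j\}$ — the edges between blocks contribute, at $q=1$, a net factor because the NBC condition on inter-block edges is automatically satisfiable and toggling them is sign-reversing, so only the "no inter-block edges" configuration survives. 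Then each block contributes $h_{|A_j|}$-type plethystic data; applying Lemma \ref{lem:plethysmidentities}(4) at $q=1$, i.e. $\tfrac{e_k[(q-1)\bm x]}{q-1}\big|_{q\to1}$ replaced by the identity $h_n=\sum_{\alpha\vDash n}(-1)^{n-\ell(\alpha)}e_{\mathrm{sort}(\alpha)}$ directly, attaches a composition $\alpha^{(j)}\vDash|A_j|$ with sign $(-1)^{\ell(\alpha^{(j)})-1}$ to each block. The integer $r_j$ with $1\le r_j\le\alpha^{(j)}_1$ is free and contributes a factor $\alpha^{(j)}_1$, which is absorbed since $[k]_1=k$ — this is exactly where the $r$-data of a tree triple enters without any $q$-weight. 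Assembling these pieces reproduces $\sum_{\mathcal F\in\mathrm{FT}(G)}\mathrm{sign}(\mathcal F)e_{\mathrm{type}(\mathcal F)}$, and independence from $\lessdot$ is automatic because the left-hand side $X_G(\bm x)$ manifestly does not depend on it.

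The main obstacle is the NBC analogue of Lemma \ref{lem:sumovertreelists} and, relatedly, making the "inter-block edges cancel" step rigorous: when $q\ne1$ one needs the full $q$-counting (which fails for general $G$, since tree lists and $\mathrm{inv}_G$ have no NBC substitute), but at $q=1$ one must verify that the alternating sum over inter-block NBC-compatible edge sets collapses correctly — i.e. that every broken circuit among inter-block edges still allows the inclusion-exclusion to telescope to the empty-inter-block term. I expect this to follow from the observation that if $\theta$ is NBC and $e$ is an inter-block edge such that $\theta\cup\{e\}$ creates no cycle (hence no broken circuit either), then $\theta\mapsto\theta\triangle\{e\}$ is a sign-reversing involution on non-extremal $\theta$, leaving only $\theta$ with no inter-block edges and no further freedom; care is needed because adding $e$ merges two blocks, so the involution must be set up on edge sets with a \emph{fixed} number $m$ of blocks by choosing $e$ to connect two vertices already in the same block — in short, the inter-block edges of an NBC set with prescribed component partition are exactly those lying inside blocks, and there are none "between" blocks by definition of the partition, so the cancellation is in fact vacuous and the only real content is the single-block count. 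Once that is pinned down, the rest is the same bookkeeping as in the proof of Theorem \ref{thm:foresttriples}.
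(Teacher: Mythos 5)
Your high-level strategy — start from Stanley's NBC theorem for $X_G(\bm x)$ and change basis from power sums to elementary symmetric functions — is the same one the paper uses, and it does work. However, your proposal contains a false auxiliary claim that the paper never needs. You assert that the number of NBC trees $T$ with $V(T)=A$ equals a product $\prod_{i\in A,\ i>\min(A)}b_i(A)$; for an arbitrary graph $G$ with an arbitrary edge ordering $\lessdot$ this is simply not true. For instance, take $G=C_4$ on $\{1,2,3,4\}$ with edges ordered lexicographically; the unique cycle has broken circuit $\{\{1,2\},\{1,4\},\{2,3\}\}$, so three of the four spanning trees are NBC, whereas the putative product $b_2\,b_3\,b_4=1\cdot1\cdot2=2$. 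The factorization in Lemma \ref{lem:sumovertreelists} is a special feature of natural unit interval graphs (equivalently, of chordal graphs with a perfect elimination ordering, where the chromatic polynomial splits linearly); it has no analogue in general. Crucially, you do not need any count of NBC trees: the paper's proof of Theorem \ref{thm:nbctriples} simply takes the form of Theorem \ref{thm:nbc} that is already a signed sum over tuples of NBC trees, expands each $p_{|V(T_i)|}$, and reads off forest triples term by term — there is nothing to enumerate and no analogue of Lemma \ref{lem:sumovertreelists} appears.

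A second, smaller slip: you say you would apply Lemma \ref{lem:plethysmidentities}(3), i.e.\ $h_n=\sum_{\alpha\vDash n}(-1)^{n-\ell(\alpha)}e_{\text{sort}(\alpha)}$, but Stanley's NBC theorem produces power sums, not complete homogeneous symmetric functions. The identity you actually need is Lemma \ref{lem:ptoe} (Mendes--Remmel), $p_n=\sum_{\alpha\vDash n}(-1)^{n-\ell(\alpha)}\alpha_1 e_{\text{sort}(\alpha)}$, whose extra factor $\alpha_1$ is exactly what produces the free choice $1\le r\le\alpha_1$ in a tree triple. You do account for that factor informally (``contributes a factor $\alpha_1^{(j)}$''), so the arithmetic in your head is right, but as written the cited identity would miss it. Similarly, the ``$(q-1)\to0$'' and plethysm framing and the discussion of cancelling inter-block edges are detours: since an NBC edge set is automatically a forest, summing over $\theta\subseteq E$ with $\pi(\theta)=\mathcal A$ is literally summing over tuples of NBC trees on the blocks of $\mathcal A$, with sign $(-1)^{n-m}$ and no leftover edges to cancel. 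Stripping the false lemma and the plethysm detour leaves exactly the paper's two-line argument: substitute \eqref{eq:ptoe} into \eqref{eq:nbc} and collect the data of $(T_i,\alpha^{(i)},r_i)$ into a forest triple.
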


\begin{figure}
\caption{\label{fig:claw}The claw graph $G$ and the chromatic symmetric function $X_G(\bm x)$}
$$\begin{tikzpicture}
\draw (2.5,0) node (0){$G=$};
\draw (3.5,0.866)--(4,0) (3.5,-0.866)--(4,0) (5,0)--(4,0);
\filldraw (3.5,0.866) circle (3pt) node[align=center,above] (1){1};
\filldraw (3.5,-0.866) circle (3pt) node[align=center,below] (2){2};
\filldraw (4,0) circle (3pt) node[align=center,below] (3){3};
\filldraw (5,0) circle (3pt)node[align=center,below] (4){4};
\end{tikzpicture}$$
\begin{equation}
X_G(\bm x)=e_{211}-2e_{22}+5e_{31}+4e_4
\end{equation}
\end{figure}
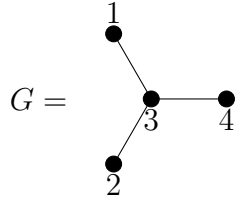

\begin{example}
Let us calculate the coefficient of $e_{22}$ in $X_G(\bm x)$ for the \emph{claw graph} $G$ in Figure \ref{fig:claw}. Because $G$ has no cycles, every subtree is an NBC tree. Note that we cannot break $G$ into two trees of size $2$, because both trees would need to include vertex $3$. Therefore, the only possibility is for $\mathcal F$ to consist of a single tree triple $\mathcal T=(T,\alpha,r)$, where $T$ is all of $G$, $\alpha=22$, and $1\leq r\leq 2$. We have $\text{sign}(\mathcal F)=(-1)^{2-1}=-1$, so these forest triples give us the term $-2e_{22}$. Note that $X_G(\bm x)$ is not $e$-positive.
\end{example}

To prove Theorem \ref{thm:nbctriples}, we use a power sum expansion of $X_G(\bm x)$ and then we perform a change-of-basis from power sum to elementary symmetric functions. The technique of changing bases is also employed in upcoming work of Sagan and the author \cite{chrombasis}. Stanley originally proved the following result using M\"obius inversion, and more recently a sign-reversing involution proof was given by Sagan and Vatter \cite[Section 3]{bijectivenbc}.

\begin{theorem}\label{thm:nbc} \cite[Theorem 2.9]{chromsym}
The chromatic symmetric function $X_G(\bm x)$ is given by
\begin{equation}\label{eq:nbc}
X_G(\bm x)=\sum_{\substack{T_1,\ldots,T_m\text{ NBC trees}\\V(T_1)\sqcup\cdots\sqcup V(T_m)=[n]}}(-1)^{n-m}p_{|V(T_1)|}\cdots p_{|V(T_m)|}.
\end{equation}
\end{theorem}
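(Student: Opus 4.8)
The plan is to derive Stanley's expansion from Whitney's power-sum formula for $X_G(\bm x)$ and then cancel all unwanted terms with an explicit sign-reversing involution on edge subsets, very much in the spirit of the sign-reversing involutions used elsewhere in this paper (and of the argument of Sagan and Vatter). First I would record Whitney's expansion: writing a colouring $\kappa$ as proper precisely when $\prod_{\{i,j\}\in E}\bigl(1-[\kappa(i)=\kappa(j)]\bigr)=1$, expanding the product and interchanging sums gives
\begin{equation}
X_G(\bm x)=\sum_{S\subseteq E}(-1)^{|S|}p_{\lambda(S)},
\end{equation}
where $\lambda(S)$ is the partition of $n$ recording the sizes of the connected components of the spanning subgraph $([n],S)$; indeed, for fixed $S$ the colourings that are constant on each component contribute $\prod_{\text{components }C}p_{|C|}=p_{\lambda(S)}$.

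Next I would identify the terms that should survive. An edge subset $S$ contains no broken circuit if and only if $S$ contains no cycle (any cycle contains the broken circuit obtained by deleting its $\lessdot$-largest edge) and no broken circuit, which is exactly the same as saying that $([n],S)$ is a disjoint union of NBC trees $T_1,\dots,T_m$ whose vertex sets partition $[n]$ (each component of a broken-circuit-free $S$ is itself broken-circuit-free, hence an NBC tree). For such $S$ we have $|S|=n-m$ and $p_{\lambda(S)}=p_{|V(T_1)|}\cdots p_{|V(T_m)|}$, so these terms are precisely the summands on the right-hand side of \eqref{eq:nbc}. It therefore suffices to show that the terms indexed by the edge subsets $S$ that \emph{do} contain a broken circuit cancel in pairs.

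To do this I would introduce, for $S\subseteq E$, the set $P(S)$ of edges $e$ such that $e=\max_\lessdot C$ for some cycle $C$ of $G$ with $C\setminus\{e\}\subseteq S$; note $P(S)\neq\emptyset$ exactly when $S$ contains a broken circuit. On these $S$ I define $\varphi(S)=S\mathbin{\triangle}\{e(S)\}$, where $e(S)=\max_\lessdot P(S)$. Since any witnessing cycle $C$ for $e(S)$ has $C\setminus\{e(S)\}\subseteq S$, which is a path joining the endpoints of $e(S)$, toggling $e(S)$ does not change the component structure, so $\varphi$ preserves $\lambda(S)$; and $|S|$ changes by one, so $\varphi$ is sign-reversing and has no fixed points. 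Moreover $C\setminus\{e(S)\}$ avoids $e(S)$, so it still lies in $\varphi(S)$ and $e(S)\in P(\varphi(S))$.

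The main obstacle, and the heart of the proof, is verifying that $\varphi$ is an involution, i.e. that $e(\varphi(S))=e(S)$: toggling the top edge of the $\lessdot$-largest pivotal cycle must not create a larger pivotal edge. Suppose some $e'>e(S)$ lay in $P(\varphi(S))$, witnessed by a cycle $C'$. If $e(S)\notin C'\setminus\{e'\}$, then $C'\setminus\{e'\}\subseteq S$, contradicting maximality of $e(S)$ in $P(S)$. Otherwise $e(S)\in C'$; adding $C$ and $C'$ in the cycle space over $\mathbb{F}_2$ cancels $e(S)$ (note $e(S)\in C$ as $e(S)=\max_\lessdot C$, and $e'\notin C$ since all edges of $C$ are $\lessdot e'$), and the cycle $D$ in the resulting decomposition containing $e'$ has all its edges in $(C\cup C')\setminus\{e(S)\}$, whose $\lessdot$-maximum is $e'$, so $\max_\lessdot D=e'$; also each edge of $D\setminus\{e'\}$ lies in $C\setminus\{e(S)\}\subseteq S$ or in $C'\setminus\{e(S),e'\}\subseteq S$, so $D\setminus\{e'\}\subseteq S$ and hence $e'\in P(S)$ with $e'>e(S)$, a contradiction. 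Thus $\varphi$ is a sign-reversing, $\lambda$-preserving, fixed-point-free involution on the edge subsets containing a broken circuit, those terms sum to zero, and only the NBC-spanning-forest terms remain, giving exactly \eqref{eq:nbc}; in particular the right-hand side does not depend on $\lessdot$.
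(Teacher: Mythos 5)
Your proposal is correct and complete. Note, however, that the paper does not prove Theorem \ref{thm:nbc} at all: it is quoted from Stanley, with the remark that Stanley's original argument uses M\"obius inversion and that Sagan and Vatter later gave a sign-reversing involution proof. What you have written is essentially that second, bijective proof: Whitney's expansion $X_G(\bm x)=\sum_{S\subseteq E}(-1)^{|S|}p_{\lambda(S)}$, the observation that the broken-circuit-free $S$ are exactly the spanning forests into NBC trees (with $|S|=n-m$), and a fixed-point-free, $\lambda$-preserving, sign-reversing involution on the remaining $S$ obtained by toggling the $\lessdot$-largest ``pivotal'' edge $e(S)=\max_\lessdot P(S)$. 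The one genuinely delicate point --- that toggling $e(S)$ cannot promote some $e'\gg e(S)$ into $P(\varphi(S))$ --- you handle correctly with the cycle-space argument: $e(S)\in C\cap C'$ and $e'\in C'\setminus C$ force $e'$ to lie in a cycle $D\subseteq (C\cup C')\setminus\{e(S)\}$ with $\max_\lessdot D=e'$ and $D\setminus\{e'\}\subseteq S$, contradicting the maximality of $e(S)$ in $P(S)$. Two cosmetic points: your characterization sentence (``no cycle \dots and no broken circuit'') is redundantly phrased, though the intended equivalence is right; and you should read the sum in \eqref{eq:nbc} as being over unordered collections of NBC trees (equivalently, tuples listed in the paper's canonical order by minima), so that each broken-circuit-free $S$ contributes exactly one term. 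With those readings, your argument establishes the theorem and, as you note, the independence from $\lessdot$ comes for free since the left-hand side does not involve it.
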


Mendes and Remmel proved the following change-of-basis formula by showing that both sides satisfy the same recurrence.

\begin{lemma} \label{lem:ptoe} \cite[Theorem 2.22]{countingsym} We have the identity
\begin{equation}\label{eq:ptoe}
p_n=\sum_{\alpha\vDash n}(-1)^{n-\ell(\alpha)}\alpha_1e_{\text{sort}(\alpha)}.
\end{equation}
\end{lemma}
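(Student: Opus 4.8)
The plan is to prove \eqref{eq:ptoe} in the way Mendes and Remmel did, namely by showing that its right-hand side satisfies the same Newton-type recurrence as $p_n$. Write $f_n=\sum_{\alpha\vDash n}(-1)^{n-\ell(\alpha)}\alpha_1e_{\text{sort}(\alpha)}$ for the right-hand side; the goal is $f_n=p_n$ for all $n\geq 1$. First I would record Newton's identity in a convenient form. From $E(t):=\prod_i(1+x_it)=\sum_{m\geq 0}e_mt^m$ one has
\begin{equation}
\frac{d}{dt}\log E(t)=\sum_i\frac{x_i}{1+x_it}=\sum_{k\geq 0}(-1)^kp_{k+1}t^k,
\end{equation}
so $E'(t)=E(t)\sum_{k\geq 0}(-1)^kp_{k+1}t^k$; comparing coefficients of $t^{n-1}$ on both sides and isolating the summand $p_n$ yields, for every $n\geq 1$,
\begin{equation}\label{eq:newtonrec}
p_n=(-1)^{n-1}n\,e_n+\sum_{j=1}^{n-1}(-1)^{n-1-j}e_{n-j}\,p_j.
\end{equation}
Since \eqref{eq:newtonrec} expresses $p_n$ as a polynomial in $e_n,\ldots,e_1$ and $p_1,\ldots,p_{n-1}$, and since $p_1=e_1$, it determines each $p_n$ uniquely.

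Next I would check that $(f_n)$ satisfies the very same recurrence. The key observation is that for $1\leq j\leq n-1$ and $\alpha\vDash j$, prepending the part $n-j$ produces a composition $\beta=(n-j)\cdot\alpha\vDash n$ with $\ell(\beta)=\ell(\alpha)+1\geq 2$ and $e_{n-j}e_{\text{sort}(\alpha)}=e_{\text{sort}(\beta)}$, and that $(j,\alpha)\mapsto\beta$ is a bijection onto $\{\beta\vDash n:\ell(\beta)\geq 2\}$; moreover the sign works out as $(-1)^{n-1-j}(-1)^{j-\ell(\alpha)}=(-1)^{n-\ell(\beta)}$ and the recorded coefficient $\alpha_1$ becomes $\beta_2$. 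Hence
\begin{equation}
\sum_{j=1}^{n-1}(-1)^{n-1-j}e_{n-j}f_j=\sum_{\substack{\beta\vDash n\\\ell(\beta)\geq 2}}(-1)^{n-\ell(\beta)}\beta_2\,e_{\text{sort}(\beta)}.
\end{equation}
Since the length-one part of $f_n$ is exactly $(-1)^{n-1}n\,e_n$ (coming from $\beta=(n)$), adding $(-1)^{n-1}n\,e_n$ and subtracting $f_n$ gives
\begin{equation}
(-1)^{n-1}n\,e_n+\sum_{j=1}^{n-1}(-1)^{n-1-j}e_{n-j}f_j-f_n=\sum_{\substack{\beta\vDash n\\\ell(\beta)\geq 2}}(-1)^{n-\ell(\beta)}(\beta_2-\beta_1)\,e_{\text{sort}(\beta)}.
\end{equation}
For each partition $\lambda\vdash n$ with $\ell(\lambda)\geq 2$, transposing the first two entries is an involution on the set of compositions $\beta$ with $\text{sort}(\beta)=\lambda$ that preserves $\ell(\beta)$ and $e_{\text{sort}(\beta)}$ while swapping $\beta_1$ and $\beta_2$; pairing $\beta$ with its image (a fixed point has $\beta_1=\beta_2$ and contributes $0$), the sum over each such $\lambda$ cancels, so the right-hand side above vanishes. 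Therefore $f_n$ satisfies \eqref{eq:newtonrec}.

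Finally, $f_1=e_1=p_1$, so by \eqref{eq:newtonrec} and induction on $n$ we conclude $f_n=p_n$ for all $n$, which is \eqref{eq:ptoe}. The only delicate point is the sign bookkeeping: one must first put Newton's identity in the form \eqref{eq:newtonrec} with the factor $(-1)^{n-1-j}$, precisely so that prepending a single part to a composition makes the signs match up; once that is done, the cancellation from swapping $\beta_1$ and $\beta_2$ is immediate. I expect this sign arithmetic, rather than any conceptual difficulty, to be the main thing to get right.
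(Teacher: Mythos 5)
Your proof is correct: the derivation of Newton's identity in the form $p_n=(-1)^{n-1}n\,e_n+\sum_{j=1}^{n-1}(-1)^{n-1-j}e_{n-j}p_j$ is right, the prepending bijection and sign bookkeeping check out, and the swap of $\beta_1$ and $\beta_2$ does make the residual sum vanish. The paper does not prove this lemma itself but only cites Mendes and Remmel, noting that they establish it by showing both sides satisfy the same recurrence, which is exactly the argument you have carried out in full.
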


\begin{proof}[Proof of Theorem \ref{thm:nbctriples}. ] By Theorem \ref{thm:nbc} and Lemma \ref{lem:ptoe}, we have 
\begin{align}
X_G(\bm x)&=\sum_{\substack{T_1,\ldots,T_m\text{ NBC trees}\\V(T_1)\sqcup\cdots\sqcup V(T_m)=[n]}}(-1)^{n-m}p_{|V(T_1)|}\cdots p_{|V(T_m)|}\\\nonumber&=\sum_{\substack{T_1,\ldots,T_m\text{ NBC trees}\\V(T_1)\sqcup\cdots\sqcup V(T_m)=[n]}}(-1)^{n-m}\prod_{i=1}^m\sum_{\alpha^{(i)}\vDash |V(T_i)|}(-1)^{|V(T_i)|-\ell(\alpha^{(i)})}\alpha^{(i)}_1e_{\text{sort}(\alpha^{(i)})}\\\nonumber&=\sum_{\substack{T_1,\ldots,T_m\text{ NBC trees}\\V(T_1)\sqcup\cdots\sqcup V(T_m)=[n]\\\alpha^{(1)}\vDash |V(T_1)|,\ldots,\alpha^{(m)}\vDash|V(T_m)|\\1\leq r_1\leq\alpha^{(1)}_1,\ldots,1\leq r_m\leq\alpha^{(m)}_1}}(-1)^{n-m+\sum_{i=1}^m(|V(T_i)|-\ell(\alpha^{(i)}))}e_{\text{sort}(\alpha^{(1)}\cdots\alpha^{(m)})}\\\nonumber&=\sum_{\mathcal F\in\text{FT}(G)}\text{sign}(\mathcal F)e_{\text{type}(\mathcal F)}.
\end{align}\end{proof}

Therefore, even for arbitrary graphs $G$, we can study $e$-positivity of the chromatic symmetric function $X_G(\bm x)$ by fixing a total ordering $\lessdot$ and seeking a sign-reversing involution on forest triples of $G$.

\section{Acknowledgments}
The author would like to thank Per Alexandersson, Alexander Postnikov, Bruce Sagan, Richard Stanley, Aarush Vailaya, and Stephanie van Willigenburg for helpful discussions.

\printbibliography

\end{document}